\def\titlerunning#1{\gdef\titrun{#1}}
\def\author#1{\gdef\autrun{\def\and{\unskip, }#1}\gdef\@author{#1}}
\def\address#1{{\def\and{\\\hspace*{18pt}}\renewcommand{\thefootnote}{}%
\footnote {#1}}%
\markboth{\autrun}{\titrun}}
\def\email#1{e-mail: #1}
\def\subjclass#1{{\renewcommand{\thefootnote}{}%
\footnote{\emph{Mathematics Subject Classification (2010):} #1}}}
\def\keywords#1{\par\medskip
\noindent\textbf{Keywords.} #1}
\numberwithin{equation}{section}
\newtheorem{theorem}{Theorem}[section]
\newtheorem{proposition}[theorem]{Proposition}
\newtheorem{lemma}[theorem]{Lemma}
\newtheorem{corollary}[theorem]{Corollary}
\theoremstyle{definition}\newtheorem{remark}[theorem]{Remark}}%JEMS
\newcommand{\sezione}[1]{\section{#1}\setcounter{equation}{0}}
\newcommand{\sottosezione}[1]{\subsection{#1}}
\newcommand{\nor}{\Arrowvert}
\newcommand{\rosso}{\color{black}}
\def\R{{\rm I\mskip -3.5mu R}}
\def\N{{\rm I\mskip -3.5mu N}}
\def\Om{\Omega}
\def\C2al{C^{2,\a}_{loc}}%\left( \bar \Om \setminus \{k_1,\dots,k_m\}\right)}
\def\intO{\int_{\Om}}
\def\ra{{\longrightarrow\,}}
\def\na{\nabla}
\def\d{\delta}
\def\l{{\lambda}}
\def\a{{\alpha}}
\def\S{{\mathcal{S}}}
\def\un{u_n}
\def\tv{\tilde v}
\def\tu{\tilde u}
\def\tx{\tilde x}
\def\c{\mathfrak{c}}
\def\C{\mathfrak{C}}
\def\ln{\l_n}
\def\vn{v_n}
\def\mn{\mu_n}
\def\de{\partial}
\def\ka{\kappa}
\def\a{\mathfrak{a}}
\def\b{\mathfrak{b}}
\def\A{\mathfrak{A}}
\newcommand{\vep}{\varepsilon}
\newcommand{\lap}{\Delta}
\newcommand{\del}{\partial}
\newcommand{\tend}{\longrightarrow}
\newcommand{\bs}[1]{\boldsymbol{#1}}
\newcommand{\Le}[2]{L^{#1}\left(#2\right)}
\newcommand{\Co}[2]{C^{#1}\left(#2\right)}
\newcommand{\clo}[1]{\overline{#1}}
\newcommand{\norm}[2]{\|#1\|_{#2}}
\newcommand{\ball}[2]{B_{#1}\left(#2\right)}
\newcommand{\til}[1]{\widetilde{#1}}
\begin{document}

%%%%% To ease editing, add:

\baselineskip=17pt

%%%%%%%%%%%%%%%%

%% In the running head, give an abbreviation of the title. 
\titlerunning{Morse indices of multiple blow-up solutions to the Gel'fand problem}

\title{Morse indices of multiple blow-up solutions \\
 to the two-dimensional Gel'fand problem}

\author{
Francesca Gladiali
\and 
Massimo Grossi
\and 
Hiroshi Ohtsuka
\and 
Takashi Suzuki}

\date{October 3, 2012}

\maketitle

\address{
F. Gladiali: Universit\`a degli Studi di
Sassari,via Piandanna 4 -07100 Sassari, Italy; \\ \email{fgladiali@uniss.it}
\and
M. Grossi: Dipartimento di Matematica, Universit\`a di Roma
``La Sapienza", P.le A. Moro 2 - 00185 Roma, Italy; \email{grossi@mat.uniroma1.it}
\and 
H. Ohtsuka (Corresponding author): Department of Applied Physics,
Faculty of Engineering,
University of Miyazaki,
Gakuen Kibanadai Nishi 1-1,
Miyazakishi, 889-2192, Japan; \email{ohtsuka@cc.miyazaki-u.ac.jp}
\and
T. Suzuki: Division of Mathematical Science,
Department of System Innovation,
Graduate School of Engineering Science,
Osaka University,
Machikaneyamacho 1-3,
Toyonakashi 560-8531, Japan; \\ \email{suzuki@sigmath.es.osaka-u.ac.jp}
}

\subjclass{Primary 35P30; Secondary 35B40}

%%%%%%%%

\begin{abstract}
Blow-up solutions to the two-dimensional Gel'fand problem are
studied. It is known that the location of  the blow-up points of
these solutions is related to a Hamiltonian function involving the Green function of the domain. We show that this implies an equivalence 
between the Morse indices of the solutions and the associated criticalpoints of the Hamiltonian. 

%% Keywords are optional
\keywords{Gel'fand problem, Hamiltonian, Morse index}
\end{abstract}

\sezione{Introduction}
\label{s1}
The purpose of the present paper is to study the Gel'fand problem
\begin{equation}
\label{1}
            \left\{\begin{array}{lc}
                        -\Delta u=\l e^{u}  &
            \mbox{  in }\Om\\
              u=0 & \mbox{ on }\partial \Om,
                      \end{array}
                \right.
\end{equation}
where $\Om\subset \R^2$ is a bounded domain with smooth boundary
$\de \Om$ and $\l>0$ is a parameter.  This problem is associated
with several phenomena in differential geometry, turbulence
theory, and gauge field theory (see \cite{suzuki08} and the
references therein.)

Let ${\cal C}=\{
(\lambda,u)\in\R^+\times C(\Omega) \mid (\ref{1})\hbox{ is
satisfied}\}$ be the solutions set. The first observation is that
${\cal C}=\emptyset$ for $\lambda$ large enough.  The next one is
 that ${\cal C}\cap \{ \lambda\geq \varepsilon\}$ is
compact in ${\bf R}\times C(\overline{\Omega})$ for any
$\varepsilon>0$ and then there are at least two solutions for each $0<\lambda \ll 1$ (see {\cite{cr75}} for this fact and also \cite{CL01} for more detailed construction of the solutions). 
%A  result (by \cite{CL01}) concerning
%the existence of the solution  states that there exist at least
%two solutions for each $0<\lambda \ll 1$.  

The structure of
${\cal C}$, however, is much richer according to the topological
and geometrical properties of the domain $\Omega$ (see
\cite{EGP05} ,\cite{DKM05} and \cite{s92}), which provides
significant effects to the above mentioned geometric and physical
theories. Critical phenomena in fact occur to the solution
$u=u(x)$ as $\lambda\downarrow 0$. This profile is described by
\cite{NS90} as a quantized blow-up mechanism.

Let $\{\ln\}_{n\in\N}$ be a sequence of positive values such that $\ln\to 0$ as $n\to \infty$ and let $\un=\un(x)$ be a sequence of solutions of \eqref{1} for $\l=\ln$. In \cite{NS90}, the authors proved the total mass quantization, that is,
\begin{equation}\label{2.1}
\ln \intO e^{\un}\, dx\rightarrow 8\pi m
\end{equation}
for some $m=0,1,2, \cdots, +\infty$ along a sub-sequence.\\
If $m=0$ the pair $({\lambda_n},u_{\lambda_n} )\in{\cal C}$
converges to
$(0,0)$ as ${\lambda_n}\rightarrow0$.\\
If $m=+\infty$ there arises the entire blow-up of the solution
$u_n$, in the sense that
$\inf_Ku_n\rightarrow+\infty$ for any $K\Subset \Omega$. \\
If $0<m<\infty$ the solutions $\{ u_{n}\}$ blow-up at $m$-points.
Thus there is a set ${\cal S}=\{ \kappa_1, \cdots,
\kappa_m\}\subset \Omega$ of $m$ distinct points such
that  $\nor \un \nor_{L^{\infty}(\omega)}=O(1)$ for any
$\omega\Subset \overline{ \Omega}\setminus \S$,
$$\un{|_{\S}}\rightarrow +\infty \quad \hbox{ as }n\to \infty, $$
and
\begin{equation}\label{2.2}
\un \rightarrow \sum_{j=1}^m 8\pi \,G(\cdot, \ka_j)\quad \hbox{ in }C^{2}_{loc}(\overline{ \Omega} \setminus \S).
\end{equation}
Here and henceforth, $G(x,y)$ denotes the Green function of
$-\Delta$ in $\Om$ with Dirichlet boundary condition.  The Robin
function $R(x)=K(x,x)$ is now defined by the regular part of
$G(x,y)$ denoted by $K=K(x,y)$, i.e.,
\begin{equation}
\label{2.3}
G(x,y)=\frac 1{2\pi}\log{|x-y|^{-1}}+K(x,y).
\end{equation}
Then the blow-up points satisfy,
\begin{equation}
\na H^m (\ka_1,\dots ,\ka_m)=0
\label{conditionS}
\end{equation}
where
$$H^m(x_1,\dots, x_m)=\frac 12 \sum_{j=1}^m R(x_j)+\frac 12 \sum_{\substack   {1\leq j,h\leq m\\ j\neq h}}G(x_j,x_h).$$
See also \cite{MW01} for relating facts.

%From the viewpoint of the elliptic theory, (\ref{2.2}) with (\ref{conditionS}) classify the singular limits. This
%classification is sharp in the sense that
If the critical point
$(\kappa_1, \dots, \kappa_m)$ of $H^m$ is non-degenerate, then it
generates a family of solutions $\{ u_\lambda\}_\lambda$ to
(\ref{1}) satisfying (\ref{2.1}) as $\lambda\downarrow 0$ (see
\cite{bp98}). {\rosso{Moreover}} the non-degeneracy of
$(\kappa_1, \dots, \kappa_m)$ implies that of $u_\lambda$ for
$0<\lambda \ll 1$.  This was
proven first for $m=1$ by \cite{gg04} and then by \cite{GOS11} for
the general case.  
The purpose of this paper is to know more about this correspondence between the solution $u_\lambda$ and the associated critical point $(\kappa_1, \dots, \kappa_m)$ of $H^m$ up to the Morse indices of the both (see Remark \ref{rem_turbulence} and Remark \ref{rem_bifurcation} for a more detailed motivation).

To state our results, we take a sequence of solutions $\{ \un\} $
to (\ref{1}) for $\lambda=\lambda_n$ satisfying
$\lambda_n\downarrow 0$ and \eqref{2.1}. Since $m=1$ was studied
in \cite{GG09} we shall assume $m\geq 2$ in the sequel. Then we
consider the eigenvalue problem
\begin{equation}\label{autov-n}
\left\{\begin{array}{lc}
                        -\Delta \vn^k= \mn^k\ln e^{\un}\vn^k  & \mbox{ in }\Om\\
\nor \vn^k\nor_{\infty}=\max_{\overline{ \Om}}\vn^k=1 &\\
                  \vn^k=0 & \mbox{ on }\partial \Om
                      \end{array}
                \right.
\end{equation}
%\begin{equation}
%\label{3}
%\left\{\begin{array}{lc}
%                        -\Delta v= \mu\ln e^{\un}v  & \mbox{ in }\Om\\
%                  v=0 & \mbox{ on }\partial \Om
%                      \end{array}
%                \right.
%\end{equation}
which admits a sequence of eigenvalues $\mn^1<\mn^2\leq\mn^3\leq  \dots$, where $\vn^k$ is the $k$-th eigenfunction of \eqref{autov-n} corresponding to the eigenvalue $\mn^k$.  %We may assume that
The Morse and augmented Morse index of $u_n$, denoted by
$\mathrm{ind}_M(u_n)$ and $\mathrm{ind}_M^\ast(u_n)$,
respectively, are defined by
$$
\mathrm{ind}_M(u_n)=\#\{k\in\N\,;\,\mn^k<1\},\quad \mathrm{ind}_M^\ast(u_n)=\#\{k\in\N\,;\,\mn^k\leq 1\}.
$$
Given a $C^2$-function $f$ of 2m-variables
\[ (x_1,\cdots, x_m)=(x_{1,1},x_{1,2},\cdots, x_{m,1},x_{m,2})\in \R^{2m}, \]
and its critical point $(\kappa_1, \cdots, \kappa_m)\in \R^{2m}$,
the Morse and augmented Morse index of $f$ at
$(\ka_1,\cdots,\ka_m)$ are denoted by $\mathrm{ind}_M
f(\ka_1,\cdots,\ka_m)$ and $\mathrm{ind}_M^\ast
f(\ka_1,\cdots,\ka_m)$, that is,
\begin{align*}
&\mathrm{ind}_M f(\ka_1,\cdots,\ka_m)=\#\{k\in\N\,;\,\Lambda^k<0\},\\
&\mathrm{ind}_M^\ast f(\ka_1,\cdots,\ka_m)=\#\{k\in\N\,;\,\Lambda^k\leq 0\},
\end{align*}
where $\Lambda^1\leq\Lambda^2\leq \cdots\leq \Lambda^{2m}$ are the
eigenvalues of the Hessian matrix
$\mathrm{Hess}f=\left(\frac{\de^2f}{\de x_{i,\alpha}\de
x_{j,\beta}}\right)$ at $(\ka_1,\cdots,\ka_m)$, with $i,j=1,..,m$
and $\alpha,\beta=1,2$.

\bigskip

Under these notations we can state our main result.
\begin{theorem}\label{t1}
Suppose that $\{u_n\}$ is a sequence of  solutions to \eqref{1} which blows-up at $\ka_1,\cdots,\ka_m\in\Omega$. Then its Morse index $\mathrm{ind}_M(u_n)$ and the augmented Morse index $\mathrm{ind}_M^\ast(u_n)$ satisfy the following estimates for $n$ large,
\begin{align}
&m+\mathrm{ind}_M\{-H^m(\ka_1,\cdots,\ka_m)\}\leq \mathrm{ind}_M(u_n), \label{eqn:t1}\\
&\mathrm{ind}^\ast_M(u_n)\leq m+\mathrm{ind}^\ast_M \{-H^m(\ka_1,\cdots,\ka_m)\}.
 \label{eqn:t2}
\end{align}
If $(\ka_1,\cdots,\ka_m)$ is a non-degenerate critical point of
$H^m$, it holds that $\mathrm{ind}_M
H^m(\ka_1,\cdots,\ka_m)=\mathrm{ind}^\ast_M
H^m(\ka_1,\cdots,\ka_m)$, and hence
$$
\mathrm{ind}_M(u_n)%=\mathrm{ind}^\ast_M(u_n)
=m+\mathrm{ind}_M\{-H^m(\ka_1,\cdots,\ka_m)\}.
$$
\end{theorem}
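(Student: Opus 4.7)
The plan is a blow-up analysis of the eigenvalue problem \eqref{autov-n} at each concentration point, followed by a two-sided asymptotic count of the eigenvalues $\mn^k \le 1$. For each $j = 1, \dots, m$ set $\d_j := (\ln e^{\un(\ka_j)})^{-1/2}$; the rescaled solution $\tu_n(y) := \un(\ka_j + \d_j y) - \un(\ka_j)$ converges in $C^2_{\mathrm{loc}}(\R^2)$ to the standard bubble $U(y) = \log\frac{8}{(1+|y|^2)^2}$, and the rescaled eigenfunction $\tv_n^k(y) := \vn^k(\ka_j + \d_j y)$ converges (along a subsequence) to a bounded $v^{(j)}$ solving $-\lap v^{(j)} = \mu_\ast e^U v^{(j)}$ on $\R^2$. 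Via stereographic projection to $S^2$ the bounded spectrum of this operator equals $\{k(k+1)/2 : k \ge 0\} = \{0, 1, 3, 6, \dots\}$; the eigenspace at $\mu_\ast = 0$ consists of constants, and the one at $\mu_\ast = 1$ is spanned by $\del_1 U$, $\del_2 U$ and the dilation mode $W(y) := \frac{|y|^2 - 1}{|y|^2 + 1}$. In particular, any accumulation $\mu_\ast \le 1$ of $\mn^k$ lies in $\{0, 1\}$.

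To establish the lower bound \eqref{eqn:t1}, I produce $m + \mathrm{ind}_M\{-H^m\}$ independent directions of negativity for the quadratic form $Q_n(\vphi) := \intO(|\na\vphi|^2 - \ln e^{\un}\vphi^2)\,dx$. A first family consists of $m$ logarithmic cut-offs localized near each $\ka_j$, which equal $1$ on the bubble and vanish at $\de\Om$: their Rayleigh quotients tend to $0$, mirroring the $\mu_\ast = 0$ limit at each blow-up, and their disjoint supports render them asymptotically $L^2$-orthogonal. A second family is built from translations: for each eigenvector $\bs{\xi} = (\xi_1, \dots, \xi_m) \in \R^{2m}$ of $\mathrm{Hess}\,H^m(\ka_1, \dots, \ka_m)$ with \emph{positive} eigenvalue $\L$, I set $\psi_{n,\bs{\xi}}(x) := \sum_{j=1}^m \eta_j(x)\,\xi_j \cdot \na\un(x)$ with $\eta_j$ supported near $\ka_j$, and a direct computation exploiting \eqref{2.2}, the critical-point condition \eqref{conditionS} and the identity $-\lap(\xi_j \cdot \na\un) = \ln e^{\un}(\xi_j \cdot \na\un)$ yields
\begin{equation*}
Q_n(\psi_{n,\bs{\xi}}) = -8\pi\,\L\,|\bs{\xi}|^2\,\vep_n + o(\vep_n),
\end{equation*}
for a positive bubble-scale quantity $\vep_n \to 0$. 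Parity under $y \mapsto -y$ of $\na U$ separates these translation modes from the first family, and summing delivers the claimed count.

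For the upper bound \eqref{eqn:t2} assume, toward a contradiction, that $\mn^N \le 1$ with $N := m + \mathrm{ind}_M^\ast\{-H^m\} + 1$. Passing to a common subsequence and applying the rescaling to $\vn^1, \dots, \vn^N$ simultaneously, each limit at $\ka_j$ is encoded by a constant $d_j^k$ (the $\mu_\ast = 0$ part) together with translation coefficients $\xi_j^k \in \R^2$ and a dilation coefficient $c_j^k$ (the $\mu_\ast = 1$ part). Matching the inner limits to the outer limit of $\vn^k$, which is approximately harmonic on $\Om \setminus \S$ with zero Dirichlet trace, supplies one linear constraint per blow-up point that suppresses the dilation coefficients $c_j^k$ and yields the effective eigenvalue equation
\begin{equation*}
\mathrm{Hess}\,H^m(\ka_1, \dots, \ka_m)\,\bs{\xi}^k = -\frac{\mn^k - 1}{\vep_n}\,\bs{\xi}^k + o(1)
\end{equation*}
for the aggregated translation vector $\bs{\xi}^k := (\xi_1^k, \dots, \xi_m^k) \in \R^{2m}$. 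Hence every $\mu_\ast = 1$ limit with $\mn^k \le 1$ produces an eigenvector of $-\mathrm{Hess}\,H^m$ of eigenvalue $\le 0$; combined with the $m$ modes at $\mu_\ast = 0$ this accounts for at most $N - 1$ such eigenvalues, contradicting the choice of $N$. When $(\ka_1, \dots, \ka_m)$ is non-degenerate, $\mathrm{Hess}\,H^m$ has no zero eigenvalue, so $\mathrm{ind}_M\,H^m = \mathrm{ind}_M^\ast\,H^m$ and the two inequalities force the asserted equality.

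The main technical obstacle is the rigorous derivation of the displayed effective Hessian equation: it demands a Lyapunov-Schmidt (or matched-asymptotic) reduction that couples the inner bubble with the outer Green-function representation $\un \to 8\pi\sum_j G(\cdot,\ka_j)$, an expansion of $\mn^k - 1$ precise enough to detect the $2m \times 2m$ matrix $\mathrm{Hess}\,H^m$, uniform Dirichlet control of the normalized $\vn^k$, and a sharp separation of translation modes from the dilation modes (the latter being annihilated by the Dirichlet trace condition). Once this expansion is available, the counting described above delivers both inequalities, and the non-degenerate case follows by sandwiching.
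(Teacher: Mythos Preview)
Your overall architecture is close to the paper's: rescale, identify the limit linearized Liouville operator on $\R^2$ with spectrum $\{k(k+1)/2\}$, show the first $m$ eigenvalues go to $0$, and relate the next $2m$ eigenvalues to $\mathrm{Hess}\,H^m$ via test functions $\xi_j\cdot\nabla u_n$. But there is a genuine gap in the treatment of the dilation modes, and it is precisely the step you flag as the ``main technical obstacle.''

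You claim that matching the inner limit to the outer harmonic limit with Dirichlet trace ``supplies one linear constraint per blow-up point that suppresses the dilation coefficients $c_j^k$.'' This is not the mechanism, and as stated it does not work: the rescaled eigenfunction only converges in $C^2_{\mathrm{loc}}$, the mode $W$ tends to a nonzero constant at infinity, and the far field of the bubble is matched against the Green-function profile, not directly against $\partial\Omega$. In the paper the dilation coefficients are \emph{not} forced to zero by any matching; instead one proves (via a bilinear Pohozaev identity applied to $u_n$ and $v_n^k$ on $B_R(x_{j,n})$, Propositions~\ref{newp3.14} and~\ref{newp3.12}) that if some $c_j^k\neq 0$ then
\[
\mu_n^k \;=\; 1-\frac{3}{2}\,\frac{1}{\log\lambda_n}+o\!\left(\frac{1}{\log\lambda_n}\right)\;>\;1,
\]
whereas the translation modes satisfy $\mu_n^k=1-48\pi\,\eta\,\lambda_n+o(\lambda_n)$ with $\eta$ an eigenvalue of $D(\mathrm{Hess}\,H^m)D$ (Proposition~\ref{newp3.16}). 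The two scales $1/|\log\lambda_n|\gg\lambda_n$ are what separate dilation from translation, and this scale separation---not a Dirichlet constraint---is what puts the dilation modes strictly above $1$. Your sketch does not contain this two-scale computation, and without it the upper bound \eqref{eqn:t2} is unproved.

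Two smaller points: (i) the effective matrix is $D(\mathrm{Hess}\,H^m)D$ with $D=\mathrm{diag}(d_1,d_1,\dots,d_m,d_m)$ coming from the possibly distinct blow-up rates $\delta_{j,n}\sim d_j\lambda_n^{1/2}$, not $\mathrm{Hess}\,H^m$ itself; one then needs Lemma~\ref{sign_eigenvalue} to transfer the sign count. (ii) Your lower bound argument via $Q_n(\psi_{n,\bm\xi})$ is plausible but the paper instead proves $\mu_n^k\le 1+O(\lambda_n)$ for $m+1\le k\le 3m$ by the variational characterization and then invokes the asymptotic expansion; either route requires the same careful boundary and cut-off estimates that you have not supplied.
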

%\begin{remark}
%We mean that $u_n$ is non-degenerate if the linearized eigenvalue
%problem \eqref{autov-n} around $u_n$ does not admit any eigenvalue equal
%to $1$. This non-degeneracy property was proven in \cite{GOS11}.
%\end{remark}
%\begin{remark}
From the proof of Theorem \ref{t1} described above, we always have
$m\leq \mathrm{ind}_M(u_n)\leq \mathrm{ind}^\ast_M(u_n)\leq 3m$. A
direct proof of the first inequality, $m\leq \mathrm{ind}_M(u_n)$,
is given in \cite{ft}.\\[.5cm]
%\end{remark}
The previous result is a consequence of a delicate asymptotic
expansion of the first $3m+1$ eigenvalues. This result, contained
in the next theorem, %according to us
is interesting in itself.
\begin{theorem}\label{t1a}
We have that, for $\lambda_n\rightarrow0$,
\begin{equation}\label{10}
 \mn^k=-\frac 12 \frac 1{\log \ln}+o\left(\frac 1{\log \ln}\right)\quad\text{for $1\leq k\leq
 m$},
\end{equation}
\begin{equation}\label{10_1}
 \mn^{k}=1-48\pi\eta^{2m-(k-m)+1}\ln+o\left(\ln\right)\quad\text{for $m+1\leq k\leq 3m$},
\end{equation}
and
\begin{equation}\label{10_2}
\mn^k>1\quad\text{for $k\geq 3m+1$}
\end{equation}
where $\eta^k$\,$(k=1,\cdots,2m)$ is the $k$-th eigenvalue of the
matrix $D(\mathrm{Hess}H^m)D$ at $(\ka_1,\cdots,\ka_m)$. Here $D=(D_{ij})$ is the diagonal matrix $\mathrm{diag}[d_1,d_1,d_2,d_2,\cdots,d_m,d_m]$
 (see \eqref{2.6b} for
the definition of the constants $d_j$). % and $\mathrm{Hess}(DH^mD)$ denotes the Hessian matrix of $DH^mD$.
\end{theorem}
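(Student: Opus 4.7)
The plan is to combine a blow-up analysis of the rescaled eigenfunctions at each of the $m$ concentration points with an outer Green-function representation, and then to sort the first $3m+1$ eigenvalues according to which limit profile arises. Around each $\ka_j$ fix a local maximum $x_j^n\to\ka_j$ of $\un$ and set the inner scale $\delta_j = d_j\,\ln^{-1/2}\,e^{-\un(x_j^n)/2}$ from \eqref{2.6b}. The rescaling $\tu_j(y)=\un(x_j^n+\delta_j y)-\un(x_j^n)$ converges in $C^2_{\mathrm{loc}}(\R^2)$ to the standard Liouville bubble $U(y)=\log\frac{8}{(1+|y|^2)^2}$, and the rescaling $\tv_j^k(y)=\vn^k(x_j^n+\delta_j y)$, bounded by $1$ in sup norm, solves $-\Delta\tv_j^k=\mn^k e^{\tu_j}\tv_j^k$ on an exhausting family of balls. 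Whenever $\mn^k\to\mu_\infty\in\R$ along a subsequence, $\tv_j^k$ converges to a bounded $V^j$ on $\R^2$ with $-\Delta V^j=\mu_\infty e^U V^j$. The standard classification of bounded solutions (cf.\ \cite{gg04,GOS11}) forces either $\mu_\infty=0$ with $V^j$ constant, or $\mu_\infty=1$ with $V^j$ in the three-dimensional kernel spanned by the dilation $\phi_0(y)=\frac{|y|^2-1}{|y|^2+1}$ and the translations $\phi_\alpha(y)=\frac{y_\alpha}{1+|y|^2}$, $\alpha=1,2$, or $\mu_\infty$ in a discrete sequence strictly above $1$.

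For the eigenvalues in the case $\mu_\infty=0$, the limit profile is a constant $c_j$ on each bubble, providing $m$ independent directions. To derive \eqref{10} I would combine the Green representation $\vn^k(x)=\mn^k\ln\intO G(x,y)\,e^{\un(y)}\,\vn^k(y)\,dy$ with the mass concentration $\ln e^{\un}\rightharpoonup 8\pi\sum_{j=1}^m\delta_{\ka_j}$ and the inner limit $\tv_j^k\to c_j$ to obtain $\vn^k(x)=8\pi\mn^k\sum_{h=1}^m c_h\,G(x,\ka_h)+o(\mn^k)$ uniformly on compacts of $\overline{\Om}\setminus\mathcal{S}$. A refined inner expansion of the form $\tv_j^k(y)=c_j-2\mn^k c_j\log(1+|y|^2)+O(\mn^k)$ follows from the equation via the identity $-\Delta[\log(1+|y|^2)]=-\tfrac12 e^U$. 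Matching both expansions at an intermediate scale $|x-\ka_j|\sim\sqrt{\delta_j}$, together with $\log\delta_j=\tfrac12\log\ln+O(1)$, yields the scalar relation $c_j(1+2\mn^k\log\ln)=o(\mn^k)$, which gives \eqref{10} at leading order for every admissible choice of $\bs c=(c_1,\dots,c_m)$.

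For the eigenvalues in the case $\mu_\infty=1$, the $m$ dilation directions $\phi_0$ share the non-decaying-at-infinity behaviour of the constant modes, and an argument parallel to Step~2 but linearised around $\mu=1$ produces $m$ eigenvalues with $\mn^k>1$ at rate $1/\log\ln$, contributing to \eqref{10_2}. The remaining $2m$ translation-type eigenfunctions admit the ansatz
\[
\vn^k(x)\approx \sum_{j=1}^m\sum_{\alpha=1}^2 c_j^\alpha\,\delta_j\,\partial_{y_\alpha}\tu_j\!\left(\tfrac{x-x_j^n}{\delta_j}\right)+\mbox{lower order},
\]
and the solvability condition for $\mn^k=1+\nu_n$ is obtained by testing the eigenvalue equation against each approximate kernel element $\partial_{y_\alpha}\tu_j$ on annuli around the bubbles. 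The $O(\ln)$ contribution comes from the next-order expansion of $\un$ away from each $\ka_j$, whose Hessian at $(\ka_1,\dots,\ka_m)$ reproduces $\mathrm{Hess}\,H^m$. Collecting the contributions leads to the symmetric $2m\times 2m$ eigensystem
\[
-\nu_n\,\bs{c}=48\pi\,\ln\,D(\mathrm{Hess}\,H^m)D\,\bs{c}+o(\ln)\,\bs{c},
\]
whose eigenvalues are the $\eta^k$, thereby establishing \eqref{10_1}. The constant $48\pi$ arises from explicit integral identities for the translation modes against $e^U$, and the diagonal matrix $D$ reflects the scales $d_j$.

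Finally, \eqref{10_2} for all $k\ge 3m+1$ follows by contradiction/compactness: any additional eigenvalue $\le 1+o(1)$ would, via Step~1, produce an extra bounded limit in the $\mu_\infty\in\{0,1\}$-eigenspace of the Liouville linearization, but that space has already been exhausted by the $m$ constants plus the $3m$ kernel directions at $\mu_\infty=1$. The main obstacle will be Step~3 in the translation sub-case: the outer expansion of $\un$ has to be pushed to the order at which $\mathrm{Hess}\,H^m$ first appears, the cross-contributions between distinct bubbles through $G(\ka_j,\ka_h)$ must be tracked precisely, and every integration-by-parts boundary term on the matching annuli must be identified so that the limit matrix emerges exactly as $48\pi\,D(\mathrm{Hess}\,H^m)D$. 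Step~2 and the dilation sub-case are also delicate because the logarithmic rate is detected only after a precise matching at the intermediate scale $|x-\ka_j|\sim\sqrt{\delta_j}$.
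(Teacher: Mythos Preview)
Your blow-up-and-classify outline agrees with the paper's architecture, and for the translation modes your ``test against $\partial_{y_\alpha}\tu_j$ on annuli'' is precisely the paper's Green identity of $\partial\un/\partial x_\alpha$ against $\vn/\ln^{1/2}$ on $B_R(x_{j,n})$, whose boundary term is evaluated through the formula \eqref{I} for $I_{\alpha\beta}$ and delivers $\mathrm{Hess}\,H^m$ directly (Proposition~\ref{newp3.16}). Two essential mechanisms, however, are missing from your sketch.

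First, you give no argument forcing $\mu_\infty^k=0$ for $1\le k\le m$ and $\mn^k\le 1+O(\ln)$ for $m+1\le k\le 3m$; the classification of blow-up limits only tells you what $\mu_\infty$ \emph{can} be, not which value each $\mn^k$ actually selects. The paper handles this by inserting explicit test functions into the Rayleigh--Ritz quotient: $\xi_n\un$ for the first $m$ and $\xi_n\,\partial\un/\partial x_\alpha$ for the next $2m$, in each case Gram--Schmidt orthogonalised against all earlier eigenfunctions and with the bubble index $i$ chosen so that the relevant projection $e_i\notin\mathrm{span}\{\c^1,\dots,\c^{k-1}\}$ (resp.\ $e_{i,\alpha}\notin\mathrm{span}\{\a^{m+1},\dots,\a^{k-1}\}$) keeps the denominator bounded away from zero (Propositions~\ref{p3.3}, \ref{p3.6}, \ref{newp4.3}, \ref{newp4.5}). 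Without this variational input the exhaustion argument of your final step has nothing to exhaust.

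Second, your handling of the dilation directions (``an argument parallel to Step~2 but linearised around $\mu=1$'') conceals the paper's decisive technical step. The Green identity with $u_n$ alone yields only $\gamma^0_{j,n}=(8\pi b_j+o(1))/\log\ln$ (Proposition~\ref{prop:2.9}), which is not sharp enough; the paper refines this to $\gamma^0_{j,n}=(1-\mn)\{\tfrac{16\pi}{3}b_j+o(1)\}+o(\ln^{1/2})$ through a \emph{bilinear Rellich--Pohozaev identity} (Proposition~\ref{newp3.14}), and comparing the two representations gives $\mn=1-\tfrac{3}{2}/\log\ln+o(1/\log\ln)>1$ whenever $b_j\ne0$ (Proposition~\ref{newp3.12}). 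The same Pohozaev refinement is what pushes $\gamma^0_{j,n}$ down to $o(\ln^{1/2})$ once $\b=\bm 0$, which is exactly the hypothesis $1-\mn=o(\ln^{1/2})$ required to enter the translation regime of Proposition~\ref{newp3.16}. A matched-asymptotics approach at an intermediate annulus will not produce this identity without an additional idea.

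On your Step~2 specifically: the paper does not match at an intermediate scale. It uses the single Green identity $\int_{\partial B_R(x_{j,n})}(\partial_\nu u_n\cdot v_n/\mu_n-u_n\,\partial_\nu(v_n/\mu_n))=(u_n(x_{j,n})-1/\mu_n)\gamma^0_{j,n}+O(1)$, and since $u_n(x_{j,n})=-2\log\ln+O(1)$ and $\gamma^0_{j,n}\to 8\pi c_j\ne0$, relation \eqref{10} follows at once (Proposition~\ref{newp3.10}). Your inner/outer matching does recover the same leading constant if carried out carefully (the $\log r$ terms cancel at any intermediate radius), so this part is a genuine alternative rather than a gap, but the Green-identity route is shorter and avoids having to justify the intermediate expansion.
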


Theorem \ref{t1a} %follows by the classical mini-max principle and the proof
involves delicate computations. One of the crucial point is to
localize $u_n$ and its partial derivatives around the blowup
points $\kappa_1, \cdots, \kappa_m$. Actually, we will use them as
test functions to estimate the first $3m+1$ eigenvalues.

\begin{remark}
An analogous result to Theorem \ref{t1} has been proved in \cite{BYR95} for positive solutions of the problem
\begin{equation}\label{blr}
            \left\{\begin{array}{lc}
                        -\Delta u=u^{\frac{N+2}{N-2}-\epsilon}  &
            \mbox{  in }\Om\\
              u=0 & \mbox{ on }\partial \Om,
                      \end{array}
                \right.
\end{equation}
where $\Omega\subset\R^N$ is a smooth bounded domain, $N\ge4$ and
$\epsilon$ is small enough. However, the approach used in
\cite{BYR95} is quite different from ours and it does not provide
the estimates of Theorem \ref{t1a}. Similar estimates to
\eqref{10}-\eqref{10_2} for the problem \eqref{blr} was obtained
in \cite{GP05}.
\end{remark}

\begin{remark}
\label{rem_turbulence}
We note that $H^m=H^m(x_1, \dots, x_m)$ 
appears as %casts 
the Hamiltonian in the point vortex theory of Onsager \cite{ons49}.  In this theory
the vortex system
\begin{equation}
\frac{dx_i}{dt}=\nabla^\perp H^m(x_1, \dots, x_m), \quad i=1,\dots, m
 \label{eqn:vs}
\end{equation}
is used to describe the motion of point vortices $\omega(dx,t)=
\sum_{i=1}^N\delta_{x_i(t)}(dx)$ of perfect fluid in {\rosso{a two dimensional}} space.
Then the Gel'fand problem (\ref{1}) arises as a
high-energy limit {\rosso{as}} $N\rightarrow\infty$ in (\ref{eqn:vs}) under
the factorization property, sometimes called the propagation of
chaos \cite{jm73, pl76, es93}. We thus regard (\ref{eqn:vs}) as a
Hamilton system to take the canonical measure by a thermodynamical
relation (see \cite{clmp92, clmp95, kie93, s92} for more rigorous
approach and related mathematical results).

%The above described quantized blowup mechanism to (\ref{1}) reads,
%first, that the particle distribution clusters to several spots at
%the critical level $\lambda=0$. Next, this clustered states is a
%sum of copies of a delta function with the quantized mass $8\pi$,
%and hence each of them is regarded as a particle.  In this
%context, equality (\ref{conditionS}) indicates that the
%Hamiltonian controls these critical states beyond the material
%hierarchy.  One of our motivations is to examine this property up
%to the dynamical level, more precisely, the exact relation of the
%Morse index between the $m$-point blowup solution $u=u(x)$ to
%(\ref{1}) and its associated critical point $(\kappa_1, \dots,
%\kappa_m)$ of $H^m=H^m(x_1,\dots, x_m)$.
\end{remark}

\begin{remark}
\label{rem_bifurcation}
At this stage it may be worth mentioning of \cite{gt10}, where the
authors proved that if $\Omega$ is convex only $m=1$ is admitted
and $H^1$ has only one critical point.
However, bifurcation of critical points of $H^m$ may %,
occur if we perturb the domain into a non-convex one,
%(see \cite{EGP05} and \cite{DKM05}),
which implies the existence of the singular limits with $m>1$ 
%\[ u^\ast=\sum_{j=1}^m8\pi G(\cdot,\kappa_j) \]
%to (\ref{1})
as $\lambda\rightarrow0$ (see, e.g., \cite{ms97}, \cite{ccl03}, \cite{EGP05}, and \cite{DKM05}). %\\
In the generic case, these critical points of $H^m$ after bifurcation are
non-degenerate %in a neighborhood of the bifurcation points
and hence the associated singular limits generate non-degenerate classical solutions to (\ref{1}) for $\lambda$ small %}} \\
as we mentioned before. 
Therefore it may be natural to ask whether the change of Morse
%index
indices of the solutions $\{ u_\lambda\}$ %, at a bifurcation point,
follows from %that of $(\kappa_1, \dots, \kappa_m)$ or not.
the bifurcation of the critical point of $H^m$ or not.
%If the answer is affirmative, there must be a correspondence
%between the Morse index of $u_n$ as a solution to
%(\ref{1}) and the one of $(\kappa_1, \cdots, \kappa_m)$ as a
%critical point of $H^m$. 
%The purpose of the present paper is to
%show that this delicate property occurs.
The conclusion of the present paper supports this delicate property.

\end{remark}

\medskip

%Note that the sign of the $l$-th eigenvalue of
%$D\mathrm{Hess}(H^m)D$ coincides with that of
%$\mathrm{Hess}\,H^m(\ka_1,\cdots,\ka_m)$ (see Lemma
%\ref{sign_eigenvalue}). Consequently it follows that
%\begin{align}
%&\mathrm{ind}_M\{-H^m(\ka_1,\cdots,\ka_m)\}=\#\{k\in\N\,;\, \eta^k>0\}, \label{eqn:13} \\
%&\mathrm{ind}^\ast_M\{-H^m(\ka_1,\cdots,\ka_m)\}=\#\{k\in\N\,;\, \eta^k\geq 0\}. \label{eqn:14}
%\end{align}
 %for $n\gg 1$. Combining the asymptotic behaviors (\ref{10}),
%(\ref{10_1}), and (\ref{10_2}) with the relation
%(\ref{eqn:13})-(\ref{eqn:14}), we obtain
%(\ref{eqn:t1})-(\ref{eqn:t2}).

The quantized blowup mechanism (\ref{2.1}) induces the invariance
of the total degree of the set of solutions to the mean field
equation in dis-quantized intervals of the parameter, and the
degree is related to the genus of $\Omega$ (see
\cite{cl03}).  In this paper we are concentrated on the Gel'fand
problem (\ref{1}), although similar correspondences between the
Morse index of the solution and the Hamiltonian are suspected for
the mean field equation, too.  Our analysis here uses %some
 Y.Y. Li's estimate \cite{YYL99} (see \cite{L07} for an alternative
proof).%, while sharper estimates due to \cite{cl02} are not used.

This paper is organized as follows. Section \ref{s3} contains some
preliminaries and some estimates on the the eigenvalues $\mn$ when
$\mn\ra 0$ and $\mn\ra 1$ which will use in the next sections. In Section \ref{s4} we show the main estimates on the eigenvalues.
In Section \ref{se-t} we prove Theorems \ref{t1} and \ref{t1a}.
 %First, we show $\mn^1\ra 0$ and then $\mn^k\ra 0$, $2\leq k\leq m$, by an induction.
 %Next we show $\mn^{m+1}\ra 1$ and then $\mn^k\ra 1$, $m+2\leq k\leq 3m$, by an induction.
 %Meanwhile sharp asymptotic estimates of these eigenvalues described above are proven.
 In the appendix we show several elementary facts used in the paper.%, such as \eqref{DHD}
%%%%%%%%%%%%%%%%%%%%%%%%%%%%%%%%%SEZIONE
\sezione{Preliminaries and asymptotic estimates}\label{s3}
%\sezione{A first estimate on eigenvalues and eigenfunctions.}\label{s3}
%\section{Rough behavior of eigenvalues and eigenfunctions.}\label{s3}
%%%%%%%%%%%%%%%%%%%%%%%%%%%%%%%%%%%%%%%%%%%%%%%%%%%%%%%%%%%%%%
\sottosezione{The general case $\mn^k\geq 0$}
In this section we show several {\rosso {properties on }}
%general asymptotic profiles arising in
eigenvalues {\rosso {$\{\mn^k\}$ and eigenfunctions $\{\vn^k\}$}} of \eqref{autov-n} for any $k\geq 0$.

Take $0<R\ll 1$ satisfying $B_{2R}(\ka_i)\Subset \Omega$ for $i=1,\dots,m$ and $B_R(\ka_i)\cap B_R(\ka_j)=\emptyset$ if $i\neq j$. For each $\ka_j\in \S$, $j=1,\dots,m$, there exists a sequence $\{x_{j,n}\}\in B_R(\ka_j)$ such that
\begin{itemize}
\item[$ i)$] $\un(x_{j,n})=\sup_{B_R(x_{j,n})}\un(x)\rightarrow +\infty$,
\item[$ii)$] $x_{j,n}\to \ka_j$ as $n\to +\infty$.
\end{itemize}
Then we rescale $\un$ around $x_{j,n}$ as
\begin{equation}\label{2.4}
\tu_{j,n}(\tx):= \un\left( \d_{j,n} \tx +x_{j,n}\right)- \un(x_{j,n})\quad \hbox{ in }B_{\frac{R}{\d_{j,n}}}(0)
\end{equation}
where the scaling parameter $\d_{j,n}$ is determined by
\begin{equation}\label{2.5}
\ln e^{\un(x_{j,n})}\d_{j,n}^2=1.
\end{equation}

By \cite[Corollary 4.3]{GOS11} there exists a constant $d_j>0$ such that
\begin{equation}\label{2.6b}
\d_{j,n}=d_j \ln^{\frac 12}+o\left( \ln^{\frac 12}\right)
\end{equation}
as $n\to \infty$ for a sub-sequence, and in particular, $\delta_{j,n}\tend 0$.  Then relations \eqref{2.5} and \eqref{2.6b} in turn give
\begin{equation}\label{2.6c}
\un(x_{j,n})=-2 \log \ln +O(1)
\end{equation}
as $n\to \infty$ for any $j=1,\dots,m$.
%The modified Hesse matrix used in the previous section is defied by the above $d_j$.
%First, let $D=(D_{ij})$ be the diagonal matrix $\mathrm{diag}[d_1,d_1,d_2,d_2,\cdots,d_m,d_m]$.  Hence this $D$ takes the diagonal components{\rosso{ $D_{2j-1,2j-1}=D_{2j,2j}=d_j$ }}for $j=1,\cdots,m$. Then we put
%\begin{equation}\label{DHD}
%\widetilde{\mathrm{Hess}}\,H^m(\ka_1,\cdots,\ka_m)=D\mathrm{Hess}\,H^m(\ka_1,\cdots,\ka_m)D.
%\end{equation}

\begin{remark}
The above $d_j$ is determined by the blow-up set $\S$. % and hence \eqref{2.6b} arises without taking a subsequence.
Actually it {\rosso{can be proved}} that
\begin{equation}
d_j=\frac{1}{8}\exp\left\{ 4\pi R(\kappa_j)+4\pi\sum_{\substack   {1\leq i\leq m\\ i\neq j}}G(\ka_j,\ka_i)\right\}.
\label{value_dj}
\end{equation}
The proof of (\ref{value_dj}) requires a weak form of sharper estimates due to \cite{cl02}. % although this property is not necessary for the proof of Theorem \ref{t1}.% (see our forthcoming paper for details).
\end{remark}

The function $\tu_{j,n}$ in (\ref{2.4}) satisfies
\begin{equation}\nonumber
\left\{
\begin{array}{ll}
-\Delta \tu_{j,n}=e^{\tu_{j,n}} & \hbox{ in }B_{\frac{R}{\d_{j,n}}}(0)\\
\tu_{j,n}\leq \tu_{j,n}(0)=0& \hbox{ in }B_{\frac{R}{\d_{j,n}}}(0)
\end{array}
\right.
\end{equation}
and then a classification result (see \cite{CL91}) implies
\begin{equation}\label{2.6}
\tu_{j,n}(\tx)\rightarrow U(\tx)=\log \frac 1{\left( 1+\frac{|\tx|^2}8\right)^2} \quad \hbox{ in }C^{\infty}_{loc}(\R^2).
\end{equation}
Moreover (see \cite{YYL99}), it holds that
\begin{equation}\label{2.6a}
\big| \tu_{j,n}(\tx)- U(\tx)\big|\leq C, \quad \forall \tx \in B_{\frac{R}{\d_{j,n}}}(0)
\end{equation}
with a constant $C>0$.

As we did for $\un$, we rescale also the eigenfunctions $\vn$ around $x_{j,n}$ using $\d_{j,n}$ defined by \eqref{2.5}, that is,
\begin{equation}\label{2.8}
\tv_{j,n}(\tx):=\vn\left( \d_{j,n} \tx +x_{j,n}\right)\quad \hbox{ in }B_{\frac{R}{\d_{j,n}}}(0).
\end{equation}
Then it holds that
\begin{equation}\label{2.8a}
\left\{
\begin{array}{ll}
-\Delta \tv_{j,n}=\mn  e^{\tu_{j,n}}\tv_{j,n} & \hbox{ in }B_{\frac{R}{\d_{j,n}}}(0)\\
\nor \tv_{j,n} \nor_{L^{\infty}\big(B_{\frac{R}{\d_{j,n}}}(0)\big)}\leq 1.
\end{array}
\right.
\end{equation}
The following proposition follows 
%by from
from 
the standard elliptic theory.
\begin{proposition}\label{newp3.1}
Let $\{\mn\}$ be a sequence of eigenvalues of \eqref{autov-n} satisfying
$$
\mn\tend\mu_\infty\in\R
$$
as $n\ra \infty$. Then there exist $(V_1,\cdots,V_m)\in C^{2,\alpha}_{loc}(\R^2)^m$ and a sub-sequence satisfying
$$
\tv_{j,n}\ra V_j\quad\text{in $C^{2,\alpha}_{loc}\left(\R^2\right)$}
$$
and
\begin{equation}\label{prob-lim}
-\Delta V_j=\mu_\infty e^UV_j\quad\text{in $\R^2$},\quad \norm{V_j}{\Le{\infty}{\R^2}}\leq 1.
\end{equation}
\end{proposition}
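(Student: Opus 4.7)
The proof is a routine compactness argument based on interior elliptic regularity; the plan is to obtain uniform local $C^{2,\alpha}$ bounds on each rescaled eigenfunction $\tilde v_{j,n}$ and then pass to the limit in \eqref{2.8a} via a diagonal subsequence.

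First I would fix any $\rho > 0$. Since $\delta_{j,n} \to 0$ by \eqref{2.6b}, we have $B_\rho(0) \subset B_{R/\delta_{j,n}}(0)$ for $n$ large, so $\tilde v_{j,n}$ satisfies \eqref{2.8a} on $B_\rho(0)$. The coefficient $\mu_n e^{\tilde u_{j,n}}$ is uniformly bounded on $B_\rho(0)$: indeed $\mu_n \to \mu_\infty$ is bounded, and by \eqref{2.6a} we have $|\tilde u_{j,n}| \leq |U| + C$ on the entire ball of radius $R/\delta_{j,n}$, hence $e^{\tilde u_{j,n}} \leq e^C$ on $B_\rho(0)$. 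Combined with the uniform bound $\|\tilde v_{j,n}\|_\infty \leq 1$, standard $L^p$ estimates give $\tilde v_{j,n}$ bounded in $W^{2,p}(B_{\rho/2}(0))$ for any $p < \infty$, and then Schauder theory yields a uniform bound in $C^{2,\alpha}(\overline{B_{\rho/3}(0)})$ (using also $\tilde u_{j,n} \to U$ in $C^\infty_{loc}$ by \eqref{2.6}, so the coefficient is bounded in $C^\alpha_{loc}$).

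Next, by the Arzelà--Ascoli theorem, along a subsequence $\tilde v_{j,n} \to V_j$ in $C^2(\overline{B_{\rho/4}(0)})$ for some $V_j \in C^{2,\alpha}(\overline{B_{\rho/4}(0)})$. A standard diagonal argument over an exhausting sequence of balls $B_{\rho_k}(0)$ with $\rho_k \to \infty$, applied simultaneously to $j = 1, \dots, m$, produces a single subsequence along which $\tilde v_{j,n} \to V_j$ in $C^{2,\alpha}_{loc}(\mathbb{R}^2)$ for every $j$. Passing to the limit in \eqref{2.8a} using $\mu_n \to \mu_\infty$, $\tilde u_{j,n} \to U$ in $C^\infty_{loc}$, and $\tilde v_{j,n} \to V_j$ in $C^2_{loc}$ yields $-\Delta V_j = \mu_\infty e^U V_j$ in $\mathbb{R}^2$. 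The bound $\|V_j\|_{L^\infty(\mathbb{R}^2)} \leq 1$ follows from the pointwise passage to the limit and the uniform bound $\|\tilde v_{j,n}\|_\infty \leq 1$.

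There is no real obstacle here: the key inputs —- the uniform $L^\infty$ bound on $\tilde v_{j,n}$, the uniform $L^\infty$ bound on the coefficient $e^{\tilde u_{j,n}}$ guaranteed by Y.Y. Li's estimate \eqref{2.6a}, and the $C^\infty_{loc}$ convergence $\tilde u_{j,n} \to U$ —- are all supplied directly by the preceding paragraphs. The only thing to be slightly careful about is that \eqref{2.6a} is what allows control of $e^{\tilde u_{j,n}}$ on balls whose radii grow with $n$, which is what makes the diagonal extraction go through to yield a limit defined on all of $\mathbb{R}^2$.
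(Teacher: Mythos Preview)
Your proof is correct and is precisely what the paper has in mind: the paper's own ``proof'' consists of the single remark that the proposition follows from the standard elliptic theory, and your write-up is exactly the routine compactness/Schauder argument this refers to. The key inputs you identify --- the uniform $L^\infty$ bound on $\tilde v_{j,n}$ from \eqref{2.8a}, the $C^\infty_{loc}$ convergence \eqref{2.6}, and Li's estimate \eqref{2.6a} for uniform control of $e^{\tilde u_{j,n}}$ --- are the right ones.
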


\begin{remark}\label{newr3.2}
Since it may happen that $V_j\equiv 0$ {\rosso{ for any $j=1,\dots,m$}}, from \eqref{prob-lim} we cannot derive that $\mu_\infty$ is an eigenvalue of
\begin{equation}\label{new3.2}
-\lap V=\alpha e^UV\quad\text{in $\R^2$},\quad V\in\Le{\infty}{\R^2}.
\end{equation}
Later we shall prove that $V_j\not\equiv 0$ for some $j\in\{1,\cdots,m\}$, and consequently, $\mu_\infty$ is {\rosso{ actually}} an eigenvalue of \eqref{new3.2}.
\end{remark}

The structure of the eigenvalue problem (\ref{new3.2}) is described in \cite[Theorem 11.1]{GG09}.  All the eigenvalues are thus given as $\alpha_k=\frac{k(k+1)}{2}$ for $k=0,1,2,\cdots$, with the multiplicity $2k+1$.  To examine the Morse index of $u_n$, therefore, we need to study the cases $\mu_\infty=\alpha_0=0$ and $\mu_\infty=\alpha_1=1$.  %For these cases the eigenfunctions are given as follows:
\begin{remark}\label{remark-2.4} We have that
\begin{enumerate}
\item[$i)$] If $\mu_\infty=\alpha_0=0$ we have $V_j\equiv c_j$, where $c_j\in\R\backslash\{0\}$ is a constant.  Let $\c=(c_1,\cdots,c_m)$.
\item[$ii)$] If $\mu_\infty=\alpha_1=1$ there exists a vector $\bm{a}_j\in\R^2$ and a constant $b_j\in\R$ such that $(\bm{a}_j,b_j)\not=(\bm{0},0)$ and $V_j=\bm{a}_j\cdot\nabla U+b_j\clo{U}$, where $\clo{U}=\tx \cdot\nabla U+2$.  Let $\a=(\bm{a}_1,\cdots,\bm{a}_m)\in \R^{2m}$ and $\b=(b_1,\cdots,b_m)\in \R^m$.
\end{enumerate}
\end{remark}
\bigskip
In the next proposition we show the asymptotic profile of $v_n$ in $\clo{\Omega}\backslash\{\kappa_1,\cdots,\kappa_m\}$.
%%%%%%%%%%%%%%%%%%%DA QUI
\begin{proposition}\label{newp3.3}
For $0<R\ll 1$ it holds that
\begin{equation}\label{asin-v-n}
\frac{v_n(x)}{\mu_n}=\sum_{j=1}^m\left\{\gamma_{j,n}^0G(x,x_{j,n})+\bm{\gamma^1}_{j,n}\cdot\nabla_yG(x,x_{j,n})\right\}+o(\lambda_n^\frac{1}{2})
\end{equation}
uniformly in $\clo{\Omega}\backslash\cup_{j=1}^m\ball{R}{\kappa_j}$, where
\begin{align*}
\gamma_{j,n}^0=\int_{\ball{R}{x_{j,n}}}\ln e^{u_n}{v_n}\, dx
\end{align*}
and $\bm{\gamma^1}_{j,n}=(\gamma_{j,n}^{1,1},\gamma_{j,n}^{1,2})$ with
\begin{align*}
\gamma_{j,n}^{1,\alpha}=\int_{\ball{R}{x_{j,n}}}(x-x_{j,n})_\alpha\ln e^{u_n}{v_n}\, dx, \quad \alpha=1,2.
\end{align*}
\end{proposition}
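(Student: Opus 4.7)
The plan is to apply Green's representation to $v_n$ and isolate the contributions from small disks around each blow-up point, where a Taylor expansion of $G(x,\cdot)$ produces the $\gamma^0_{j,n}$ and $\bm{\gamma^1}_{j,n}$ terms. Since $-\Delta v_n = \mu_n \lambda_n e^{u_n} v_n$ in $\Omega$ with $v_n = 0$ on $\partial\Omega$, one has
$$\frac{v_n(x)}{\mu_n} = \int_\Omega G(x,y)\, \lambda_n e^{u_n(y)} v_n(y)\, dy,$$
and I would split this as $\Omega = \bigl(\bigcup_{j=1}^m B_R(x_{j,n})\bigr) \cup \Omega_n^{\ast}$, with $\Omega_n^{\ast} := \Omega \setminus \bigcup_{j} B_R(x_{j,n})$, treating the two parts separately.

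On $\Omega_n^{\ast}$, the convergence \eqref{2.2} gives $\|u_n\|_{L^\infty(\Omega_n^{\ast})} = O(1)$ for $n$ large, so $\lambda_n e^{u_n(y)} = O(\lambda_n)$ uniformly in $y$ there. Combined with $\|v_n\|_\infty \le 1$ and $\sup_{x\in\Omega}\int_\Omega G(x,y)\,dy < \infty$, this yields a contribution of order $O(\lambda_n) = o(\lambda_n^{1/2})$ uniformly in $x \in \overline{\Omega} \setminus \bigcup_j B_R(\kappa_j)$.

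For the near-field pieces, fix such an $x$: for $n$ large $|x - x_{j,n}| \ge R/2$, so $G(x,\cdot)$ is $C^2$ on $B_R(x_{j,n})$ with $C^2$-norm bounded uniformly in $x$. Taylor expansion yields
$$G(x,y) = G(x,x_{j,n}) + \nabla_y G(x, x_{j,n}) \cdot (y - x_{j,n}) + r_j(x,y), \qquad |r_j(x,y)| \le C|y - x_{j,n}|^2,$$
and inserting this into the integral over $B_R(x_{j,n})$ reproduces exactly $\gamma_{j,n}^0 G(x, x_{j,n}) + \bm{\gamma^1}_{j,n} \cdot \nabla_y G(x, x_{j,n})$, with an error controlled by the second moment $M_{j,n} := \int_{B_R(x_{j,n})} |y - x_{j,n}|^2\, \lambda_n e^{u_n(y)} |v_n(y)|\, dy$.

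To estimate $M_{j,n}$ I would rescale via \eqref{2.4}--\eqref{2.5} and invoke Li's $L^\infty$ bound \eqref{2.6a}, which gives $e^{\tilde u_{j,n}} \le C e^U$ on $B_{R/\delta_{j,n}}(0)$. Since $|\tilde v_{j,n}| \le 1$,
$$M_{j,n} = \delta_{j,n}^2 \int_{B_{R/\delta_{j,n}}(0)} |\tilde y|^2\, e^{\tilde u_{j,n}}\, |\tilde v_{j,n}|\, d\tilde y \;\le\; C \delta_{j,n}^2 \int_{B_{R/\delta_{j,n}}(0)} \frac{|\tilde y|^2}{(1+|\tilde y|^2/8)^2}\, d\tilde y \;=\; O\bigl(\delta_{j,n}^2 |\log \delta_{j,n}|\bigr),$$
since the integrand decays only like $|\tilde y|^{-2}$ at infinity. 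By \eqref{2.6b} this equals $O(\lambda_n|\log\lambda_n|) = o(\lambda_n^{1/2})$, matching the error claimed in \eqref{asin-v-n}. The main delicate point is precisely this borderline logarithmic divergence of $\int |\tilde y|^2 e^U\, d\tilde y$; the factor $\delta_{j,n}^2 \sim \lambda_n$ from rescaling is just enough to absorb the $|\log \lambda_n|$ and land inside the allowed $o(\lambda_n^{1/2})$.
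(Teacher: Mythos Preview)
Your approach is essentially the paper's: Green's representation, split into near/far pieces, Taylor expand $G(x,\cdot)$ around each $x_{j,n}$, and estimate the second-order remainder by rescaling and invoking Li's bound \eqref{2.6a}. There is, however, a technical slip that needs repair.

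You claim that for $x\in\overline{\Omega}\setminus\bigcup_j B_R(\kappa_j)$ the function $G(x,\cdot)$ is $C^2$ on $B_R(x_{j,n})$ with uniform bounds, since $|x-x_{j,n}|\ge R/2$. That lower bound is correct, but it does not give what you need: a point $x$ with $|x-\kappa_j|=R$ can lie \emph{inside} $B_R(x_{j,n})$ once $x_{j,n}$ has drifted slightly toward $x$, so the singularity of $G(x,\cdot)$ at $y=x$ sits in your domain of integration and the remainder bound $|r_j(x,y)|\le C|y-x_{j,n}|^2$ fails near that point. The paper avoids this by running the Taylor expansion only on the smaller ball $B_{R/4}(x_{j,n})$ (on which $|x-y|\ge R/2$ uniformly, once $x_{j,n}\in B_{R/4}(\kappa_j)$), and then observing that the annular correction
\[
\int_{B_R(x_{j,n})\setminus B_{R/4}(x_{j,n})}\lambda_n e^{u_n}|v_n|\,dy=O(\lambda_n),
\]
so the coefficients $\gamma^0_{j,n}$ and $\bm{\gamma^1}_{j,n}$ defined via $B_R$ are recovered up to $o(\lambda_n^{1/2})$. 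With this shrink-and-correct step inserted, your argument goes through.

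Your handling of the second-order remainder via the borderline estimate $\int_{B_{R/\delta_{j,n}}}|\tilde y|^2 e^U\,d\tilde y=O(|\log\delta_{j,n}|)$, giving $M_{j,n}=O(\lambda_n|\log\lambda_n|)=o(\lambda_n^{1/2})$, is a valid alternative to the paper's device of writing $|y-x_{j,n}|^2\le R^\varepsilon|y-x_{j,n}|^{2-\varepsilon}$ to obtain a convergent rescaled integral and an $O(\delta_{j,n}^{2-\varepsilon})$ error.
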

\begin{proof}
We may assume $x_{j,n}\in\ball{\frac{R}{4}}{\kappa_j}$.  Using Green's representation formula, we have
\begin{align*}
&\frac{v_n(x)}{\mu_n}=\int_\Omega G(x,y)\ln e^{u_n}v_ndy\\
&=\sum_{j=1}^m\int_{\ball{\frac{R}{4}}{x_{j,n}}}G(x,y)\ln e^{u_n}v_ndy+\int_{\Omega\backslash\cup_{j=1}^m\ball{\frac{R}{4}}{x_{j,n}}}G(x,y)\ln e^{u_n}v_ndy.
\end{align*}
Here it holds that,
\begin{eqnarray*}
& & \left|\int_{\Omega\backslash\cup_{j=1}^m\ball{\frac{R}{4}}{x_{j,n}}}G(x,y)\ln e^{u_n}v_n\right| \\
& & \leq O(\ln)\int_{\Omega\backslash\cup_{j=1}^m\ball{\frac{R}{4}}{x_{j,n}}}|G(x,y)|dy=O(\ln).
\end{eqnarray*}
Taylor's theorem, on the other hand, implies
$$
G(x,y)=G(x,x_{j,n}) + (y-x_{j,n})\cdot\nabla_y G(x,x_{j,n}) + s(x,\eta,y-x_{j,n})
$$
for $x \in \clo{\Omega}\setminus \ball{R}{\kappa_j}
%(\Subset\clo{\Omega}\setminus \ball{\frac{R}{2}}{\kappa_j}\Subset
{\rosso{\Rightarrow x\in\clo{\Omega}\setminus \ball{\frac{R}{4}}{x_{j,n}}}}$ and $y \in \ball{\frac{R}{4}}{x_{j,n}}$, where
\begin{eqnarray*}
s(x,\eta,y-x_{j,n}) &=& \frac{1}{2} \sum_{1 \leq \alpha, \beta \leq 2} G_{y_\alpha y_\beta}(x, \eta) (y-x_{j,n})_\alpha(y - x_{j,n})_\beta, \\
\eta &=& \eta(j,n,y) \in \ball{\frac{R}{4}}{x_{j,n}}.
\end{eqnarray*}
Then we obtain
\begin{align*}
&\int_{\ball{\frac{R}{4}}{x_{j,n}}}G(x,y)\ln e^{u_n}v_ndy=G(x,x_{j,n})\int_{\ball{\frac{R}{4}}{x_{j,n}}}\ln e^{u_n}v_ndy\\
&\quad+\nabla_yG(x,x_{j,n})\cdot \int_{\ball{\frac{R}{4}}{x_{j,n}}}(y-x_{j,n})\ln e^{u_n}v_ndy\\
&\qquad+\frac{1}{2}\sum_{1 \leq \alpha, \beta \leq 2}\int_{\ball{\frac{R}{4}}{x_{j,n}}}(y-x_{j,n})_\alpha(y-x_{j,n})_\beta G_{y_\alpha y_\beta}(x, \eta) \ln e^{u_n}v_ndy.
\end{align*}

{\rosso{So}} we have
\begin{align*}
\int_{\ball{\frac{R}{4}}{x_{j,n}}}\ln e^{u_n}v_ndy&=\gamma_{j,n}^0-\int_{\ball{R}{\ka_j}\backslash\ball{\frac{R}{4}}{x_{j,n}}}\ln e^{u_n}v_ndy=\gamma_{j,n}^0+O(\ln)\\
&=\gamma_{j,n}^0+o\left(\ln^\frac{1}{2}\right)
\end{align*}
and also
$$
\int_{\ball{\frac{R}{4}}{x_{j,n}}}(y-x_{j,n})_\alpha\ln e^{u_n}v_ndy=\gamma_{j,n}^{1,\alpha}+O(\ln)=\gamma_{j,n}^{1,\alpha}+o\left(\ln^\frac{1}{2}\right).
$$
Finally, taking $\vep\in(0,1)$, we get
\begin{align*}
&\left|\int_{\ball{\frac{R}{4}}{x_{j,n}}}G_{y_\alpha y_\beta}(x, \eta)(y-x_{j,n})_\alpha(y-x_{j,n})_\beta  \ln e^{u_n}v_ndy\right|\\
&\leq CR^{\vep}\int_{\ball{\frac{R}{4}}{x_{j,n}}}|y-x_{j,n}|^{2-\vep}\ln e^{u_n}dy\\
&=CR^\vep\delta_{j,n}^{2-\vep}\int_{\ball{\frac{R}{4\delta_{j,n}}}{0}}|\til{y}|^{2-\vep} e^{\til{u}_{j,n}}d\til{y}=O\left(\delta_{j,n}^{2-\vep}\right)=o\left(\ln^\frac{1}{2}\right)
\end{align*}
by \eqref{2.6a}.  The proof is complete.
\end{proof}
\begin{remark}\label{newr3.4}
Using that
$$
\frac{\nabla v_n}{\mn}=\int_\Omega \nabla_x G(x,y)\ln e^{u_n}v_ndy,
$$
similarly we have
\begin{equation}\label{new3.3}
\frac{1}{\mu_n}\cdot\frac{\de v_n}{\de x_\alpha}=\sum_{j-1}^m\left\{\gamma_{j,n}^0G_{x_\alpha}(x,x_{j,n})+\bm{\gamma^1}_{j,n}\cdot\nabla_yG_{x_\alpha}(x,x_{j,n})\right\}+o\left(\lambda_n^\frac{1}{2}\right)
\end{equation}
for $\alpha=1,2$, uniformly in $\clo{\Omega}\backslash\cup_{j=1}^m\ball{R}{\kappa_j}$.
\end{remark}
\begin{remark}\label{newr3.5}
Since $\gamma_{j,n}^0=O(1)$ and $\bm{\gamma^1}_{j,n}=O(1)$, we have
{\rosso{
\begin{equation}\label{2.13}
v_n=O(\mn)\quad \text{ in }C^1\left( \clo \Omega\setminus \cup_{j=1}^m B_R(k_j)\right)
\end{equation}
for every $\mu_\infty\in \R$.}}
\end{remark}

% Then Proposition \ref{newp3.3} implies
%\begin{equation}\label{new3.4}
%v_n\tend 0\quad \text{in $\Co{1}{\clo{\Omega}\backslash\cup_{j=1}^m\ball{R}{\kappa_j}}$}
%\end{equation}
%if $\mn\ra\mu_\infty=0$. This property \eqref{new3.4} is valid even for $\mu_\infty\not=0$.  In fact, since
%\begin{align}
%|\gamma_{j,n}^{1,\alpha}|&\leq\int_{\ball{R}{x_{j,n}}}|y-x_{j,n}|\ln e^{u_n}dy
%\nonumber\\
%&=\delta_{j,n}\int_{\ball{\frac{R}{\delta_{j,n}}}{0}}|\til{y}|e^{\til{u}_{j,n}}d\til{y}=O(\delta_{j,n})=O\left(\ln^\frac{1}{2}\right)
%\label{new3.4.5}
%\end{align}
%we obtain \eqref{new3.4} with the next proposition.
%\end{remark}
%

\begin{proposition}\label{newp3.6}
If $\mu_\infty\not=0$ then it follows that
\begin{equation}\label{new3.5.5}
\gamma^0_{j,n}=O\left(\frac{1}{\log \ln}\right)
\end{equation}
{\rosso{and
\begin{equation}\label{gamma-1}
\bm{\gamma^1}_{j,n}=O\left(\ln^{\frac 12}\right).
\end{equation}}}
\end{proposition}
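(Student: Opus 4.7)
The plan is to derive \eqref{new3.5.5} via Green's second identity applied to the pair $(u_n,v_n)$ on the ball $B_R(x_{j,n})$, which transforms $\gamma^0_{j,n}$ into a linear equation whose coefficient is dominated by the large quantity $u_n(x_{j,n})\sim -2\log\lambda_n$ from \eqref{2.6c}. Inversion of that coefficient, which is legitimate precisely because $\mu_\infty\ne 0$, then produces the rate $1/\log\lambda_n$. The second estimate \eqref{gamma-1} is a direct consequence of rescaling and the decay of $e^U$.

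For \eqref{new3.5.5}, I would start from the identity
\[
\gamma^0_{j,n}-\mu_n\int_{B_R(x_{j,n})}u_n\,\lambda_n e^{u_n}v_n\,dx=\int_{\partial B_R(x_{j,n})}\bigl(u_n\,\partial_\nu v_n-v_n\,\partial_\nu u_n\bigr)\,d\sigma,
\]
obtained from Green's second identity together with $-\Delta u_n=\lambda_n e^{u_n}$ and $-\Delta v_n=\mu_n\lambda_n e^{u_n}v_n$. Since $\partial B_R(x_{j,n})\subset\clo{\Omega}\setminus\cup_i\ball{R/2}{\kappa_i}$ for $n$ large, \eqref{2.13} and its $C^1$-version in Remark \ref{newr3.4} give $v_n=O(\mu_n)$ and $\nabla v_n=O(\mu_n)$ there, while the $C^2_{loc}$-convergence \eqref{2.2} yields $u_n=O(1)$ and $\nabla u_n=O(1)$; hence the boundary integral is $O(\mu_n)$. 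I would then rescale the interior integral via $x=x_{j,n}+\delta_{j,n}\tilde x$, use \eqref{2.5} to set $\lambda_n e^{u_n(x_{j,n})}\delta_{j,n}^2=1$, and decompose $u_n(x)=u_n(x_{j,n})+\tilde u_{j,n}(\tilde x)$: the leading term pulls out as $u_n(x_{j,n})\cdot\gamma^0_{j,n}$, while the remainder $\int \tilde u_{j,n}\,e^{\tilde u_{j,n}}\tilde v_{j,n}\,d\tilde x$ is $O(1)$ by \eqref{2.6a} (which forces $e^{\tilde u_{j,n}}\leq C e^U$), the bound $\|\tilde v_{j,n}\|_\infty\leq 1$, and the integrability of $|U|e^U$ on $\R^2$. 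Collecting everything and inserting \eqref{2.6c} yields
\[
\gamma^0_{j,n}\bigl(1+2\mu_n\log\lambda_n+O(\mu_n)\bigr)=O(\mu_n),
\]
and since $\mu_\infty\ne 0$ forces $|\mu_n\log\lambda_n|\to\infty$, the bracket behaves like $2\mu_n\log\lambda_n$, yielding $\gamma^0_{j,n}=O(1/\log\lambda_n)$.

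The estimate \eqref{gamma-1} is simpler: the same change of variable gives
\[
\gamma^{1,\alpha}_{j,n}=\delta_{j,n}\int_{B_{R/\delta_{j,n}}(0)}\tilde x_\alpha\,e^{\tilde u_{j,n}}\tilde v_{j,n}\,d\tilde x,
\]
and the integrand is dominated in absolute value by $C|\tilde x|e^U\in L^1(\R^2)$; combined with $\delta_{j,n}=O(\lambda_n^{1/2})$ from \eqref{2.6b}, this yields $\bm{\gamma^1}_{j,n}=O(\lambda_n^{1/2})$. The main obstacle I anticipate is in the first estimate: the boundary contribution must be controlled at the sharp order $O(\mu_n)$, rather than at the naive $O(1)$, since otherwise the right-hand side overwhelms the algebraic gain that comes from dividing by the bracket $1-\mu_n u_n(x_{j,n})$, and the improvement from $O(1)$ to $O(1/\log\lambda_n)$, which is the actual content of \eqref{new3.5.5}, would be lost.
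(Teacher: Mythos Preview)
Your argument is correct and follows essentially the same route as the paper: Green's identity on $B_R(x_{j,n})$ for the pair $(u_n,v_n)$, the decomposition $u_n=u_n(x_{j,n})+\tilde u_{j,n}$ in the interior integral, and the direct rescaling for $\bm{\gamma^1}_{j,n}$. The only cosmetic difference is that the paper applies the identity to $(u_n,v_n/\mu_n)$, so the boundary term is $O(1)$ and the coefficient in front of $\gamma^0_{j,n}$ is $u_n(x_{j,n})-1/\mu_n$; your version with $(u_n,v_n)$ gives a boundary term of order $O(\mu_n)$ and coefficient $1-\mu_n u_n(x_{j,n})$, which is algebraically the same equation after multiplying through by $\mu_n$.
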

\begin{proof}
From equations \eqref{1} and \eqref{autov-n}, we have
\begin{align}
\int_{\de B_R(x_{j,n}) } &\Big( \frac{\de \un}{\de \nu} \frac{\vn}{\mn}-\un \frac{\de}{\de \nu}\frac{\vn}{\mn}\Big)\, dx=
\int_{B_R(x_{j,n}) }\left(\Delta \un \frac{\vn}{\mn}-\un \Delta \frac{\vn}{\mn}\right) \, dx\nonumber\\
=&-\ln \int_{B_R(x_{j,n}) }\!\!\!\!\!\!e^{\un}\frac{\vn}{\mn} \, dx+ \ln\int_{B_R(x_{j,n}) } \!\!\!\!\!\!e^{\un}\vn\un \, dx\nonumber\\
=&-\ln \int_{B_R(x_{j,n}) }\!\!\!\!\!\!e^{\un}\frac{\vn}{\mn} \, dx+ \un(x_{j,n})\ln\int_{B_R(x_{j,n}) }\!\!\!\!\!\! e^{\un}\vn \, dx\nonumber\\
&\qquad+ \ln \int_{B_R(x_{j,n}) }\!\!\!\!\!\! e^{\un}\vn \big\{ \un -\un(x_{j,n})\big\}\,dx\nonumber\\
=&\left(u_n(x_{j,n})-\frac{1}{\mn}\right)\gamma^0_{j,n}+ \ln \int_{B_R(x_{j,n}) }\!\!\!\!\!\!e^{\un}\vn \big\{ \un -\un(x_{j,n})\big\}\,dx.\label{stima}
\end{align}
Here it holds that
\begin{align}
\ln \int_{B_R(x_{j,n}) }\!\!\!e^{\un}\vn \big\{ \un -\un(x_{j,n})\big\}\,dx&=\int_{B_{\frac{R}{\delta_{j,n}}}(0) } e^{\til{u}_{j,n}}\til{v}_{j,n}\til{u}_{j,n}\,d\til{x}\nonumber
\\
&\ra\int_{\R^2}e^UV_jU=O(1),
 \label{2.14+1}
\end{align}
while \eqref{2.2} and \eqref{new3.3} imply
\begin{equation}\label{2.18a}
\int_{\de B_R(x_{j,n}) } \Big( \frac{\de \un}{\de \nu}\frac{\vn}{\mn}-\un \frac{\de}{\de \nu}\frac{\vn}{\mn}\Big)\, dx=O(1).
\end{equation}
Hence we obtain
$$
\gamma_{j,n}^0=\frac{O(1)}{u_n(x_{j,n})-\frac{1}{\mn}}=O\left(\frac{1}{\log \ln}\right)
$$
by \eqref{stima}, {\rosso{\eqref{2.6c} and}} under the assumption of $\mu_\infty\not=0$.
{\rosso{Moreover
\begin{align}
|\gamma_{j,n}^{1,\alpha}|&\leq\int_{\ball{R}{x_{j,n}}}|y-x_{j,n}|\ln e^{u_n}dy
\nonumber\\
&=\delta_{j,n}\int_{\ball{\frac{R}{\delta_{j,n}}}{0}}|\til{y}|e^{\til{u}_{j,n}}d\til{y}=O(\delta_{j,n})=O\left(\ln^\frac{1}{2}\right) .
\label{new3.4.5}
\end{align}}}
\end{proof}
\begin{corollary}
For every $\mu_\infty\in \R$ it holds that
\begin{equation}\label{new3.4}
v_n\tend 0\quad \text{in $\Co{1}{\clo{\Omega}\backslash\cup_{j=1}^m\ball{R}{\kappa_j}}$}.
\end{equation}
\end{corollary}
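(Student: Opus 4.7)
My plan is to split the argument into two cases, depending on whether the limit eigenvalue $\mu_\infty$ vanishes or not, and in each case simply harvest the information already established by Propositions \ref{newp3.3}, \ref{newp3.6} and the remarks that follow them.

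If $\mu_\infty=0$ then $\mu_n\to 0$, and the uniform $C^1$-bound $v_n=O(\mu_n)$ provided by \eqref{2.13} of Remark \ref{newr3.5} immediately collapses to $v_n\to 0$ in $C^1(\overline{\Omega}\setminus\cup_{j=1}^m B_R(\kappa_j))$, with no further work required. If instead $\mu_\infty\neq 0$, I invoke Proposition \ref{newp3.6} to upgrade the naive $O(1)$ bounds on the Green-function coefficients to the vanishing rates $\gamma^0_{j,n}=O(1/\log\lambda_n)$ and $\bm{\gamma^1}_{j,n}=O(\lambda_n^{1/2})$, both of which are $o(1)$. Since $x_{j,n}\to\kappa_j$, for $n$ large the kernels $G(x,x_{j,n})$, $\nabla_xG(x,x_{j,n})$, $\nabla_yG(x,x_{j,n})$ and the mixed second derivatives $G_{x_\alpha y_\beta}(x,x_{j,n})$ are uniformly bounded in $n$ on $\overline{\Omega}\setminus\cup_{j=1}^m B_R(\kappa_j)$. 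Substituting the vanishing rates into the Green representations \eqref{asin-v-n} for $v_n/\mu_n$ and \eqref{new3.3} for its first partial derivatives, the leading finite sums tend to zero uniformly while the remainders $o(\lambda_n^{1/2})$ are already negligible. Hence $v_n/\mu_n\to 0$ uniformly in $C^1$ on that set, and multiplying by the bounded sequence $\mu_n\to\mu_\infty$ gives $v_n\to 0$ in $C^1$ as claimed.

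The only point that really has to be checked, and that I view as the main (modest) obstacle, is that the decay rates for $\gamma^0_{j,n}$ and $\bm{\gamma^1}_{j,n}$ coming from Proposition \ref{newp3.6}, together with the uniform boundedness of the Green kernel and its first two derivatives on compact subsets away from the blow-up set, genuinely dominate the $o(\lambda_n^{1/2})$ error terms in \eqref{asin-v-n} and \eqref{new3.3}; this is routine. Beyond this verification the corollary is essentially a direct bookkeeping consequence of the preceding propositions, because the two regimes $\mu_\infty=0$ and $\mu_\infty\neq 0$ are exactly complementary and each is covered by one of them.
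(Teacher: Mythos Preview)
Your proposal is correct and follows essentially the same approach as the paper: split into the cases $\mu_\infty=0$ and $\mu_\infty\neq 0$, use Remark \ref{newr3.5} for the first, and combine Proposition \ref{newp3.3} (and its $C^1$ analogue \eqref{new3.3}) with the decay rates from Proposition \ref{newp3.6} for the second. The paper's proof is just a two-line citation of these same ingredients; your version simply spells out the bookkeeping (uniform boundedness of the Green kernels away from the blow-up set, and multiplication by the bounded sequence $\mu_n$) in more detail.
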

\begin{proof}
If $\mu_\infty=0$ \eqref{new3.4} follows by % Proposition \ref{newp3.3} and
Remark \ref{newr3.5}. Otherwise, if $\mu_\infty\neq 0$
then \eqref{new3.4} follows by Proposition \ref{newp3.3} and Proposition \ref{newp3.6}.
\end{proof}
\begin{remark}
For every $\mu_\infty\neq 0$ \eqref{2.18a} becomes
\begin{equation}\label{2.18b}
\int_{\de B_R(x_{j,n}) } \Big( \frac{\de \un}{\de \nu}\frac{\vn}{\mn}-\un \frac{\de}{\de \nu}\frac{\vn}{\mn}\Big)\, dx=o(1).
\end{equation}
This estimate will be useful in the sequel.
\end{remark}

%%%%%%%%%%%%%%%%%%%%%%%%SPOSTARE!!!!

%\begin{remark}
%Similarly to (\ref{new3.4.5}) we have
%\begin{equation}\label{new3.5}
%\frac{\gamma^1_{j,n}}{\delta_{j,n}}\ra \int_{\R^2}\til{y}e^U\left\{\bm{a}_j\cdot\nabla U+b_j\clo{U}\right\}=-8\pi\bm{a}_j
%\end{equation}
%in the case of $\mu_\infty=1$.  We have also the following proposition.
%\end{remark}
%

%\begin{remark}\label{newr3.7}
%If $\mu_\infty=0$ it holds that
%$$
%\gamma_{j,n}^0=\int_{\ball{R}{x_{j,n}}}\ln e^{u_n}{v_n}=\int_{\ball{\frac{R}{\delta_{j,n}}}{0}}e^{\til{u}_{j,n}}{\til{v}_{j,n}}\tend c_j\int e^U=8\pi c_j.
%$$
%In this case the argument used for the proof of Proposition \ref{newp3.6} gives a rough asymptotic behavior of $\mn$ (see Proposition \ref{newp3.10}).
%\end{remark}
%
%\bigskip
%
%The results obtained so far are the following: First, for every $\mu_\infty\in\R$, it holds that
%\begin{equation}\label{new3.9}
%v_n\tend 0\quad\text{in $\Co{1}{\clo{\Omega}\backslash\cup_{j=1}^m\ball{R}{\kappa_j}}$}.
%\end{equation}
%Next, we have
%\begin{equation}\label{new3.10}
%v_n=O(\mu_n)\quad\text{in $\Co{1}{\clo{\Omega}\backslash\cup_{j=1}^m\ball{R}{\kappa_j}}$}
%\end{equation}
%and
%\begin{align}
%v_n&=\sum_{j=1}^m\left\{\frac{8\pi b_j+o(1)}{\log\ln}G(x,x_{j,n})-8\pi d_j\lambda^\frac{1}{2}\bm{a}_j\cdot\nabla_y G(x,x_{j,n})\right\}+o\left(\lambda^\frac{1}{2}\right)\nonumber\\
%&\qquad\qquad\text{in $\Co{1}{\clo{\Omega}\backslash\cup_{j=1}^m\ball{R}{\kappa_j}}$}.
%\label{new3.11}
%\end{align}
%if $\mu_\infty=0$ and $\mu_\infty=1$, respectively.
%

\begin{proposition}\label{newp3.8}
There exists $j\in\{1,\cdots,m\}$ satisfying $V_j\not\equiv 0$.
\end{proposition}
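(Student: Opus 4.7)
I would argue by contradiction: assume $V_j\equiv 0$ for every $j=1,\dots,m$ and aim to derive $\|v_n\|_\infty\to 0$, which contradicts the normalisation $\max_{\overline{\Omega}}v_n=1$. Splitting $\overline{\Omega}$ into the exterior of the bubbles, the bubble cores, and the intermediate annuli around the bubbles reduces the proof to establishing uniform vanishing of $v_n$ on each of these three regions.

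On the exterior $\overline{\Omega}\setminus\bigcup_{j}B_{R}(\kappa_j)$, the Corollary just proved gives $v_n\to 0$ in $C^1$---see \eqref{new3.4}. On each bubble core $B_{M\delta_{j,n}}(x_{j,n})$ with $M$ an arbitrarily large fixed constant, the $C^{2,\alpha}_{loc}$-convergence $\tilde v_{j,n}\to V_j\equiv 0$ supplied by Proposition~\ref{newp3.1} yields $\sup_{|\tilde z|\leq M}|\tilde v_{j,n}|\to 0$ for each $j$ automatically.

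The substantive step is the intermediate annular region $B_{R}(\kappa_j)\setminus B_{M\delta_{j,n}}(x_{j,n})$, where the previous two items supply only boundary data. In the rescaled coordinates this becomes an annulus $A_n$ of fixed inner radius $M$ but outer radius of order $R/\delta_{j,n}\to\infty$, on which $\tilde v_{j,n}$ satisfies the linear equation $-\Delta\tilde v_{j,n}=g_n\tilde v_{j,n}$ with $g_n:=\mu_n e^{\tilde u_{j,n}}$. The key observation is that \eqref{2.6a} forces the quartic decay $g_n(\tilde z)\leq C\mu_n/|\tilde z|^4$ at infinity, which is exactly matched by $-\Delta(1-c/|\tilde z|^2)=4c/|\tilde z|^4$. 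Choosing $c$ of order $\mu_n$ (admissible since the $\mu_n$ are bounded) and $M$ larger than an absolute constant, the radial function $\phi(\tilde z):=1-c/|\tilde z|^2$ becomes a strict positive supersolution for $-\Delta-g_n$ on $A_n$, with $\phi\geq 1/2$ there. Its existence forces the principal Dirichlet eigenvalue of $-\Delta-g_n$ on $A_n$ to be strictly positive, and the weak maximum principle then yields $\sup_{A_n}|\tilde v_{j,n}|\leq C\sup_{\partial A_n}|\tilde v_{j,n}|\to 0$: the inner boundary of $A_n$ lies in the core region and the outer boundary on $\partial B_R(\kappa_j)$, which are handled by the second and first steps respectively.

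Combining the three regional bounds contradicts $\max_{\overline{\Omega}}v_n=1$. The only delicate ingredient is the intermediate-annulus step; its success hinges on the sharp quartic decay of $e^{\tilde u_{j,n}}$ at infinity encoded in \eqref{2.6a} (Y.Y.\,Li's estimate), which is precisely what allows the explicit ansatz $\phi(\tilde z)=1-c/|\tilde z|^2$ to serve as a supersolution of the correct operator.
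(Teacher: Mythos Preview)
Your proof is correct and shares the paper's overall architecture: argue by contradiction, handle the exterior via \eqref{new3.4}, the core via the $C^{2,\alpha}_{loc}$ convergence of Proposition~\ref{newp3.1}, and then deal with the neck. Both arguments rest on Y.Y.~Li's estimate \eqref{2.6a} for the neck. The difference lies in how the neck is treated. The paper performs a Kelvin inversion $\hat v_{j,n}(x)=\tilde v_{j,n}(x/|x|^2)$, which sends the expanding annulus to a shrinking one near the origin and turns the potential into the bounded weight $\mu_n|x|^{-4}e^{\hat u_{j,n}}$; it then subtracts the solution $w_{j,n}$ of an auxiliary Dirichlet problem (whose right-hand side tends to $0$ in every $L^p$ by dominated convergence) to obtain a harmonic function, and finishes with the classical maximum principle. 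Your route is more direct: the quartic decay $e^{\tilde u_{j,n}}\le C(1+|\tilde z|^2/8)^{-2}$ from \eqref{2.6a} lets you write down the explicit barrier $\phi(\tilde z)=1-c|\tilde z|^{-2}$, and the substitution $\tilde v_{j,n}=\phi w$ reduces to an operator with non-negative zeroth-order term, so the maximum principle applies immediately with comparison constant $\max\phi/\min\phi\le 2$. Your argument avoids both the Kelvin transform and the auxiliary boundary-value problem, at the cost of needing the barrier ansatz; the paper's approach is a touch more mechanical but reduces everything to harmonic functions. One cosmetic point: to keep the rescaled annulus inside the domain where \eqref{2.6a} is stated, it is cleanest to take the outer boundary at $\partial B_R(x_{j,n})$ (or $\partial B_{2R}(x_{j,n})$, as the paper does) rather than $\partial B_R(\kappa_j)$, and to invoke \eqref{new3.4} with a slightly smaller radius; this is a trivial adjustment.
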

%\begin{remark}\label{newr3.9}
%The asymptotics of eigenvalues and eigenfunctions are to be controlled precisely by Proposition \ref{newp3.8}.  It shows also that $\mu_\infty$ is an eigenvalue of the problem \eqref{new3.2} (see Remark \ref{newr3.2}).
%\end{remark}
%
\begin{proof}%[Proof of Proposition \ref{newp3.8}]
It is enough to show
\begin{equation}\label{new3.12}
v_n\tend 0\quad\text{uniformly in $\ball{R}{\kappa_j}$}
\end{equation}
if $V_j\equiv 0$.  In fact, if \eqref{new3.12} holds for all $j\in\{1,\cdots,m\}$ then we obtain
$$
v_n\tend 0\quad\text{uniformly in $\clo{\Omega}$}
$$
from \eqref{new3.4}, which contradicts the hypothesis $\norm{v_n}{\Le{\infty}{\Omega}}=1$.

Property \eqref{new3.12} is proven {\rosso{ using the same  argument}}  in \cite[(5.2)]{GOS11}.  If \eqref{new3.12} does not hold, we have
$$
\limsup_{n\to +\infty}\max_{x\in B_R(\ka_j)}|\vn (x)|=\limsup_{n\to +\infty}\max_{x\in B_{2R}(x_{j,n})}|\vn(x)|=M>0
$$
since $\ball{R}{\ka_j}\Subset\ball{2R}{x_{j,n}}$ for $n\gg 1$ and \eqref{new3.4}.  Let $\tilde{z}_{j,n}\in B_{\frac {2R}{\d_{j,n}}}(0)$ be the points such that
$$
\til{v}_{j,n}(\tilde{z}_{j,n})=\sup_{B_{\frac {2R}{\d_{j,n}}}(0)}|\til{v}_{j,n}(x)|.
$$
Up to a sub-sequences (denoted by the same symbol), it holds that
$$
|\tv_{j,n}(\tilde{z}_{j,n} )|=\max_{x\in B_{2R}(x_{j,n})}|\vn(x)|\to M, \quad |\tilde{z}_{j,n}|\to +\infty
$$
by $V_j\equiv 0$.

Here we take the Kelvin transform of $\tu_{j,n}$  and $\tv_{j,n}$, i.e.,
$$
\hat{u}_{j,n}(x)=\tu_{j,n}\Big(\frac{x}{|x|^2}\Big), \quad \hat{v}_{j,n}(x)=\tv_{j,n}\Big(\frac{x}{|x|^2}\Big)
$$
which satisfy
$$
-\Delta \hat{v}_{j,n}=\frac{\mn}{|\hat{x}|^4}e^{\hat{u}_{j,n}}\hat{v}_{j,n}\quad \hbox{ in }B_{\frac{\d_{j,n}}{2R}}(0)^c.
$$
Here we have
\begin{equation}\label{conv-z}
\hat{z}_{j,n}\in B_1(0)\setminus\overline{B_{\frac{\delta_{j,n}}{2R}}(0)}, \quad \hat{z}_{j,n}\to 0, \quad \hat{v}_{j,n}(\hat{z}_{j,n})=\tv_{j,n}(\tilde{z}_{j,n} )\to M.
\end{equation}
for $\hat{z}_{j,n}=\frac{\tilde{z}_{j,n}}{|\tilde{z}_{j,n}|^2}$. Let $w_{j,n}\in H^1_0(B_1(0))$ be such that
$$
\left\{ \begin{array}{ll}
-\Delta w_{j,n}=f_{j,n} & \hbox{ in }B_1(0)\\
w_{j,n}=0 & \hbox{ on }\de B_1(0)
\end{array}\right.
$$
where
$$
f_{j,n}:=\left\{ \begin{array}{ll}
\frac{\mn}{|\hat{x}|^4}e^{\hat{u}_{j,n}}\hat{v}_{j,n}
 & \hbox{ in }B_1(0)\setminus \overline{B_{\frac {\d_{j,n}}{2R}}(0)}\\
0 & \hbox{ in } B_{\frac {\d_{j,n}}{2R}}(0).
\end{array}\right.$$

We have, from \eqref{2.6a}
$$
0\leq  \frac{\mn}{|\hat{x}|^4}e^{\hat{u}_{j,n}}\leq C<\infty
$$
where $C$ is a constant independent on $n$.  We have, on the other hand, $ \hat{v}_{j,n}(\hat{x})=\tv_{j,n}( \til{x})\to 0$ for every $\hat{x}\in B_R(0)\setminus\{0\}$ by $V_j\equiv 0$.  Therefore, it holds that
$$
\nor f_{j,n}\nor_{L^p(B_1(0))}\to 0 \quad \hbox{ for each }p\in [1,+\infty)
$$
by the dominated convergence theorem which implies
$$
w_{j,n}\to 0\hbox{ uniformly in }B_1(0)
$$
from the elliptic theory.

We turn to the difference $\hat{v}_{j,n}-w_{j,n}$ which is harmonic in $B_1(0)\setminus\overline{B_{\frac {\d_{j,n}}{2R}}(0)}$.  Then the maximum principle guarantees
\begin{align*}
&\nor \hat{v}_{j,n}-w_{j,n}\nor_{L^{\infty}\big( B_1(0)\setminus\overline{B_{\frac {\d_{j,n}}{2R}}(0)}\big)}\\
&\leq \nor\hat{v}_{j,n}-w_{j,n}\nor_{L^{\infty}\big(\de B_1(0)\big)}+\nor\hat{v}_{j,n}-w_{j,n}\nor_{L^{\infty}\big(\de B_{\frac {\d_{j,n}}{2R}}(0)\big)}\\
&\leq \nor\hat{v}_{j,n}\nor_{L^{\infty}\big(\de B_1(0)\big)}+\nor\hat{v}_{j,n}\nor_{L^{\infty}\big(\de B_{\frac {\d_{j,n}}{2R}}(0)\big)}+\nor w_{j,n}\nor_{L^{\infty}\big(\de B_{\frac {\d_{j,n}}{2R}}(0)\big)}.
\end{align*}
Here, it follows from $V_j\equiv 0$ that
\begin{align*}
\nor\hat{v}_{j,n}\nor_{L^{\infty}\big(\de B_1(0)\big)}&=\nor \tv_{j,n}\nor_{L^{\infty}\big(\de B_1(0)\big)}=o(1)
\end{align*}
and from \eqref{new3.4}
\begin{align*}
\nor\hat{v}_{j,n}\nor_{L^{\infty}\big(\de B_{\frac {\d_{j,n}}{2R}}(0)\big)}&=\nor \tv_{j,n}\nor_{L^{\infty}\big(\de B_{\frac {2R}{\d_{j,n}}} (0)\big)}=\nor v_{n}\nor_{L^{\infty}\big(\de B_{2R} (x_{j,n})\big)}=o(1).
\end{align*}
Hence we obtain
\begin{align*}
\nor \hat{v}_{j,n}\nor_{L^{\infty}\big( B_1(0)\setminus\overline{B_{\frac {\d_{j,n}}{2R}}(0)}\big)}&\leq \nor w_{j,n}\nor_{L^{\infty}\big( B_1(0)\big)}+\nor \hat{v}_{j,n}-w_{j,n}\nor_ {L^{\infty}\big( B_1(0)\setminus\overline{B_{\frac {\d_{j,n}}{2R}}(0)}\big)}\\
&=o(1)
\end{align*}
which contradicts \eqref{conv-z}.
\end{proof}

{\rosso{\begin{corollary}\label{corollary-2.12-bis}
We have that $\mu_\infty$ is an eigenvalue of \eqref{prob-lim}.
\end{corollary}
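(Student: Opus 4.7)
The corollary is a direct consequence of the two immediately preceding results, so the plan is essentially to synthesize them rather than to introduce any new argument.

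First I would recall what Proposition \ref{newp3.1} gives: along the chosen sub-sequence, there exist limit functions $V_1,\dots,V_m\in C^{2,\alpha}_{\mathrm{loc}}(\R^2)$ such that $\tv_{j,n}\to V_j$ in $C^{2,\alpha}_{\mathrm{loc}}(\R^2)$, each $V_j$ satisfies the linearized equation $-\Delta V_j=\mu_\infty e^U V_j$ in $\R^2$, and $\|V_j\|_{L^\infty(\R^2)}\leq 1$. As Remark \ref{newr3.2} warns, this by itself is not enough to call $\mu_\infty$ an eigenvalue, because \emph{a priori} every $V_j$ could vanish identically, in which case the limit identity carries no spectral information about $\mu_\infty$.

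The role of Proposition \ref{newp3.8} is exactly to rule this out: it guarantees the existence of an index $j_0\in\{1,\dots,m\}$ for which $V_{j_0}\not\equiv 0$. Combining these two facts, $V_{j_0}$ is then a nontrivial bounded solution of $-\Delta V=\mu_\infty e^U V$ in $\R^2$, which, by the very definition adopted in \eqref{new3.2}, means that $\mu_\infty$ belongs to the spectrum of the limit eigenvalue problem on $\R^2$.

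Since the proof amounts only to chaining Proposition \ref{newp3.1} (existence of a bounded limit solution with prescribed coefficient) with Proposition \ref{newp3.8} (nontriviality of at least one component), I do not anticipate any genuine obstacle here; all the analytic difficulty, in particular the Kelvin-transform/maximum-principle blow-down argument, has already been absorbed into Proposition \ref{newp3.8}. The corollary should therefore be recorded in one or two sentences, serving mainly as a conceptual marker that allows us to invoke the classification of eigenvalues $\alpha_k=k(k+1)/2$ of \eqref{new3.2} (from \cite[Theorem 11.1]{GG09}) when later restricting attention to the cases $\mu_\infty=0$ and $\mu_\infty=1$.
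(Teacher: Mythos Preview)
Your proposal is correct and matches the paper's intent: the corollary is stated without proof immediately after Proposition~\ref{newp3.8}, and your argument---combining Proposition~\ref{newp3.1} with Proposition~\ref{newp3.8} to obtain a nontrivial bounded solution of \eqref{new3.2}---is exactly the synthesis the paper leaves implicit.
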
}}

\sottosezione{The case of $\mu_\infty=0$ and $\mu_\infty=1$}
In this section we consider the cases $\mu_\infty=0$ and $\mu_\infty=1$ and we improve the estimate of the previous section. First we start with a sharp estimate for $\mu_\infty=0$.

\begin{proposition}\label{newp3.10}
If $\mu_\infty=0$ it holds that
{\rosso{
\begin{equation}\label{*}
\gamma^0_{j,n}=8\pi c_j+o(1),
\end{equation}
and}}
$$
\mn=-\frac{1}{2\log\ln}+o\left(\frac{1}{\log\ln}\right).
$$
\end{proposition}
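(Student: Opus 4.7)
My approach splits naturally into two parts. For the claim \eqref{*}, I would rescale directly: changing variables via $y = \delta_{j,n}\tilde x + x_{j,n}$ gives
\begin{equation*}
\gamma^0_{j,n} = \int_{B_{R/\delta_{j,n}}(0)} e^{\tilde u_{j,n}}\tilde v_{j,n}\, d\tilde x.
\end{equation*}
The uniform bound \eqref{2.6a} yields an integrable dominating function $Ce^U$, while by Proposition \ref{newp3.1} and Remark \ref{remark-2.4}(i) we have $\tilde v_{j,n} \to c_j$ in $C^{2,\alpha}_{loc}(\R^2)$. Dominated convergence together with the standard computation $\int_{\R^2} e^U\, d\tilde x = 8\pi$ then gives $\gamma^0_{j,n} = 8\pi c_j + o(1)$.

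For the sharp expansion of $\mu_n$, I would exploit the integral identity \eqref{stima}, in which the factor $u_n(x_{j,n}) - 1/\mu_n$ appears explicitly. The remaining integral term on the right-hand side rescales as
\begin{equation*}
\lambda_n \int_{B_R(x_{j,n})} e^{u_n} v_n(u_n - u_n(x_{j,n}))\, dx = \int_{B_{R/\delta_{j,n}}(0)} e^{\tilde u_{j,n}} \tilde v_{j,n}\tilde u_{j,n}\, d\tilde x,
\end{equation*}
and, again by dominated convergence using $|\tilde u_{j,n}| \leq |U| + C$ with $e^U|U|$ integrable, this tends to $c_j \int_{\R^2} e^U U\, d\tilde x = -16\pi c_j = O(1)$, which is precisely the limit in \eqref{2.14+1}. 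For the boundary integral on the left of \eqref{stima}, I would invoke the representation in Proposition \ref{newp3.3} together with the bound $\boldsymbol{\gamma}^1_{k,n} = O(\delta_{k,n}) = o(1)$ obtained by the same rescaling as in \eqref{new3.4.5} (which uses only $\|v_n\|_\infty \leq 1$ and is therefore valid regardless of $\mu_\infty$). This yields $v_n/\mu_n \to 8\pi \sum_k c_k G(\cdot, \kappa_k)$ in $C^1(\partial B_R(\kappa_j))$; combined with the $C^2_{loc}(\overline{\Omega}\setminus\S)$ convergence in \eqref{2.2}, the boundary integral is $O(1)$.

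Now I would use Proposition \ref{newp3.8} to choose an index $j$ with $c_j \neq 0$. Substituting \eqref{2.6c} and the preceding estimates into \eqref{stima} gives
\begin{equation*}
O(1) = \Bigl(-2\log\lambda_n - \tfrac{1}{\mu_n} + O(1)\Bigr)\bigl(8\pi c_j + o(1)\bigr) + O(1),
\end{equation*}
from which $1/\mu_n = -2\log\lambda_n + O(1)$, and inverting yields the asserted asymptotic $\mu_n = -\frac{1}{2\log\lambda_n} + o(1/\log\lambda_n)$.

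The main obstacle is bookkeeping of the error terms and, in particular, establishing the $O(1)$ bound on the boundary integral in \eqref{stima} without the a priori control on $\gamma^0_{k,n}$ and $\boldsymbol{\gamma}^1_{k,n}$ supplied by Proposition \ref{newp3.6} (which was derived under $\mu_\infty \neq 0$). However, the direct control $\gamma^0_{k,n} = 8\pi c_k + o(1)$ proved in the first part, combined with the elementary rescaling bound on $\boldsymbol{\gamma}^1_{k,n}$ described above, makes Proposition \ref{newp3.3} applicable in the present regime and so resolves this point.
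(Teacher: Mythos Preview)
Your proof is correct and follows essentially the same route as the paper. Your concern about the boundary integral is unnecessary: the paper simply invokes \eqref{2.13} from Remark~\ref{newr3.5}, which gives $v_n/\mu_n = O(1)$ in $C^1$ away from the blow-up points directly from the trivial bounds $\gamma^0_{k,n}=O(1)$ and $\bm{\gamma^1}_{k,n}=O(1)$ (valid for every $\mu_\infty$), so no appeal to Proposition~\ref{newp3.6} is needed and there is no obstacle to overcome.
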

\begin{proof}
{\rosso{It holds that
$$\gamma^0_{j,n}=\int_{B_R(x_{j,n})}\l_n e^{u_n}v_n= \int_{B_{\frac R{\d_{j,n}}}(0) }e^{\tilde u_{j,n}}\tilde v_{j,n}\ra c_j \int_{\R^2}e^U=8\pi c_j.$$}}
We repeat the argument used for the proof of Proposition \ref{newp3.6}, using (\ref{stima}).  First, \[ \int_{\partial B_R(x_{j,n})}\left(\frac{\partial u_n}{\partial \nu}\frac{v_n}{\mu_n}-u_n\frac{\partial}{\partial\nu}\frac{v_n}{\mu_n}\right)=O(1) \]
holds by (\ref{2.2}) and (\ref{2.13}).  Next, the limit of (\ref{2.14+1}) is equal to $\int_{\R^2}e^UV_jU=c_j\int_{\R^2}e^UU$ in this case. Hence it follows that, {\rosso{using \eqref{*}}}
$$
O(1)=\left\{u_n(x_{j,n})-\frac{1}{\mn}\right\}\left\{8\pi c_j+o(1)\right\}+c_j\int_{\R^2}e^UU+o(1) .
$$
%from Remark \ref{newr3.7}.

Let $j\in\{1,\cdots,m\}$ be such that $c_j\not=0$, assured by Proposition \ref{newp3.8}. Then the above relation implies
$$
u_n(x_{j,n})-\frac{1}{\mn}=O(1)
$$
and consequently, the conclusion by \eqref{2.6c}.
\end{proof}

Now we consider the case of $\mu_\infty=1$. The proof of the asymptotic behavior of $\mu_n$ for $\mu_\infty=1$ will be performed in several steps.

\begin{proposition}
If $\mu_\infty=1$ then it holds that
\begin{equation}
\gamma^0_{j,n}=\frac{8\pi b_j+o(1)}{\log\lambda_n},
 \label{new3.6}
\end{equation}
\begin{equation}\label{new3.5}
\frac{\bm{\gamma^1}_{j,n}}{\delta_{j,n}}= -8\pi\bm{a}_j+o(1),
\end{equation}
and
\begin{align}
v_n&=\sum_{j=1}^m\left\{\frac{8\pi b_j+o(1)}{\log\ln}G(x,x_{j,n})-8\pi d_j\lambda^\frac{1}{2}\bm{a}_j\cdot\nabla_y G(x,x_{j,n})\right\}+o\left(\lambda^\frac{1}{2}\right)\nonumber\\
&\qquad\qquad\text{in $\Co{1}{\clo{\Omega}\backslash\cup_{j=1}^m\ball{R}{\kappa_j}}$}.
\label{new3.11}
\end{align}
 \label{prop:2.9}
\end{proposition}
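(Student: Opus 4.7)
The plan is to refine Proposition~\ref{newp3.6} in the case $\mu_\infty=1$ by computing precise limits of two integrals against the profile. By Remark~\ref{remark-2.4}(ii), the limit profile is $V_j=\bm{a}_j\cdot\nabla U+b_j\bar U$, where $\bar U=\tilde x\cdot\nabla U+2$ is radial. I would first establish \eqref{new3.5}, then \eqref{new3.6}, and finally combine them with Proposition~\ref{newp3.3} to obtain \eqref{new3.11}.

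For \eqref{new3.5}, rescale via $y=\delta_{j,n}\tilde x+x_{j,n}$ to get
\[
\frac{\gamma^{1,\alpha}_{j,n}}{\delta_{j,n}}=\int_{B_{R/\delta_{j,n}}(0)}\tilde x_\alpha\,e^{\tilde u_{j,n}}\,\tilde v_{j,n}\,d\tilde x.
\]
The pointwise bound \eqref{2.6a} supplies a uniform dominating function $C|\tilde x|(1+|\tilde x|^2)^{-2}\in L^1(\R^2)$, while $\tilde v_{j,n}\to V_j$ locally uniformly by Proposition~\ref{newp3.1}, so dominated convergence gives the limit $\int_{\R^2}\tilde x_\alpha e^U V_j\,d\tilde x$. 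The $b_j\bar U$-contribution vanishes by radial symmetry; for the $\bm a_j$-piece, write $e^U\partial_\beta U=\partial_\beta e^U$ and integrate by parts on $\R^2$ (boundary terms vanish thanks to the quartic decay of $e^U$), reducing to $-\delta_{\alpha\beta}\int_{\R^2}e^U=-8\pi\delta_{\alpha\beta}$, which yields $-8\pi(\bm a_j)_\alpha$.

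For \eqref{new3.6}, start from identity \eqref{stima} established inside the proof of Proposition~\ref{newp3.6}. Its left-hand side is $o(1)$ by \eqref{2.18b}, valid since $\mu_\infty=1\neq 0$. Rescaling the last integral on the right and applying dominated convergence again yields $\int_{\R^2}e^U V_j U\,d\tilde x$. The $\bm a_j$-piece vanishes since
\[
\int_{\R^2}U\partial_\beta e^U=-\int_{\R^2}e^U\partial_\beta U=-\int_{\R^2}\partial_\beta e^U=0,
\]
whereas the $b_j$-piece reduces, via the substitution $t=1+|\tilde x|^2/8$ applied to $U=-2\log(1+|\tilde x|^2/8)$ and $e^U=(1+|\tilde x|^2/8)^{-2}$, to
\[
\int_{\R^2}e^U U\bar U=\int_{\R^2}e^U U\,\tilde x\cdot\nabla U+2\int_{\R^2}e^U U=48\pi-32\pi=16\pi.
\]
Plugging back yields $\bigl(u_n(x_{j,n})-1/\mu_n\bigr)\gamma^0_{j,n}=-16\pi b_j+o(1)$, and dividing by $u_n(x_{j,n})-1/\mu_n=-2\log\lambda_n+O(1)$ (from \eqref{2.6c} together with $\mu_n\to 1$) delivers \eqref{new3.6}.

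Expansion \eqref{new3.11} then follows by inserting the two asymptotics just obtained, together with $\delta_{j,n}=d_j\lambda_n^{1/2}+o(\lambda_n^{1/2})$ from \eqref{2.6b} and $\mu_n=1+o(1)$, into Proposition~\ref{newp3.3}; the $C^1$-upgrade follows identically from Remark~\ref{newr3.4} by applying the same substitutions to $\nabla_xG(x,x_{j,n})$. The main obstacle is the explicit evaluation $\int_{\R^2}e^U U\bar U=16\pi$: any sign error in either of its constituent integrals propagates into an incorrect coefficient in \eqref{new3.6}, so careful bookkeeping of the two integration-by-parts steps, with boundary terms at infinity controlled by $e^U\sim|\tilde x|^{-4}$ and $U\sim-4\log|\tilde x|$, is essential.
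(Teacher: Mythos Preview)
Your proof is correct and follows essentially the same approach as the paper: identity \eqref{stima} combined with \eqref{2.18b} for \eqref{new3.6}, rescaling and dominated convergence for \eqref{new3.5}, and substitution into Proposition~\ref{newp3.3} (and Remark~\ref{newr3.4}) together with \eqref{2.6b} for \eqref{new3.11}. The only cosmetic difference is that the paper simply cites the appendix (Lemma~\ref{l6.2}) for $\int_{\R^2}e^UUU_\alpha=0$ and $\int_{\R^2}e^UU\bar U=16\pi$, whereas you compute these integrals by hand via integration by parts and the substitution $t=1+|\tilde x|^2/8$.
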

\begin{proof}
By \eqref{stima} and  \eqref{2.18b} we get
$$
\gamma_{j,n}^0=\frac{\int_{\R^2}e^UV_jU+o(1)}{2\log \ln}.
$$
Since $\mu_\infty=1$, we obtain
$$
\int_{\R^2}e^UV_jU=\int_{\R^2}e^U\left(\bm{a}_j\cdot\nabla U+b_j\clo{U}\right)U=16\pi b_j
$$
by Lemma \ref{l6.2}, {\rosso{proving \eqref{new3.6}. \\
Similarly to (\ref{new3.4.5}) we have, from Remark \ref{remark-2.4}
$$
\frac{\bm{\gamma^1}_{j,n}}{\delta_{j,n}}\ra \int_{\R^2}\til{y}e^U\left\{\bm{a}_j\cdot\nabla U+b_j\clo{U}\right\}=-8\pi\bm{a}_j
$$
which proves \eqref{new3.5}.\\
Finally \eqref{new3.11} follows by \eqref{2.6b}, \eqref{new3.6} and \eqref{new3.5}.

}}
\end{proof}

The next proposition is {\rosso{a refinement of \eqref{new3.5.5}.}}
It relies on a bi-linear form of the Rellich-Pohozaev identity described below. We omit the elementary proof of this identity (see \cite[Proposition 5.5]{O12} for details).
\begin{proposition}
\label{bi-po-prop}
For every $p\in\R^2$, $R>0$, and $f$, $g\in\Co{2}{\clo{\ball{R}{p}}}$, it holds that
\begin{align}
&\int_{\ball{R}{p}}\left\{[(x-p)\cdot\nabla f]\lap g+\lap f [(x-p)\cdot\nabla g]\right\}\nonumber\\
&\qquad=R\int_{\del\ball{R}{p}}\left(2\frac{\del f}{\del \nu}\frac{\del g}{\del \nu}-\nabla f\cdot\nabla g\right).
\label{bi-po}
\end{align}
\end{proposition}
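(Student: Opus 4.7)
The plan is to prove the identity by a direct integration by parts in which translation invariance immediately lets me take $p=0$. I would handle the two terms $\int_{B_R(0)}(x\cdot\nabla f)\Delta g$ and $\int_{B_R(0)}(x\cdot\nabla g)\Delta f$ separately, and only combine them at the end to exploit the symmetry between $f$ and $g$.

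First I would apply the divergence theorem to each term individually. For the first, writing $(x\cdot\nabla f)\Delta g = \mathrm{div}\bigl((x\cdot\nabla f)\nabla g\bigr)-\nabla(x\cdot\nabla f)\cdot\nabla g$, and using the pointwise identity $\nabla(x\cdot\nabla f) = \nabla f + (x\cdot\nabla)\nabla f$ (which in index notation is $\del_i(x_k\del_k f)=\del_i f+x_k\del_i\del_k f$), one obtains
\begin{equation*}
\int_{B_R(0)}(x\cdot\nabla f)\Delta g = \int_{\del B_R(0)}(x\cdot\nabla f)\,\frac{\del g}{\del \nu}-\int_{B_R(0)}\nabla f\cdot\nabla g-\int_{B_R(0)} x_k\,\del_k\del_i f\,\del_i g.
\end{equation*}
An analogous identity holds for $\int_{B_R(0)}(x\cdot\nabla g)\Delta f$ with the roles of $f,g$ swapped.

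Adding the two identities, the last two bulk integrands combine into $x_k\del_k(\del_i f\,\del_i g)=x\cdot\nabla(\nabla f\cdot\nabla g)$, plus an extra $2\,\nabla f\cdot\nabla g$ coming from the products of first derivatives. One more divergence-theorem step, applied to $\mathrm{div}(x\,\nabla f\cdot\nabla g)=2\,\nabla f\cdot\nabla g+x\cdot\nabla(\nabla f\cdot\nabla g)$, converts that bulk integral into a boundary term. The factors of $2$ from the two applications cancel the $2\int\nabla f\cdot\nabla g$ contributions, leaving only boundary integrals.

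Finally, I would simplify the boundary using that on $\del B_R(0)$ the outward normal is $\nu=x/R$, so $x\cdot\nabla h = R\,\partial_\nu h$ for any smooth $h$ and $x\cdot\nu=R$. This turns $\int_{\del B_R(0)}(x\cdot\nabla f)\partial_\nu g+(x\cdot\nabla g)\partial_\nu f-\int_{\del B_R(0)}(x\cdot\nu)\nabla f\cdot\nabla g$ into exactly $R\int_{\del B_R(0)}\bigl(2\,\partial_\nu f\,\partial_\nu g-\nabla f\cdot\nabla g\bigr)$, which is the desired identity. There is no real obstacle — the only mild care needed is the bookkeeping of the mixed second-derivative terms, where keeping track of whether the derivative falls on $f$ or $g$ is essential to seeing the cancellation after symmetrization; this is precisely the reason the authors perform a \emph{bi-linear} Pohozaev rather than invoking the classical one.
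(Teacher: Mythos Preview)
Your argument is correct: the two integrations by parts and the use of $\mathrm{div}(x\,\nabla f\cdot\nabla g)=2\,\nabla f\cdot\nabla g+x\cdot\nabla(\nabla f\cdot\nabla g)$ (with $\mathrm{div}\,x=2$ in $\R^2$) make the bulk terms cancel exactly as you describe, and the boundary simplification via $\nu=x/R$ is clean. The paper itself omits the proof of this proposition, calling it elementary and referring to \cite[Proposition 5.5]{O12}, so there is no in-paper argument to compare against; your direct computation is the standard route and is entirely adequate.
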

\begin{proposition}\label{newp3.14}
If $\mu_\infty=1$ it holds that
\begin{equation}\label{gamma-0-jn}
\gamma_{j,n}^0=(1-\mn)\left\{\frac{16\pi}{3}b_j+o(1)\right\}+o\left(\ln^{\frac{1}{2}}\right).
\end{equation}
\end{proposition}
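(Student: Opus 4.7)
My plan is to exploit the fact that the auxiliary function $\bar u_{j,n}(x) := (x-x_{j,n})\cdot \nabla u_n(x) + 2$ satisfies the \emph{same} linearized equation as $v_n$ but with eigenvalue $1$ in place of $\mu_n$: differentiating $-\Delta u_n = \lambda_n e^{u_n}$ (via $\Delta((x-p)\cdot\nabla f) = 2\Delta f + (x-p)\cdot\nabla\Delta f$) yields $-\Delta\bar u_{j,n} = \lambda_n e^{u_n}\bar u_{j,n}$ on $\Omega$. Applying Green's second identity -- the degenerate form of the bilinear Pohozaev identity of Proposition~\ref{bi-po-prop} -- to the pair $(\bar u_{j,n}, v_n)$ on $B_R(x_{j,n})$ produces the master identity
\[
(1-\mu_n)\lambda_n\int_{B_R(x_{j,n})}e^{u_n}\bar u_{j,n}v_n\,dx = \int_{\partial B_R(x_{j,n})}\bigl[\bar u_{j,n}(v_n)_\nu - v_n(\bar u_{j,n})_\nu\bigr]\,d\sigma.
\]

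I would next estimate the bulk integral by the rescaling $x = \delta_{j,n}\tilde x + x_{j,n}$: the normalization \eqref{2.5} absorbs the $\lambda_n e^{u_n(x_{j,n})}\delta_{j,n}^2$ factor, and $\bar u_{j,n}$ transforms into $\tilde x\cdot\nabla\tilde u_{j,n}+2\to \bar U$ in $C^1_{loc}$. Using $\tilde v_{j,n}\to V_j = \bm{a}_j\cdot\nabla U+b_j\bar U$ in $C^2_{loc}$ together with the pointwise bound \eqref{2.6a}, a dominated convergence argument gives $\lambda_n\int e^{u_n}\bar u_{j,n}v_n\to \int_{\R^2}e^U\bar U V_j$. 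The $\bm{a}_j$-part vanishes by the radial symmetry of $U$, while $\int_{\R^2}e^U\bar U^2 = \tfrac{32\pi}{3}$ by a direct computation (or Lemma~\ref{l6.2}); hence the bulk integral equals $\tfrac{32\pi}{3}b_j + o(1)$.

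The heart of the argument is showing that the boundary integral equals $2\mu_n\gamma^0_{j,n}+o(\lambda_n^{1/2})$. I would expand $u_n$ and $v_n$ on $\partial B_R(x_{j,n})$ using the $C^2$-convergence $u_n\to 8\pi\sum_l G(\cdot,\kappa_l)$ from \eqref{2.2} and the $C^1$-expansion $v_n/\mu_n = \sum_k\gamma^0_{k,n}G(\cdot,x_{k,n}) + \sum_k\bm{\gamma^1}_{k,n}\cdot\nabla_yG(\cdot,x_{k,n}) + o(\lambda_n^{1/2})$ of Proposition~\ref{newp3.3} (together with its gradient analogue in Remark~\ref{newr3.4}). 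The principal contribution comes from pairing the singular piece $-\tfrac{1}{2\pi R}$ of $(G(\cdot,x_{j,n}))_\nu$ appearing in $(v_n)_\nu$ against $\bar u_{j,n}\to -2$: a crucial cancellation makes $(\bar u_{j,n})_\nu = (u_n)_\nu + R(u_n)_{\nu\nu}$ itself bounded, as the singular $-\tfrac{4}{R}$ in $(u_n)_\nu$ is annihilated by the singular $+\tfrac{4}{R}$ in $R(u_n)_{\nu\nu}$. Direct integration over the circumference $2\pi R$ then yields exactly $2\mu_n\gamma^0_{j,n}$. All remaining cross-terms -- those involving regular parts of $G$, the other blow-up points $\kappa_l\ (l\ne j)$, and the gradient corrections $\bm{\gamma^1}_{k,n}=O(\lambda_n^{1/2})$ -- collect into linear combinations of $\nabla_1 K(\kappa_j,\kappa_j)+\sum_{l\ne j}\nabla_1G(\kappa_j,\kappa_l)$, which vanish identically by the critical point condition $\nabla H^m(\kappa_1,\ldots,\kappa_m)=0$ of \eqref{conditionS}, leaving only $o(\lambda_n^{1/2})$.

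Combining the two sides, $(1-\mu_n)\{\tfrac{32\pi}{3}b_j+o(1)\} = 2\mu_n\gamma^0_{j,n}+o(\lambda_n^{1/2})$, and dividing by $2\mu_n$ (using $\mu_n\to 1$) gives the claim. The main obstacle is the third step, where one must simultaneously extract the clean ``$2$''-coefficient from the singular-singular interaction on $\partial B_R(x_{j,n})$ and invoke the stationarity of $H^m$ to cancel the residual $O(1/\log\lambda_n)$ cross-terms that would otherwise dominate the target error $o(\lambda_n^{1/2})$.
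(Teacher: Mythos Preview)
Your proof is correct and takes a genuinely different route from the paper's.

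The paper applies the bilinear Pohozaev identity of Proposition~\ref{bi-po-prop} to the pair $(u_n,v_n)$, so that only \emph{first} derivatives of $u_n$ and $v_n$ appear on $\partial B_R(x_{j,n})$. After writing $\nabla u_n$ and $\nabla(v_n/\mu_n)$ as (singular radial part) $+$ (harmonic $k_{j,n}$, $h_{j,n}$) $+\,o(\lambda_n^{1/2})$, the $k$--$h$ cross terms are killed by a \emph{second} use of the Pohozaev identity for harmonic functions, the $(\bm a_j\cdot\nu)(k_{j,n})_\nu$ terms cancel between the two boundary integrals, and the surviving $\bm a_j\cdot\nabla k_{j,n}(x_{j,n})$ vanishes by \eqref{conditionS}. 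This leaves $\gamma^0_{j,n}\bigl(1-\tfrac{\sigma^0_{j,n}\mu_n}{4\pi}\bigr)=-(1-\mu_n)\{\tfrac{16\pi}{3}b_j+o(1)\}+o(\lambda_n^{1/2})$, and one divides by the bracket, which tends to $-1$.

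You instead pair $v_n$ with $\bar u_{j,n}=(x-x_{j,n})\cdot\nabla u_n+2$ via Green's second identity, using that $\bar u_{j,n}$ solves the linearized equation with eigenvalue $1$. Your key structural observation --- that on $\partial B_R$ the singular part of $\bar u_{j,n}$ is a \emph{constant} ($c_n=2-\sigma^0_{j,n}/2\pi\to -2$) while $(\bar u_{j,n})_\nu$ carries no singular part at all --- makes the boundary bookkeeping shorter: writing $\bar u_{j,n}=c_n+\psi_{j,n}+o(\lambda_n^{1/2})$ with $\psi_{j,n}=(x-x_{j,n})\cdot\nabla k_{j,n}$ harmonic, the $\psi_{j,n}$--$h_{j,n}$ cross terms cancel by Green's identity for harmonic functions, and the dipole--$\psi_{j,n}$ cross term collapses to $-\bm\gamma^1_{j,n}\cdot\nabla k_{j,n}(x_{j,n})=o(\lambda_n^{1/2})$ by \eqref{conditionS}. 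The price is that you need the $C^2_{\mathrm{loc}}$ convergence of $u_n$ from \eqref{2.2} (the paper's route needs only $C^1$), but this is available.

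One small imprecision: the boundary integral is not literally $2\mu_n\gamma^0_{j,n}+o(\lambda_n^{1/2})$ but $-\mu_n c_n\gamma^0_{j,n}+o(\lambda_n^{1/2})$, since a priori you only know $c_n\to-2$ (no rate for $\sigma^0_{j,n}-8\pi$ is established in the paper). This is harmless: as in the paper's final step, you divide by the coefficient of $\gamma^0_{j,n}$, which tends to $2$, and the conclusion follows unchanged.
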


\begin{proof}%[Proof of Proposition \ref{newp3.14}]
Putting $p=x_{j,n}$, $f=u_n$, and $g=v_n$ on the left-hand side of \eqref{bi-po}, we have
\begin{align*}
&\int_{\ball{R}{x_{j,n}}}\left\{[(x-x_{j,n})\cdot\nabla u_n]\lap v_n+\lap u_n[(x-x_{j,n})\cdot\nabla v_n]\right\}\\
&=\int_{\ball{R}{x_{j,n}}}\left\{[(x-x_{j,n})\cdot\nabla u_n](-\lambda_n\mn e^{u_n}v_n)-\lambda_n e^{u_n} [(x-x_{j,n})\cdot\nabla v_n]\right\}\\
&=-\int_{\ball{R}{x_{j,n}}}(x-x_{j,n})\cdot\nabla\left(\lambda_n e^{u_n} v_n\right)\\
&\qquad+(1-\mn)\int_{\ball{R}{x_{j,n}}}[(x-x_{j,n})\cdot\nabla u_n](\lambda_n e^{u_n}v_n)\\
&=-\int_{\partial\ball{R}{x_{j,n}}}[(x-x_{j,n})\cdot\nu]\lambda_n e^{u_n} v_n+2\int_{\ball{R}{x_{j,n}}}\lambda_n e^{u_n} v_n\\
&\qquad+(1-\mn)\int_{\ball{R}{x_{j,n}}}[(x-x_{j,n})\cdot\nabla u_n](\lambda_n e^{u_n}v_n)\quad \quad{\rosso{\text{ and by \eqref{new3.11}}}}\\
&=2\gamma^0_{j,n}+(1-\mn)\int_{\ball{R}{x_{j,n}}}[(x-x_{j,n})\cdot\nabla u_n](\lambda_n e^{u_n}v_n)+o(\lambda_n).
\end{align*}
Here, Lemma \ref{newl3.11} implies
\begin{align*}
&\int_{\ball{R}{x_{j,n}}}[(x-x_{j,n})\cdot\nabla u_n](\lambda_n e^{u_n}v_n)=\int_{\ball{\frac{R}{\delta_{j,n}}}{0}}\left(\til{x}\cdot\nabla_{\til{x}} \til{u}_{j,n}\right)e^{\til{u}_{j,n}}\til{v}_{j,n}\\
&\tend\int_{\R^2}\left(\til{x}\cdot\nabla U\right)e^UV_j=\int_{\R^2}e^U\clo{U}\left(\bm{a}_j\cdot\nabla U+b_j\clo{U}\right)-2\int_{\R^2}e^U(\bm{a}_j\cdot\nabla U+b_j\clo{U})\\
&\qquad=\frac{32\pi}{3}b_j,
\end{align*}
(see Lemma \ref{l6.2} for the last integration). Therefore, by (\ref{bi-po}) we obtain
\begin{align}
\gamma^0_{j,n}&=-(1-\mn)\left\{\frac{16\pi}{3}b_j+o(1)\right\}\nonumber\\
&+\frac{R\mn}{2}\int_{\partial\ball{R}{x_{j,n}}}\left(2\frac{\del u_n}{\del \nu}\frac{\del}{\del \nu}\frac{\vn}{\mn}-\nabla u_n\cdot\nabla \frac{v_n}{\mn}\right)+o\left(\lambda_n\right).
\label{rep-gamma}
\end{align}
From \eqref{new3.3} and \eqref{new3.5}, it follows that
\begin{align}
\nabla \frac{v_n(x)}{\mn}&=-\frac{\gamma^0_{j,n}}{2\pi}\cdot\frac{\nu}{R}+4d_j\frac{\bm{a}_j-2(\bm{a}_j\cdot\nu)\nu}{R^2}\lambda_n^\frac{1}{2}+\nabla h_{j,n}(x) +o\left(\lambda_n^\frac{1}{2}\right)\nonumber
\end{align}
on $\del\ball{R}{x_{j,n}}$, where
\begin{align*}
h_{j,n}(x)&=\gamma^0_{j,n}K(x,x_{j,n})+\sum_{1\leq i\leq m, i\not=j}\gamma^0_{i,n}G(x,x_{i,n})\\
&\quad+\bm{\gamma^1}_{j,n}\cdot\nabla_yK(x,x_{j,n})+\sum_{1\leq i\leq m, i\not=j}\bm{\gamma^1}_{i,n}\cdot\nabla_yG(x,x_{i,n}).
\end{align*}
We note that $h_{j,n}$ is harmonic in $\ball{R}{x_{j,n}}$.
Arguing as in Proposition \ref{newp3.3} and  Remark \ref{newr3.4}, we have
\[ \frac{\partial u_n}{\partial x_\alpha}(x)=\sum_{j=1}^m\left\{\sigma_{j,n}^0G_{x_\alpha}(x, x_{j,n})+\bm{\sigma^1}_{j,n}\cdot\nabla_yG_{x_\alpha}(x,x_{j,n})\right\}+o\left(\lambda_n^{\frac{1}{2}}\right) \]
for $\alpha=1,2$ uniformly in $\overline{\Omega}\setminus\cup_{j=1}^mB_R(\kappa_j)$, where
$$
\sigma_{j,n}^0=\int_{\ball{R}{x_{j,n}}}\lambda_n e^{u_n}\tend 8\pi
$$
and $\bm{\sigma^1}_{j,n}=(\sigma_{j,n}^{1,1}, \sigma_{j,n}^{1,2})$ with
\[ \sigma_{j,n}^{1,\alpha}=\int_{B_R(x_{j,n})}(y-x_{j,n})_\alpha\lambda_ne^{u_n}dy.  \]
Similarly to (\ref{new3.5}), inequality (\ref{2.6a}) implies
$$
\frac{\bm{\sigma^1}_{j,n}}{\delta_{j,n}}=\frac{1}{\delta_{j,n}}\int_{\ball{R}{x_{j,n}}}(y-x_{j,n})\ln e^{u_n}\tend\int_{\R^2}\til{y}e^U=\bm{0},
$$
i.e., $\bm{\sigma^1}_{j,n}=o(\delta_{j,n})=o\left(\lambda_n^{\frac{1}{2}}\right)$.  Then we obtain
\begin{equation}
\nabla u_n(x)=-\frac{\sigma_{j,n}^0}{2\pi}\cdot\frac{\nu}{R}+\nabla k_{j,n}(x)+o\left(\lambda_n^\frac{1}{2}\right)\label{u_asympt_sing}
\end{equation}
on $\del\ball{R}{x_{j,n}}$ by (\ref{2.2}), where
$$
k_{j,n}(x)=\sigma_{j,n}^0K(x,x_{j,n})+\sum_{1\leq i\leq m, i\not=j}\sigma_{i,n}^0G(x,x_{i,n}).
$$
These formulae imply
\begin{align*}
&\int_{\partial\ball{R}{x_{j,n}}}2\frac{\del u_n}{\del \nu}\frac{\del}{\del \nu}\frac{\vn}{\mn}=2\int_{\partial\ball{R}{x_{j,n}}}\left(-\frac{\sigma_{j,n}^0}{2\pi}\cdot\frac{1}{R}+\frac{\del k_{j,n}}{\del\nu}\right)\\
&\qquad\qquad\times\left(-\frac{\gamma^0_{j,n}}{2\pi}\cdot\frac{1}{R}+4d_j\frac{-\bm{a}_j\cdot\nu}{R^2}\lambda_n^\frac{1}{2}+\frac{\del h_{j,n}}{\del\nu}\right) +o\left(\lambda_n^\frac{1}{2}\right)\\
&=\frac{\sigma_{j,n}^0\gamma^0_{j,n}}{\pi R}+\frac{4\sigma_{j,n}^0d_j\lambda_n^\frac{1}{2}}{\pi R^3}\int_{\partial\ball{R}{x_{j,n}}}\bm{a}_j\cdot\nu-\frac{\sigma_{j,n}^0}{\pi R}\int_{\partial\ball{R}{x_{j,n}}}\frac{\del h_{j,n}}{\del\nu}\\
&\quad-\frac{\gamma^0_{j,n}}{\pi R}\int_{\partial\ball{R}{x_{j,n}}}\frac{\del k_{j,n}}{\del\nu}-\frac{8d_j\lambda_n^\frac{1}{2}}{R^2}\int_{\partial\ball{R}{x_{j,n}}}(\bm{a}_j\cdot\nu)\frac{\del k_{j,n}}{\del\nu}\\
&\qquad+\int_{\partial\ball{R}{x_{j,n}}}2\frac{\del k_{j,n}}{\del\nu}\frac{\del h_{j,n}}{\del\nu}+o\left(\lambda_n^\frac{1}{2}\right)\\
&=\frac{\sigma_{j,n}^0\gamma^0_{j,n}}{\pi R}-\frac{8d_j\lambda_n^\frac{1}{2}}{R^2}\int_{\partial\ball{R}{x_{j,n}}}(\bm{a}_j\cdot\nu)\frac{\del k_{j,n}}{\del\nu} \\
&\qquad +\int_{\partial\ball{R}{x_{j,n}}}2\frac{\del k_{j,n}}{\del\nu}\frac{\del h_{j,n}}{\del\nu}+o\left(\lambda_n^\frac{1}{2}\right),
\end{align*}
by the divergence formula because $h_{j,n}$ and $k_{j,n}$ are harmonic in $\ball{R}{x_{j,n}}$.\\
Similarly it holds that
\begin{align*}
&\int_{\partial\ball{R}{x_{j,n}}}\nabla u_n\cdot\nabla \frac{v_n}{\mn}=\int_{\partial\ball{R}{x_{j,n}}}\left(-\frac{\sigma_{j,n}^0}{2\pi}\cdot\frac{\nu}{R}+\nabla k_{j,n}\right)\\
&\qquad\qquad{\rosso{\cdot}}\left(-\frac{\gamma^0_{j,n}}{2\pi}\cdot\frac{\nu}{R}+4d_j\frac{\bm{a}_j-2(\bm{a}_j\cdot\nu)\nu}{R^2}\lambda_n^\frac{1}{2}+\nabla h_{j,n}\right) +o\left(\lambda_n^\frac{1}{2}\right)\\
&\quad=\frac{\sigma_{j,n}^0\gamma^0_{j,n}}{2\pi R}+\frac{4d_j\lambda_n^\frac{1}{2}}{R^2}\int_{\partial\ball{R}{x_{j,n}}}\bm{a}_j\cdot\nabla k_{j,n}-\frac{8d_j\lambda_n^\frac{1}{2}}{R^2}\int_{\partial\ball{R}{x_{j,n}}}(\bm{a}_j\cdot\nu)\frac{\del k_{j,n}}{\del\nu}\\
&\qquad+\int_{\partial\ball{R}{x_{j,n}}}\nabla k_{j,n}\cdot\nabla h_{j,n}+o\left(\lambda_n^\frac{1}{2}\right).
\end{align*}
Here, the identity
$$
\int_{\partial\ball{R}{x_{j,n}}}2\frac{\del k_{j,n}}{\del\nu}\frac{\del h_{j,n}}{\del\nu}-\int_{\partial\ball{R}{x_{j,n}}}\nabla k_{j,n}\cdot\nabla h_{j,n}=0
$$
follows again from the bi-linear Pohozaev identity (\ref{bi-po}) because $h_{j,n}$ and $k_{j,n}$ are harmonic. We have also
\begin{align*}
\frac{4d_j\lambda_n^\frac{1}{2}}{R^2}\int_{\partial\ball{R}{x_{j,n}}}\bm{a}_j\cdot\nabla k_{j,n}&=\frac{4d_j\lambda_n^\frac{1}{2}}{R^2}|\partial\ball{R}{x_{j,n}}|\bm{a}_j\cdot\nabla k_{j,n}(x_{j,n})\\
&=\frac{8\pi d_j\lambda_n^\frac{1}{2}}{R}\bm{a}_j\cdot\nabla k_{j,n}(x_{j,n})=o\left(\lambda_n^\frac{1}{2}\right)
\end{align*}
by the mean value theorem for harmonic functions, because, from (\ref{conditionS})
$$
\nabla k_{j,n}(x_{j,n})\tend 8\pi\nabla \left.\left(K(x,\kappa_j)+\sum_{1\leq i\leq m,\,i\not= j}G(x,\kappa_i)\right)\right|_{x=\kappa_j}=0.
$$

Plugging these formulae to \eqref{rep-gamma}, we end up with
$$
\gamma^0_{j,n}=-(1-\mn)\left\{\frac{16\pi}{3}b_j+o(1)\right\}+\frac{\sigma_{j,n}^0\gamma^0_{j,n}\mn}{4\pi}+o\left(\lambda_n^\frac{1}{2}\right),
$$
which means, since $\sigma_{j,n}^0\tend 8\pi$ and $\mn\tend 1$,
\begin{align}
\gamma^0_{j,n}&=\frac{-(1-\mn)\left\{\frac{16\pi}{3}b_j+o(1)\right\}+o\left(\lambda_n^\frac{1}{2}\right)}{1-\frac{\sigma_{j,n}^0\mn}{4\pi}}\nonumber\\
&=(1-\mn)\left\{\frac{16\pi}{3}b_j+o(1)\right\}+o\left(\lambda_n^\frac{1}{2}\right).
\label{sharp_gamma_estimate}
\end{align}
%by $\sigma_{j,n}^0\tend 8\pi$ and $\mn\tend 1$.
\end{proof}

\begin{proposition}\label{newp3.12}
Let $\mu_\infty=1$ and $b_j\not=0$ for some $j\in\{1,\cdots,m\}$. Then it holds that
$$
\mn=1-\frac{3}{2}\frac{1}{\log\ln}+o\left(\frac{1}{\log\ln}\right).
$$
\end{proposition}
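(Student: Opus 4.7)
The proof will be short because all the substantive work has already been done in Propositions \ref{prop:2.9} and \ref{newp3.14}. The plan is simply to compare the two asymptotic formulas for $\gamma_{j,n}^0$ that these propositions provide, for a blow-up index $j$ at which $b_j\neq 0$.

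More precisely, I would first pick $j\in\{1,\dots,m\}$ with $b_j\neq 0$, which exists by hypothesis. From \eqref{new3.6} we have
\[
\gamma_{j,n}^0=\frac{8\pi b_j+o(1)}{\log\lambda_n},
\]
while the sharp Rellich--Pohozaev estimate \eqref{sharp_gamma_estimate} gives
\[
\gamma_{j,n}^0=(1-\mu_n)\left\{\tfrac{16\pi}{3}b_j+o(1)\right\}+o\bigl(\lambda_n^{1/2}\bigr).
\]
Equating these and using $\lambda_n^{1/2}=o\bigl(1/|\log\lambda_n|\bigr)$ (since $\lambda_n^{1/2}\log\lambda_n\to 0$ as $\lambda_n\downarrow 0$), we obtain
\[
(1-\mu_n)\left\{\tfrac{16\pi}{3}b_j+o(1)\right\}=\frac{8\pi b_j+o(1)}{\log\lambda_n}+o\!\left(\frac{1}{\log\lambda_n}\right).
\]
Since $b_j\neq 0$, the bracket $\frac{16\pi}{3}b_j+o(1)$ is bounded away from zero for $n$ large, so we may divide and solve for $1-\mu_n$, yielding
\[
1-\mu_n=\frac{3}{2}\cdot\frac{1}{\log\lambda_n}+o\!\left(\frac{1}{\log\lambda_n}\right),
\]
which is the claim.

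There is essentially no obstacle at this stage: the only thing to verify carefully is the scale comparison $\lambda_n^{1/2}\ll 1/|\log\lambda_n|$, so that the Pohozaev remainder is absorbed into the desired $o(1/\log\lambda_n)$ term, and that the coefficient $\frac{16\pi}{3}b_j+o(1)$ is indeed invertible for $n\gg 1$, which follows from $b_j\neq 0$. The real work — deriving \eqref{sharp_gamma_estimate} via the bilinear Pohozaev identity of Proposition \ref{bi-po-prop} combined with the boundary expansions of $\nabla u_n$ and $\nabla v_n$ in terms of $K$, $G$ and their $y$-derivatives — is already contained in Proposition \ref{newp3.14}, so Proposition \ref{newp3.12} becomes a direct consequence.
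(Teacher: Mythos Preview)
Your proof is correct and follows essentially the same approach as the paper: equate the two expressions for $\gamma_{j,n}^0$ from Propositions~\ref{prop:2.9} and~\ref{newp3.14}, use $\lambda_n^{1/2}=o(1/\log\lambda_n)$ to absorb the Pohozaev remainder, and divide by $\frac{16\pi}{3}b_j+o(1)$ using $b_j\neq 0$.
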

\begin{proof}
Combining Propositions \ref{prop:2.9} and \ref{newp3.14} we have
$$
\frac{8\pi b_j+o(1)}{\log\ln}=(1-\mn)\left\{\frac{16\pi}{3}b_j+o(1)\right\}+o\left(\ln^\frac{1}{2}\right).
$$
Then we obtain, since  $b_j\not=0$,
\begin{align*}
\mn&=1-\frac{8\pi b_j+o(1)}{\frac{16\pi}{3}b_j+o(1)}\cdot\frac{1}{\log\ln}+o\left(\ln^\frac{1}{2}\right)\\
&=1-\frac{3}{2}\cdot\frac{1}{\log\ln}+o\left(\frac{1}{\log\ln}\right) .
\end{align*}
\end{proof}
\begin{remark}\label{newr3.13}
By Proposition \ref{newp3.12}, we only have  to consider the cases $b_j=0$ ($j=1,\cdots, m$) to calculate the Morse index of $u_n$ because, {\rosso{by the last proposition, we have that }} $\mn>1$, for $n$ large enough, {\rosso{if}} $b_j\not=0$.
\end{remark}
%Let us introduce some notations.
Let $D=(D_{ij})$ be the diagonal matrix $\mathrm{diag}[d_1,d_1,d_2,d_2,\cdots,d_m,d_m]$, i.e. $D$ takes the diagonal components{\rosso{ $D_{2j-1,2j-1}=D_{2j,2j}=d_j$ }}for $j=1,\cdots,m$.\\ %Then we put
%\begin{equation}\label{DHD}
%\widetilde{\mathrm{Hess}}\,H^m(\ka_1,\cdots,\ka_m)=D\mathrm{Hess}\,H^m(\ka_1,\cdots,\ka_m)D.
%\end{equation}
Let us introduce some notations. Set
\begin{align}
&I_{\alpha\beta}(z_1,z_2,z_3)\nonumber\\
&:=\int_{\del\ball{R}{z_1}}\left\{\frac{\del}{\del\nu_x}G_{x_\alpha}(x,z_2)G_{y_\beta}(x,z_3)-G_{x_\alpha}(x,z_2)\frac{\del}{\del\nu_x}G_{y_\beta}(x,z_3)\right\}d\sigma_x\nonumber\\
  &=\label{I}
\left\{
\begin{array}{ll}
0 &\quad(z_1\not=z_2,\; z_1\not=z_3)\\
\frac{1}{2}R_{x_\alpha x_\beta}(z_1)  &\quad(z_1=z_2=z_3)  \\
G_{x_\alpha y_\beta}(z_1,z_3) &\quad (z_1=z_2,\; z_1\not=z_3) \\
G_{x_\alpha x_\beta}(z_1,z_2) &\quad (z_1\not=z_2,\; z_1=z_3),
\end{array}
\right.
\end{align}
(see \cite[Proposition 2.3]{GOS11}).
\bigskip
{\rosso{We prove a crucial estimate }}for the proof of Theorem \ref{t1}.
\begin{proposition}\label{newp3.16}
If $1-\mn=o\left(\ln^\frac{1}{2}\right)$, there exists $\eta\in\R$ such that
%, an eigenvalue of $\til{\mathrm{Hess}}H^m(\ka_1,\cdots,\ka_m)$ {\rosso{(see \eqref{DHD} for the definition of $\til{\mathrm{Hess}}H^m$)}}, satisfying
\begin{equation}\label{new3.14.8}
\mn=1-48\pi\eta\ln+o(\ln).
\end{equation}
Moreover $\eta$ is an eigenvalue of %the Hessian matrix of $DH^m(\ka_1,\cdots,\ka_m)D$.  
the matrix $D\{\mathrm{Hess}H^m(\ka_1,\cdots,\ka_m)\}D$.
% where $D$ is the diagonal matrix $\mathrm{diag}[d_1,d_1,d_2,d_2,\cdots,d_m,d_m]$ and $d_i$ as in \eqref{2.6b}.
\end{proposition}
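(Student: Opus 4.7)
The plan is to apply Green's second identity on $B_R(x_{j,n})$ to the pair $(\partial_\alpha u_n, v_n)$ and extract the precise $\lambda_n$-order of $1-\mn$. First, by Proposition \ref{newp3.12}, the hypothesis $1-\mn=o(\lambda_n^{1/2})$ forces $b_j=0$ for all $j=1,\ldots,m$ (otherwise $1-\mn$ would be of order $1/\log\lambda_n$). Thus by Remark \ref{remark-2.4} each limit profile is $V_j=\bm{a}_j\cdot\nabla U$, and Proposition \ref{newp3.8} guarantees $\bm{a}:=(\bm{a}_1,\ldots,\bm{a}_m)\neq\bm{0}$.

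Since $-\Delta(\partial_\alpha u_n) = \lambda_n e^{u_n}\partial_\alpha u_n$ (by differentiating \eqref{1}) and $-\Delta v_n=\mn\lambda_n e^{u_n}v_n$, Green's second identity on $B_R(x_{j,n})$ yields
\begin{equation}\label{greenid}
(1-\mn)\int_{B_R(x_{j,n})} \lambda_n e^{u_n}\partial_\alpha u_n\, v_n \, dx = \int_{\partial B_R(x_{j,n})}\left[\partial_\alpha u_n\,\partial_\nu v_n - v_n\,\partial_\nu\partial_\alpha u_n\right]d\sigma.
\end{equation}
Rescaling the left-hand side via \eqref{2.4} and \eqref{2.8} produces $(1-\mn)\delta_{j,n}^{-1}\int e^{\tilde u_{j,n}}\partial_{\tilde\alpha}\tilde u_{j,n}\tilde v_{j,n}\,d\tilde x$, which by Proposition \ref{newp3.1}, Remark \ref{remark-2.4}, and the elementary identity $\int_{\R^2}e^U\partial_\alpha U\partial_\beta U=\frac{4\pi}{3}\delta_{\alpha\beta}$ converges to $(1-\mn)\delta_{j,n}^{-1}\cdot\frac{4\pi}{3}a_{j,\alpha}(1+o(1))$.

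For the right-hand side, I would use the sharp expansion of $\partial_\alpha u_n$ on $\partial B_R(x_{j,n})$ (an analogue of Remark \ref{newr3.4} applied to $u_n$, with $\sigma^0_{i,n}\to 8\pi$ and $\bm{\sigma}^1_{i,n}=o(\lambda_n^{1/2})$) together with Proposition \ref{prop:2.9}, noting that $\gamma^0_{i,n}=o(\lambda_n^{1/2})$ follows from \eqref{sharp_gamma_estimate} combined with $b_i=0$ and $1-\mn=o(\lambda_n^{1/2})$. This gives $\partial_\alpha u_n=8\pi\sum_i G_{x_\alpha}(\cdot,x_{i,n})+o(\lambda_n^{1/2})$ and $v_n=-8\pi\lambda_n^{1/2}\sum_{k,\beta} d_k a_{k,\beta}\,G_{y_\beta}(\cdot,x_{k,n})+o(\lambda_n^{1/2})$ in $C^1$ on $\partial B_R(x_{j,n})$. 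Substituting into the boundary integral and invoking the identities for $I_{\alpha\beta}$ recorded in \eqref{I}, the cross-terms collapse precisely into the Hessian of $H^m$, producing $64\pi^2\lambda_n^{1/2}\sum_{i,\beta} d_i a_{i,\beta}\frac{\partial^2 H^m}{\partial x_{j,\alpha}\partial x_{i,\beta}}(\kappa)+o(\lambda_n^{1/2})=64\pi^2\lambda_n^{1/2}[(\mathrm{Hess}\,H^m)D\bm{a}]_{j,\alpha}+o(\lambda_n^{1/2})$.

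Equating both sides of \eqref{greenid} and using $\delta_{j,n}=d_j\lambda_n^{1/2}+o(\lambda_n^{1/2})$, multiplying through by $d_j$ and dividing by $\lambda_n^{1/2}$, I obtain for each $(j,\alpha)$ the relation $\frac{1-\mn}{48\pi\lambda_n}a_{j,\alpha}=[D(\mathrm{Hess}\,H^m)D\bm{a}]_{j,\alpha}+o(1)$. Since $\bm{a}\neq\bm{0}$, extracting a further subsequence if needed, $\eta_n:=\frac{1-\mn}{48\pi\lambda_n}$ converges to some $\eta\in\R$ satisfying $D(\mathrm{Hess}\,H^m(\kappa_1,\ldots,\kappa_m))D\,\bm{a}=\eta\bm{a}$, which proves both the asymptotic formula and the eigenvalue claim. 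The main obstacle is the justification of the $o(\lambda_n^{1/2})$ boundary expansions for both $\partial_\alpha u_n$ and $v_n$ in $C^1$ on $\partial B_R(x_{j,n})$, and the combinatorial bookkeeping that shows, via \eqref{I}, that after substitution the boundary integral reorganizes exactly into the bilinear form of $D(\mathrm{Hess}\,H^m)D$.
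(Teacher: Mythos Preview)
Your proposal is correct and follows essentially the same approach as the paper: apply Green's identity to the pair $(\partial_\alpha u_n, v_n)$ on $B_R(x_{j,n})$, use $b_j=0$ (forced by the hypothesis via Proposition~\ref{newp3.12}) together with \eqref{gamma-0-jn} to get $\gamma^0_{j,n}=o(\lambda_n^{1/2})$, rescale the interior integral to $\tfrac{4\pi}{3}a_{j,\alpha}$, and identify the boundary term via \eqref{2.2}, \eqref{new3.15.1} and the identities \eqref{I} as $[D(\mathrm{Hess}\,H^m)D\,\a]_{j,\alpha}$ up to constants. The only minor point you leave implicit is that the passage to the limit in the rescaled interior integral requires the decay bound of Lemma~\ref{newl3.11} combined with \eqref{2.6a}, which the paper invokes explicitly.
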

\begin{proof}
Since $1-\mn=o\left(\ln^\frac{1}{2}\right)$ it holds that
\begin{equation}\label{new3.15.1}
v_n=-8\pi\ln^\frac{1}{2}\sum_{j=1}^md_j\bm{a}_j\cdot\nabla_y G(x,x_{j,n})+o\left(\ln^\frac{1}{2}\right)
\end{equation}
in $\Co{1}{\clo{\Omega}\backslash \cup_{j=1}^m\ball{R}{\ka_j}}$ {\rosso {by \eqref{asin-v-n}, \eqref{new3.5}, \eqref{gamma-0-jn} and \eqref{2.6b}.}}
%Proposition  \ref{newp3.3}, relation \eqref{new3.5}, and Proposition \ref{newp3.14}.  We have also $b_j=0$ for any $j=1,\cdots,m$ by Proposition \ref{newp3.12}.

Using that
$$
-\lap\frac{\de \un}{\de x_\alpha}=\ln e^{\un}\frac{\de \un}{\de x_\alpha}\quad\text{in $\Omega$},
$$
we have, {\rosso{for $\alpha=1,2$,}}
\begin{align}
\int_{\de B_R(x_{j,n}) } &\left\{ \frac{\de}{\de \nu}\left(\frac{\de \un}{\de x_\alpha}\right) \frac{\vn}{\ln^\frac{1}{2}}-\frac{\de \un}{\de x_\alpha} \frac{\de}{\de \nu}\left(\frac{\vn}{\ln^\frac{1}{2}}\right)\right\}\, d\sigma_x
\nonumber\\
=&\int_{B_R(x_{j,n}) }\left(\Delta \frac{\de \un}{\de x_\alpha} \frac{\vn}{\ln^\frac{1}{2}}-\frac{\de \un}{\de x_\alpha} \Delta\frac{\vn}{\ln^\frac{1}{2}}\right) \, dx\nonumber\\
=&\frac{-1+\mu_n}{\ln^\frac{1}{2}} \int_{B_R(x_{j,n}) }\!\!\!\!\!\!\ln e^{\un}\frac{\de \un}{\de x_\alpha}\vn \, dx\nonumber\\
=&\frac{\mu_n-1}{\ln^\frac{1}{2}\delta_{j,n}} \int_{\ball{\frac{R}{\delta_{j,n}}}{0} }\!\!\!\!\!\!e^{\til{u}_{j,n}}\frac{\de \til{u}_{j,n}}{\de \til{x}_\alpha}\til{v}_{j,n} \, d\til{x}.
 \label{new3.15.5}
\end{align}
Then we obtain, {\rosso{recalling that $b_j=0$,}}
\begin{align}\label{new3.16}
\int_{\ball{\frac{R}{\delta_{j,n}}}{0} }\!\!\!\!\!\!e^{\til{u}_{j,n}}\frac{\de \til{u}_{j,n}}{\de \til{x}_\alpha}\til{v}_{j,n} \, d\til{x}\tend\int_{\R^2}\!\!\!\!\!\!e^{U}U_\alpha \left(\bm{a}_j\cdot\nabla U%+b_j\clo{U}
\right)=\frac{4\pi}{3}a_{j,\alpha}
\end{align}
{\rosso{where $\bm{a}_j=(a_{j,1},a_{j,2})$}},
by \eqref{2.6a} and Lemma \ref{newl3.11} (see Lemma \ref{l6.2} for the last integration).
Then, by \eqref{I}
\begin{align*}
&\sum_{1\leq k\leq m}I_{\alpha\beta}(\kappa_j,\kappa_k,\kappa_l)\\
&=\left\{
\begin{array}{ll}
\frac{1}{2}R_{x_\alpha x_\beta}(\kappa_j)+\displaystyle\sum_{\stackrel{1\leqq k\leqq m}{k\not=j}}G_{x_\alpha x_\beta}(\kappa_j,\kappa_k) , &\quad(j=l)  \\
G_{x_\alpha y_\beta}(\kappa_j,\kappa_l) , &\quad(j\not=l)
\end{array}
\right.
\\
&=H^m_{x_{j,\alpha }x_{l,\beta}}(\kappa_1,\cdots,\kappa_m).
\end{align*}

{\rosso{ Moreover from }}\eqref{new3.15.1} {\rosso{ and \eqref{2.2}}}, it follows that
\begin{align}
\int_{\de B_R(x_{j,n}) } &\left\{ \frac{\de}{\de \nu}\left(\frac{\de \un}{\de x_\alpha}\right) \frac{\vn}{\ln^\frac{1}{2}}-\frac{\de \un}{\de x_\alpha} \frac{\de}{\de \nu}\left(\frac{\vn}{\ln^\frac{1}{2}}\right)\right\}\, dx\\
&\tend\int_{\del\ball{R}{\kappa_j}}\left\{\frac{\del}{\del\nu}8\pi\sum_{k=1}^mG_{x_\alpha}(x,\kappa_k)\cdot (-8\pi)\sum_{l=1}^m d_l\bs{a}_l\cdot\nabla_yG(x,\kappa_l)\right.
\nonumber\\
&\qquad\left.-8\pi\sum_{k=1}^mG_{x_\alpha}(x,\kappa_k)\frac{\del}{\del\nu}(-8\pi)\sum_{l=1}^md_l\bs{a}_l\cdot\nabla_yG(x,\kappa_l)\right\}d\sigma_x
\nonumber\\
&=-64\pi^2\sum_{1\leq k, l\leq m}d_l\sum_{\beta=1}^2a_{l,\beta}I_{\alpha\beta}(\kappa_j,\kappa_k,\kappa_l)
\nonumber\\
&=-64\pi^2\sum_{\stackrel{1\leq l\leq m}{\beta=1,2}}\left(\sum_{1\leq k\leq m} I_{\alpha\beta}(\kappa_j,\kappa_k,\kappa_l)\right)d_la_{l,\beta}\nonumber\\
 %\label{new3.16.5}
&{\rosso{=-64\pi^2 \sum _{  \stackrel{1\leq l\leq m} {\beta=1,2}}   H^m_{x_{j,\alpha }x_{l,\beta}}(\kappa_1,\cdots,\kappa_m)d_la_{l,\beta}  .}}\nonumber
%\left(\mathrm{Hess} \ H^m(\kappa_1,\cdots,\kappa_m)\right)_{l,\beta} d_la_{l,\beta}  .}}\nonumber
\end{align}
%we obtain
%$$
%\eqref{new3.16.5}=-64\pi^2\left.\mathrm{Hess} \ H^m(\kappa_1,\cdots,\kappa_m)\cdot^t(d_1\bs{a}_1,\cdots,d_m\bs{a}_m)\right|_{j,\alpha}.
%$$
Consequently, it follows from \eqref{new3.15.5} {\rosso{ and \eqref{new3.16}}} that
\begin{align*}
&-64\pi^2 {\rosso{    \sum _{  \stackrel{1\leq l\leq m} {\beta=1,2}}  H^m_{x_{j,\alpha }x_{l,\beta}}(\kappa_1,\cdots,\kappa_m) %\left(\mathrm{Hess} \ H^m(\kappa_1,\cdots,\kappa_m)\right)_{l,\beta}
d_la_{l,\beta}          }}+o(1)\\
&\qquad=\frac{\mn-1}{\ln}\left\{\frac{4\pi}{3d_j}a_{j,{\rosso{\alpha}}}+o(1)\right\},
\end{align*}
which is equivalent to
\begin{align}
&\frac{1-\mn}{\ln}\left\{a_{j,{\rosso{\alpha}}}+o(1)\right\}
\nonumber\\
&=48\pi d_j{\rosso{        \sum _{  \stackrel{1\leq l\leq m} {\beta=1,2}}   H^m_{x_{j,\alpha }x_{l,\beta}}(\kappa_1,\cdots,\kappa_m)
%\left(\mathrm{Hess} \ H^m(\kappa_1,\cdots,\kappa_m)\right)_{l,\beta}
d_la_{l,\beta}           }}+o(1)
\nonumber\\
&=48\pi {\rosso{        \sum _{  \stackrel{1\leq l\leq m} {\beta=1,2}}  
\left(D\mathrm{Hess}H^m(\kappa_1,\cdots,\kappa_m)D\right)_{2(j-1)+\alpha, 2(l-1)+\beta
}
%  \left(DH^m(\kappa_1,\cdots,\kappa_m)D\right)_{x_{j,\alpha }x_{l,\beta}}
%\left(\til{\mathrm{Hess}} \ H^m(\kappa_1,\cdots,\kappa_m)\right)_{l,\beta}
a_{l,\beta}           }}+o(1).
\label{new3.16.7}
\end{align}
%{\rosso{where $D$ is the diagonal matrix   $\mathrm{diag}[d_1,d_1,d_2,d_2,\cdots,d_m,d_m]$.     }}\\
By Proposition \ref{newp3.8},{\rosso{ recalling that $b_j=0$ for any $j$}} we have {\rosso{that}} $a_{j,\alpha}\not=0$ {\rosso{for some $j=1,\dots,m$ and $\alpha=1,2$}}.  For such $(j, \alpha)$ it holds that
$$
\frac{1-\mn}{48\pi\ln}\tend\frac{{\rosso{    \sum _{  \stackrel{1\leq l\leq m} {\beta=1,2}} 
\left(D\mathrm{Hess}H^m(\kappa_1,\cdots,\kappa_m)D\right)_{2(j-1)+\alpha, 2(l-1)+\beta}
% \left(DH^m(\kappa_1,\cdots,\kappa_m)D\right)_{x_{j,\alpha }x_{l,\beta}}  
%\left(\til{\mathrm{Hess}} \ H^m(\kappa_1,\cdots,\kappa_m)\right)_{l,\beta}
a_{l,\beta}           }}}{a_{j,\alpha}}=:\eta
$$
and hence \eqref{new3.14.8} {\rosso{follows}}. Then \eqref{new3.16.7} implies
$$
\eta a_{j,\alpha}=
{\rosso{\sum _{  \stackrel{1\leq l\leq m} {\beta=1,2}}  
\left(D\mathrm{Hess}H^m(\kappa_1,\cdots,\kappa_m)D\right)_{2(j-1)+\alpha, 2(l-1)+\beta}
%\left(DH^m(\kappa_1,\cdots,\kappa_m)D\right)_{x_{j,\alpha }x_{l,\beta}}  
 %\left(\til{\mathrm{Hess}} \ H^m(\kappa_1,\cdots,\kappa_m)\right)_{l,\beta}
a_{l,\beta} }}
$$
for any $j$ and $\alpha$.  Thus $\eta$ is an eigenvalue of 
%${\mathrm{Hess}}\left(DH^m(\kappa_1,\cdots,\kappa_m)D\right)$  
$D\mathrm{Hess}H^m(\kappa_1,\cdots,\kappa_m)D$
and the proof is complete.
\end{proof}

\bigskip

We conclude this section with some orthogonality relations between the eigenfunctions. Let $v_n^l$ be the $l$-th eigenfunction of \eqref{autov-n}.  Let $\c^l$, $\a^l$, and $\b^l$ be the associated coefficients $\c$, $\a$, and $\b$, arising in the limit of $\tilde v_n^l$ as $n\rightarrow\infty$ (see Remark {\rosso{\ref{remark-2.4}}}).  Thus we assume the orthogonality in Dirichlet norm,
\begin{equation}\label{ortho}
\int_\Omega\nabla v_n^l\cdot\nabla v_n^{l'}=0\quad \text{if $l\not=l'$.}
\end{equation}
%This relation is reflected to the finite dimensional level through the process of $\lambda\downarrow 0$:

\begin{proposition}\label{newp3.15}
We have the following,
\begin{enumerate}
\item[$i)$]
If $\mu_\infty^l=0$, it holds that
\begin{align}\label{grad-v-nl}
\int_{\Om} \left| \na \vn^l\right|^2\, dx &=\mn^l\left\{8\pi\nor\c^l\nor_{\R^m}^2+o(1)\right\}=-\frac{4\pi\nor\c^l\nor_{\R^m}^2}{\log\ln}+o\left(\frac{1}{\log\ln}\right).
\end{align}
\item[$ii)$]
If $\mu_\infty^l=\mu_\infty^{l'}=0$ and $l\not=l'$, it holds that
$$
\c^l\cdot\c^{l'}=0.
$$
\item[$iii)$]
If $\mu_\infty^l=1$, it holds that
\begin{align*}
\int_{\Om} \left| \na \vn^l\right|^2\, dx &=\frac{4\pi}{3}\left(\nor\a^l\nor_{\R^{2m}}^2+8\nor\b^l\nor_{\R^m}^2\right)+o\left(1\right).
\end{align*}
\item[$iv)$]
If $\mu_\infty^l=\mu_\infty^{l'}=1$ and $l\not=l'$, it holds that
$$
\a^{l}\cdot\a^{l'}+8\b^l\cdot\b^{l'}=0.
$$
\end{enumerate}
\end{proposition}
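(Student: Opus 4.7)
The common starting point for all four parts is to reduce Dirichlet inner products to weighted $L^2$ inner products via the eigenvalue equation. Integrating by parts, $\int_\Omega \nabla v_n^l\cdot\nabla v_n^{l'} = \mu_n^l \lambda_n \int_\Omega e^{u_n} v_n^l v_n^{l'}$. Thus for (i) we set $l=l'$ and take $\int |\nabla v_n^l|^2 = \mu_n^l \lambda_n \int_\Omega e^{u_n}(v_n^l)^2$, while for (ii) and (iv) the Dirichlet orthogonality $\int \nabla v_n^l\cdot\nabla v_n^{l'}=0$ combined with $\mu_n^l\not=0$ (for $n$ large) yields $\lambda_n\int_\Omega e^{u_n} v_n^l v_n^{l'}=0$.

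The second step is the localization of $\lambda_n\int_\Omega e^{u_n} v_n^l v_n^{l'}$ to a sum of integrals over the balls $B_R(x_{j,n})$. The remainder $\lambda_n\int_{\Omega\setminus\cup_j B_R(x_{j,n})}e^{u_n}v_n^l v_n^{l'}$ is $O(\lambda_n)$ because $\lambda_n e^{u_n}=O(\lambda_n)$ off the blow-up set by \eqref{2.2}, while $\|v_n^l\|_\infty\leq 1$. On each $B_R(x_{j,n})$ the change of variables $x=x_{j,n}+\delta_{j,n}\tilde x$ together with \eqref{2.5} converts the integral to $\int_{B_{R/\delta_{j,n}}(0)} e^{\tilde u_{j,n}} \tilde v_{j,n}^l \tilde v_{j,n}^{l'}\,d\tilde x$. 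By the uniform bound \eqref{2.6a}, the integrand is dominated by $Ce^U$, which is integrable on $\R^2$, so dominated convergence combined with Proposition \ref{newp3.1} gives the limit $\int_{\R^2} e^U V_j^l V_j^{l'}$.

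The third step is to identify the limit integrals using Remark \ref{remark-2.4} and Lemma \ref{l6.2}. When $\mu_\infty=0$ we have $V_j\equiv c_j$, hence $\int_{\R^2}e^U V_j^l V_j^{l'}= c_j^l c_j^{l'}\int_{\R^2}e^U = 8\pi c_j^l c_j^{l'}$, and summing over $j$ produces $8\pi\,\c^l\cdot\c^{l'}$; this yields (i) up to the factor $\mu_n^l$ and gives (ii) directly. When $\mu_\infty=1$, expand $V_j^l V_j^{l'}=(\bs{a}_j^l\cdot\nabla U+b_j^l\bar U)(\bs{a}_j^{l'}\cdot\nabla U+b_j^{l'}\bar U)$ and integrate against $e^U$; the cross terms $\int e^U \bar U (\bs{a}\cdot\nabla U)$ vanish by rotational symmetry since $e^U\bar U$ is radial and $\nabla U$ is odd, while the remaining ones give $\int e^U (\nabla U)_\alpha (\nabla U)_\beta = \frac{4\pi}{3}\delta_{\alpha\beta}$ and $\int e^U \bar U^2 = \frac{32\pi}{3}$ from Lemma \ref{l6.2}. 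Summing over $j$ gives $\frac{4\pi}{3}(\a^l\cdot\a^{l'}+8\b^l\cdot\b^{l'})$, which produces (iii) and (iv). For the second equality in (i), we substitute the sharp asymptotics $\mu_n^l = -\frac{1}{2\log\lambda_n}+o(1/\log\lambda_n)$ from Proposition \ref{newp3.10}.

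The only mild obstacle is justifying the passage to the limit in $\int_{B_{R/\delta_{j,n}}(0)} e^{\tilde u_{j,n}}(\tilde v_{j,n}^l)^2\,d\tilde x$: the exponent and the eigenfunction converge only locally, and the domain of integration expands to $\R^2$. This is handled by \eqref{2.6a}, which provides the uniform upper bound $e^{\tilde u_{j,n}}\leq Ce^U$ on $B_{R/\delta_{j,n}}(0)$, together with $|\tilde v_{j,n}^l|\leq 1$; dominated convergence then applies. Everything else is a direct assembly of these limit integrals with the orthogonality relation.
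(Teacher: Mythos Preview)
Your proof is correct and follows essentially the same approach as the paper's: reduce Dirichlet inner products to weighted $L^2$ inner products via the eigenvalue equation, localize around the blow-up points, rescale, and pass to the limit by dominated convergence using \eqref{2.6a}, then evaluate the resulting integrals with Lemma~\ref{l6.2} and Remark~\ref{remark-2.4}. Your explicit justification of the dominated convergence step is slightly more detailed than the paper's, which simply states that each limit is justified by \eqref{2.6a} and dominated convergence, but the argument is identical in substance.
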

\begin{proof}
Each limit below is justified by (\ref{2.6a}) and the dominated convergence theorem.
\begin{enumerate}
\item[$i)$]
From {\rosso{\eqref{autov-n}}} we have
$$\intO |\na \vn^l|^2\, dx=-\intO (\Delta \vn^l)  \vn^l\, dx=\mn \ln \intO e^{\un}(\vn^l)^2\, dx$$
so that
\begin{eqnarray*}
& & \frac 1{\mn} \intO |\na \vn^l|^2\, dx=\ln \intO e^{\un}(\vn^l)^2\, dx\\
& & =\sum_{j=1}^m \ln \int_{B_R(x_{j,n})} e^{\un}(\vn^l)^2\, dx+\ln \int_{\Om \setminus \bigcup_{j=1}^mB_R(x_{j,n})}  e^{\un}(\vn^l)^2\, dx\\
& & =\sum_{j=1}^m\int_{B_{\frac R{\d_{j,n}}}(0)} e^{\tu_{j,n}}(\tv_{j,n}^l)^2\, d\til{x}+o(\ln) \\
& & \tend\sum_{j=1}^m\int_{\R^2}e^U (c_j^l)^2\, d\til{x}=8\pi\sum_{j=1}^m(c_j^l)^2=8\pi\nor\c^l\nor_{\R^m}^2.
\end{eqnarray*}
{\rosso{By Proposition \ref{newp3.10} $i)$ follows.}}
\item[$ii)$]
If $l\not=l'$, we have
$$0=\intO \na \vn^l\cdot\na \vn^{l'}\,dx=-\intO \Delta \vn^l\vn^{l'}\,dx= \mn^l \ln\intO e^{\un}\vn^l\vn^{l'}\, dx.$$
Since $\mn^l>0$, it holds that
\begin{align*}
0&=\ln\intO e^{\un}\vn^l\vn^{l'}\, dx\\
&=\sum_{j=1}^m \ln \int_{B_R(x_{j,n})} e^{\un}\vn^l\vn^{l'}\, dx+\ln \int_{\Om \setminus \bigcup_{j=1}^mB_R(x_{j,n})}  e^{\un}\vn^l\vn^{l'}\, dx\\
&=\sum_{j=1}^m \int_{B_{\frac R{\d_{j,n}}}(0)} e^{\tu_{j,n}}\tv_{j,n}^l\tv_{j,n}^{l'}\, d\til{x}+o(\ln)\\
&\tend\sum_{j=1}^m\int_{\R^2}e^U c_j^lc_j^{l'}\, d\til{x}=8\pi \c^l\cdot\c^{l'}.
\end{align*}
\item[$iii)$]
Similarly to the first case it holds that
\begin{align*}
&\frac 1{\mn^l} \intO |\na \vn^l|^2\, dx\\
&=\ln \intO e^{\un}(\vn^l)^2\, dx=\sum_{j=1}^m\int_{B_{\frac R{\d_{j,n}}}(0)} e^{\tu_{j,n}}(\tv_{j,n}^l)^2\, d\til{x}+o(\ln)\\
&\tend\sum_{j=1}^m\int_{\R^2}e^U (\bm{a}_j^l\cdot\nabla U+b_j^l\clo{U})^2\, d\til{x}=\frac{4\pi}{3}\nor\a^l\nor_{\R^{2m}}^2+\frac{32\pi}{3}\nor\b^l\nor_{\R^m}^2,
\end{align*}
(see Lemma \ref{l6.2} for the last integration).
\item[$iv)$]
The proof is similar to the second and the third cases.
\end{enumerate}
\end{proof}

%%%%%%%%%%%%%%%%%%%%%%%%%%%%%%%%%SEZIONE
\sezione{Estimate of eigenvalues}\label{s4}
%%%%%%%%%%%%%%%%%%%%%%%%%%%%%%%%%%%%%%%%%%%%%%%%%%%%%%%%%%%%%%
In this section we provide the estimates \eqref{10}, \eqref{10_1} and \eqref{10_2}.
\sottosezione{Estimates of $\mn^1, \dots,\mn^{m}$}\label{sott-1}
\noindent
The asymptotic behavior of the eigenvalues of (\ref{autov-n}) is estimated inductively. Let us start to study $\mn^1$. \\[.5cm]
Take a cut-off function $\xi \in C^{\infty}_0 ([0,+\infty))$ %, $\xi_r(0)=0$
 satisfying
$$\xi(r):=\left\{
\begin{array}{ll}
1 & \hbox{ if } 0\leq r\leq 1\\
0&\hbox{ if } r\geq 2
\end{array}
\right.
\quad 0\leq \xi(r)\leq 1,
$$
and set
\begin{equation}\label{xi-n}
\xi_n(x):=\xi\left( \frac{|x-x_{i,n}|}R\right).
\end{equation}
for some $i\in \{1,\dots, m\}$.
\begin{proposition}\label{p3.3}
The first eigenvalue $\mn^1$ of \eqref{autov-n} satisfies
$$\mn^1\tend 0\quad \text{ as } n\to \infty.$$
\end{proposition}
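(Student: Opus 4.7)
The plan is to use the Rayleigh quotient characterization
$$
\mu_n^1 \;=\; \inf_{v\in H_0^1(\Omega)\setminus\{0\}}\; \frac{\int_\Omega |\nabla v|^2\,dx}{\int_\Omega \lambda_n e^{u_n} v^2\,dx}
$$
and to exhibit an explicit test function whose Rayleigh quotient tends to zero. Since $\mu_n^1>0$ (the operator is positive), this is enough.

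My first instinct would be to try the cut-off $\xi_n$ just defined in \eqref{xi-n}, localized around one blow-up point $\kappa_i$. Since $\xi_n\equiv 1$ on $B_R(x_{i,n})$, the denominator satisfies $\int_\Omega \lambda_n e^{u_n}\xi_n^2\,dx \ge \int_{B_{R/\delta_{i,n}}(0)} e^{\tilde u_{i,n}}\,d\tilde x \to \int_{\mathbb{R}^2} e^U = 8\pi$ by \eqref{2.4}--\eqref{2.6a}, while $\int_\Omega|\nabla\xi_n|^2\,dx=O(1)$. This yields only $\mu_n^1\leq C$, which is not sharp enough. The lesson is that I need a test function whose Dirichlet energy is vanishing, while still picking up a positive mass at the bubble scale.

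The natural fix is a logarithmic cut-off concentrated on the scale $\delta_{i,n}$ of the bubble. For some fixed $i\in\{1,\dots,m\}$, set
$$
\phi_n(x)=
\begin{cases}
1 & |x-x_{i,n}|\le \delta_{i,n},\\[3pt]
\dfrac{\log(R/|x-x_{i,n}|)}{\log(R/\delta_{i,n})} & \delta_{i,n}\le |x-x_{i,n}|\le R,\\[3pt]
0 & |x-x_{i,n}|\ge R.
\end{cases}
$$
A direct polar-coordinate computation gives $\int_\Omega|\nabla\phi_n|^2\,dx = 2\pi/\log(R/\delta_{i,n})$, which by \eqref{2.6b} is of order $1/|\log\lambda_n|$. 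For the denominator, rescaling via \eqref{2.4}--\eqref{2.5} and using \eqref{2.6}, \eqref{2.6a} together with dominated convergence,
$$
\int_\Omega \lambda_n e^{u_n}\phi_n^2\,dx \;\ge\; \int_{B_{\delta_{i,n}}(x_{i,n})}\!\!\lambda_n e^{u_n}\,dx
=\int_{B_1(0)} e^{\tilde u_{i,n}}\,d\tilde x \;\longrightarrow\; \int_{B_1(0)} e^U \;=\; \frac{8\pi}{9}>0.
$$
Consequently $\mu_n^1 \le R(\phi_n) = O(1/|\log\lambda_n|) \to 0$.

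There is really no obstacle here: the only subtle point is the choice of scale in the test function. A fixed-scale cut-off like $\xi_n$ gives both numerator and denominator of order $1$ and so only a bounded upper bound; the logarithmic interpolation between the inner scale $\delta_{i,n}$ (where $\lambda_n e^{u_n}$ concentrates) and the outer scale $R$ is precisely what makes the Dirichlet energy vanish while preserving a positive lower bound on the weighted $L^2$-norm. Note that the rate $O(1/|\log\lambda_n|)$ obtained from this test function is already consistent with the sharp asymptotics $\mu_n^k = -\frac{1}{2\log\lambda_n} + o(\frac{1}{\log\lambda_n})$ stated in \eqref{10}, so the same construction (suitably orthogonalized over the $m$ blow-up points) is likely the starting point for the estimates of $\mu_n^2,\dots,\mu_n^m$ in the subsequent steps of Section~\ref{sott-1}.
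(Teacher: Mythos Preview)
Your proof is correct and yields the right rate $O(1/|\log\lambda_n|)$, but the paper chooses a different test function. Instead of a logarithmic cut-off interpolating between the scales $\delta_{i,n}$ and $R$, the paper takes $v_n=\xi_n u_n$, i.e.\ the fixed cut-off multiplied by the solution itself. The point is that $u_n(x_{i,n})\sim -2\log\lambda_n$ is large, so after rescaling both numerator and denominator blow up: one finds
\[
\int_\Omega|\nabla(\xi_nu_n)|^2\,dx=-16\pi\log\lambda_n+o(\log\lambda_n),\qquad
\lambda_n\!\int_\Omega e^{u_n}(\xi_nu_n)^2\,dx=32\pi(\log\lambda_n)^2+o\bigl((\log\lambda_n)^2\bigr),
\]
and the ratio is again $-\frac{1}{2\log\lambda_n}+o(1/\log\lambda_n)$. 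Your capacity-type argument is more elementary in that it needs nothing about $u_n$ beyond the scaling \eqref{2.5}--\eqref{2.6a}, whereas the paper's computation requires the asymptotics \eqref{2.6c} of $u_n(x_{i,n})$. On the other hand, the paper's choice pays off later: the same estimates \eqref{3.4a} and \eqref{3.4e} are quoted verbatim in the inductive step for $\mu_n^2,\dots,\mu_n^m$ (Proposition~\ref{p3.6}), where the test function $\xi_nu_n$ is orthogonalized against the lower eigenfunctions. Your $\phi_n$ could be used there too, but the bookkeeping would differ.
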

\begin{proof}
By the classical Rayleigh-Ritz variational formula, it holds that
$$\mn^1=
\inf_{\substack{
v\in H^1_0(\Om) \\
v\not\equiv 0}}
\frac{\int_\Om |\na v|^2\, dx}{\ln\int_\Om e^{\un} v^2\, dx}\leq \frac{\int_{\Om}|\na \vn|^2\, dx}{\ln\int_\Om e^{\un}\vn^2\, dx}
$$
for any $\vn\in H^1_0(\Om)$. Let $\vn:=\xi_n \un$.

Using the asymptotic behavior of $\un$, we have
\begin{align}
\int_{\Om}|\na \vn|^2\, dx&=\intO |\na (\xi_n \un)|^2\, dx= \int_{    B_R(x_{i,n})}\!\!\!\! |\na \un|^2\, dx +O(1)\nonumber\\
&=\int_{\de  B_R(x_{i,n})} \dfrac{\de}{\de \nu}\un\cdot \un\, d\sigma -\int_{    B_R(x_{i,n})}\!\!\!\!\Delta \un \un \, dx+O(1)\nonumber\\
&=\ln\int_{    B_R(x_{i,n})}\!\!\!e^{\un}\un \, dx+O(1)\nonumber\\
&=\un(x_{i,n}) \int_{B_{\frac R{\d_{i,n}}}(0)}\!\!\!\!\! e^{\tu_{i,n}}\, d\til{x}+\int_{B_{\frac R{\d_{i,n}}}(0) }  \!\!\!\!\!   e^{\tu_{i,n}} \tu_{i,n}\, d\til{x}+O(1)\nonumber\\
&=\un(x_{i,n}) \left\{8\pi+o(1)\right\}+O(1)\nonumber\\
&=-16\pi\log\ln+o(\log\ln)\label{3.4a}
\end{align}
as $n\to \infty$.  Also, we have
\begin{align}\label{3.4e}
\ln\int_\Om e^{\un}\vn^2\, dx&=\ln \intO e^{\un}\xi_n^2 \un^2 \, dx= \ln \int_{    B_R(x_{i,n})} e^{\un}\un^2\, dx+O\big( \ln\big)\nonumber\\
&= \int_{B_{\frac R{\d_{i,n}}}(0)}\!\!\!\!\! e^{\tu_{i,n}}\tu_{i,n}^2\, d\til{x}+2 \un(x_{i,n})\int_{B_{\frac R{\d_{i,n}}}(0)}\!\!\!\!\! e^{\tu_{i,n}}\tu_{i,n}\, d\til{x}\nonumber\\
&\quad+\Big(\un(x_{i,n})\Big)^2\int_{B_{\frac R{\d_{i,n}}}(0)}\!\!\!\!\! e^{\tu_{i,n}}\, d\til{x}+o(1)\nonumber\\
&=\left\{8\pi +o(1)\right\}\Big(\un(x_{i,n})\Big)^2+O\left(\un(x_{i,n})\right)
\nonumber\\
&=32\pi\Big(\log\ln\Big)^2+o\left(\left(\log\ln\right)^2\right).
\end{align}

From \eqref{3.4a} and \eqref{3.4e} we deduce
\begin{align*}
0\leq \mn^1&\leq\frac {-16\pi\log\ln+o(\log\ln)}{32\pi\Big(\log\ln\Big)^2+o\left(\left(\log\ln\right)^2\right)}=-\frac{1}{2\log\ln} +o\left(\frac{1}{\log\ln}\right)\tend 0
\end{align*}
and this concludes the proof.
\end{proof}
\begin{remark}
By Proposition \ref{p3.3} there exists $\c^1\in \R^m\setminus\{0\}$ such that
$$
\tv_{j,n}^1\ra c_j^1\quad \text{ in }C^{2,\alpha}_{loc} (\R^2),
$$
by Remark {\rosso{\ref{remark-2.4}}} and Proposition \ref{newp3.8}.
\end{remark}
%%%%%%%%%%%%%%%%%%%%%%%%%%%%%%%
%%%%%%%%%%%%%%%%%%%%%%%%%%%%%%%
%\sottosezione{$k$-th eigenvalue ($k=2,\dots,m$)}
%\subsection{$k$-th eigenvalue ($k=2,\dots,m$).}
In order to prove the estimate for $\mn^2,\dots,\mn^m$ we will proceed by induction.
\begin{proposition}\label{p3.6}
It holds that $\mn^k\to 0$ for any $k=2,\dots,m$.
\end{proposition}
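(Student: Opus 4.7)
The plan is to apply the Courant-Fischer min-max characterization
\[ \mn^k = \inf_{\substack{V \subset \Hd{\Omega} \\ \dim V = k}} \max_{\substack{v \in V \\ v \not\equiv 0}} \frac{\int_\Omega |\nabla v|^2\, dx}{\lambda_n \int_\Omega e^{u_n} v^2\, dx}, \]
with a $k$-dimensional test space built from the same construction used for $\mn^1$, but replicated at each of the $m$ distinct blow-up points. Concretely, for each $i=1,\dots,m$, define $\xi_n^{(i)}(x) := \xi(|x-x_{i,n}|/R)$ as in \eqref{xi-n} and set $\phi_n^{(i)} := \xi_n^{(i)} u_n \in \Hd{\Omega}$. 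Since the balls $B_{2R}(\kappa_i)$ were chosen pairwise disjoint, for $n$ large the supports of $\phi_n^{(1)}, \dots, \phi_n^{(m)}$ are pairwise disjoint, so these functions are linearly independent and $V_n := \mathrm{span}\{\phi_n^{(1)},\dots,\phi_n^{(k)}\}$ is a genuine $k$-dimensional subspace (which is why the argument works up to $k=m$, and no further).

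Since the supports are disjoint, for any $v = \sum_{i=1}^k \alpha_i \phi_n^{(i)} \in V_n \setminus\{0\}$ both the numerator and the denominator of the Rayleigh quotient split additively as sums of single-blow-up contributions, with no cross terms. I would then observe that the computations \eqref{3.4a} and \eqref{3.4e} were carried out using nothing but the rescaling \eqref{2.4}--\eqref{2.6c} at one blow-up point and they apply verbatim at each $\kappa_i$ (the parameter $d_i$ in \eqref{2.6b} differs from point to point, but only $O(1)$ corrections result and they do not affect the leading order). Thus for every $i=1,\dots,m$,
\[ \int_\Omega |\nabla \phi_n^{(i)}|^2\, dx = -16\pi\log\lambda_n + o(\log \lambda_n), \quad \lambda_n \int_\Omega e^{u_n} (\phi_n^{(i)})^2\, dx = 32\pi (\log\lambda_n)^2 + o((\log\lambda_n)^2). \]

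Combining these, for every $v \in V_n\setminus\{0\}$ the Rayleigh quotient equals
\[ \frac{\sum_{i=1}^k \alpha_i^2\bigl[-16\pi\log\lambda_n + o(\log\lambda_n)\bigr]}{\sum_{i=1}^k \alpha_i^2\bigl[32\pi(\log\lambda_n)^2 + o((\log\lambda_n)^2)\bigr]} = -\frac{1}{2\log\lambda_n} + o\!\left(\frac{1}{\log\lambda_n}\right), \]
uniformly in $(\alpha_1,\dots,\alpha_k)$. Since $\mn^k > 0$ from the variational formulation, the min-max principle applied to $V_n$ yields
\[ 0 < \mn^k \leq -\frac{1}{2\log\lambda_n} + o\!\left(\frac{1}{\log\lambda_n}\right) \tend 0. \]

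I do not expect any real obstacle here; the only thing worth double-checking is that one genuinely gets a $k$-dimensional test space (which is why the disjointness of the supports is essential, and why this upper-bound strategy naturally caps out at $k=m$). Incidentally, the same computation shows $\mn^k$ decays at the rate $-1/(2\log\lambda_n)$, consistent with \eqref{10}; it does not by itself prove the matching lower bound, which is the task left to later sections.
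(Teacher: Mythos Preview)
Your argument is correct and is genuinely simpler than the paper's. The paper proceeds by induction on $k$: assuming $\mn^l\to 0$ for $l<k$, it takes a \emph{single} localized test function $\xi_n u_n$ at one blow-up point $x_{i,n}$ and projects it off $\mathrm{span}\{v_n^1,\dots,v_n^{k-1}\}$. This forces it to compute the asymptotics of the projection coefficients $s_n^l$, which in turn requires the limits $\tilde v_{j,n}^l\to c_j^l$ and the orthogonality relations of Proposition~\ref{newp3.15}; the index $i$ is then chosen so that $e_i\notin\mathrm{span}\{\c^1,\dots,\c^{k-1}\}$, guaranteeing the leading coefficients do not cancel. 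Your approach bypasses all of this by using the full $k$-dimensional subspace spanned by $\xi_n^{(1)}u_n,\dots,\xi_n^{(k)}u_n$: disjoint supports make the Rayleigh quotient diagonal, and the single-bubble computations \eqref{3.4a}--\eqref{3.4e} apply at each bubble separately. One small remark: the paper only assumes $B_R(\kappa_i)$ pairwise disjoint, not $B_{2R}(\kappa_i)$, so strictly speaking you should shrink $R$; this is of course harmless. The trade-off is that the paper's orthogonalization machinery, while heavier here, is exactly what is reused for the harder range $m+1\le k\le 3m$ in Proposition~\ref{newp4.5}, where disjoint-support test functions alone no longer suffice.
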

\begin{proof}
Given $k=2,\cdots, m$, we suppose the $l$-th eigenvalue $\mn^l$ tends to 0 as $n\to +\infty$ for any $l\in\{1,\dots,k-1\}$.  Now we shall show $\mn^k\to 0$. Recall that $v_n^l$ denotes the $l$-th eigenfunction of (\ref{autov-n}).  By the above inductive hypothesis, there exist $\c^1,\dots,\c^{k-1}\in \R^m\setminus\{0\}$ and a sub-sequence of $\vn^l$ (denoted by the same symbol) such that $\tv_{j,n}^l\ra c_j^l$ in $C^{2,\alpha}_{loc} (\R^2)$ for any $l=1,\dots,k-1$ and $j=1,\dots,m$.
Moreover, Remark \ref{newr3.5} implies
\begin{equation}\label{conv-zero}
\vn^l\tend 0 \quad \text{in $\Co{1}{\clo{\Omega}\backslash\cup_{j=1}^m\ball{R}{\kappa_j}}$}
\end{equation}
for $l\in \{1,\dots,k-1\}$ as $n\to \infty$.

From the variational characterization of the eigenvalues we have
$$
\mn^k= \inf_{\substack{
v\in H^1_0(\Omega)\,,\, v\neq 0\\
v\perp \mathrm{span} \{\vn^1,\dots,\vn^{k-1}\}}}
\frac{\intO |\na v|^2 \, dx}{\ln \intO e^{\un} v^2 \, dx}.$$

We take the cut-off function $\xi \in C^{\infty}_0 ([0,+\infty))$ and define $\xi_n(x)$ by \eqref{xi-n}, with $i\in \{1,\dots, m\}$ determined later. Put
$$\vn:=\xi_n \un- s_n^1\vn^1-\dots-s_n^{k-1}\vn^{k-1}$$
with
\begin{equation}\label{3.13.5}
s_n^l:=\frac{\intO \na\left( \xi_n\un\right)\cdot \na \vn^l \,dx}{\intO |\na \vn^l|^2\, dx}.
\end{equation}
Then it follows that $\vn \perp \mathrm{span}\{\vn^1,\dots,\vn^{k-1}\}$ under the Dirichlet norm.

First, we show the following,

\bigskip

\noindent {\bf Claim:} It holds that
\begin{equation}\label{claim}
s_n^l=-\frac{2c_i^l}{\nor\c^l\nor_{\R^m}^2}\log\ln+o(\log\ln) \quad \hbox{ as }n\to +\infty.
\end{equation}
for $l=1,\dots,k-1$.
\begin{proof}[Proof of the Claim]
%By Proposition \ref{newp3.15} we {\rosso{know the }} behavior of $\intO |\na \vn^l|^2\, dx$ for $l=1,\dots,k-1$ as $n\rightarrow\infty$.
By \eqref{autov-n}, it holds that
\begin{align*}
&\intO \na( \xi_n\un)\cdot \na \vn^l\, dx=-\intO \xi_n\un\Delta \vn^l\,dx=\mn^l \ln\intO e^{\un}\vn^l\xi_n \un\,dx\\
&=\mn^l \ln \int_{    B_R(x_{i,n})} \!\!\!\!\!e^{\un}\vn^l\un\, dx+\mn^l\ln\int_{\Om \setminus B_R(x_{i,n})}\!\!\!\!\! e^{\un}\vn^l \xi_n\un \, dx\\
&=\mn^l \un(x_{i,n})\int_{B_{\frac R{\d_{i,n}}}(0)}\!\!\!\!\! e^{\tu_{i,n}}\tv_{i,n}^l\, d\til{x}+\mn^l \int_{B_{\frac R{\d_{i,n}}}(0) }  \!\!\!\!\!   e^{\tu_{i,n}} \tv_{i,n}^l \tu_{i,n}\, d\til{x} +o\Big( \mn^l\ln\Big)  \\
&=\left\{-\frac{1}{2\log\ln}+o\left(\frac{1}{\log\ln}\right)\right\}\left\{-2\log\ln+O(1)\right\}\left\{8\pi c_{\rosso{i}}^l+o(1)\right\}+o(1)\\
&=8\pi c_i^l+o(1).
\end{align*}
Then (\ref{3.13.5}) {\rosso{ and $i)$ of Proposition \ref{newp3.15} imply}}
$$
s_n^l=\frac{8\pi c_i^l+o(1)}{-\frac{4\pi}{\log\ln}\nor\c^l\nor_{\R^m}^2+o\left(\frac{1}{\log\ln}\right)}
$$
and the proof of the claim is complete.
\end{proof}

\bigskip

For $\vn=\xi_n \un -s_n^1\vn^1-\dots-s_n^{k-1}\vn^{k-1}$, we have
$$\mn^l\leq \frac{\intO |\na \vn|^2\, dx}{\ln \intO e^{\un}\vn^2\,dx}.$$
Moreover, using \eqref{3.13.5}, we get
\begin{align}\label{3.4.0}
&\intO |\na \vn|^2\, dx=\intO |\na \big(\xi_n \un -s_n^1\vn^1-\dots-s_n^{k-1}\vn^{k-1}\big)|^2\, dx\\
&=\intO |\na (\xi_n \un)|^2\, dx-2\sum_{l=1}^{k-1} s_n^l \intO \na (\xi_n \un)\cdot \na \vn^l\, dx+\sum_{l=1}^{k-1} \big(s_n^l\big)^2 \intO |\na  \vn^l|^2\, dx\nonumber\\
&=\intO |\na (\xi_n \un)|^2\, dx-\sum_{l=1}^{k-1} \big(s_n^l\big)^2 \intO |\na  \vn^l|^2\, dx.\nonumber
\end{align}
%For the behavior of $\intO |\na (\xi_n \un)|^2\, dx$ we shall use \eqref{3.4a}.
{\rosso{By}} \eqref{claim} and Proposition \ref{newp3.15} {\rosso{we get}}
\begin{align*}
&\big( s_n^l\big)^2\intO |\na \vn^l|^2\, dx\\
&=\left( -\frac{2 c_i^l}{\nor\c^l\nor_{\R^m}^2}\log\ln+o(\log\ln)\right)^2\left(-\frac{4\pi}{\log\ln}\nor\c^l\nor_{\R^m}^2+o\left(\frac{1}{\log\ln}\right)\right)\\
&=-\frac{16\pi (c_i^l)^2}{\nor\c^l\nor_{\R^m}^2}\log\ln+o(\log\ln).
\end{align*}
{\rosso{Then \eqref{3.4.0} becomes, using \eqref{3.4a}}}
\begin{equation}
\intO |\na \vn|^2\, dx
=-16\pi \left\{1-\sum_{l=1}^{k-1}\frac{  (c_i^l)^2}{\nor \c^l\nor_{\R^m}^2}\right\}\log\ln+o(\log\ln).
 \label{eqn:3.8}
\end{equation}
%by \eqref{3.4.0}.

Next, we have
\begin{align}\label{3.4d}
\ln\intO e^{\un}\vn^2\, dx&=\ln \intO e^{\un}\xi_n^2 \un^2 \, dx- 2\sum_{l=1}^{k-1} \ln s_n^l \intO e^{\un}\xi_n \un\vn^l\,dx\nonumber\\
&+\sum_{ l,l'=1} ^{k-1} \ln s_n^l s_n^{l'}\intO e^{\un}\vn^l\vn^{l'}\,dx.
\end{align}
Here we have
\begin{align*}
&\sum_{l=1}^{k-1}\ln s_n^l\int_\Omega e^{u_n}\xi_n u_nv_n^ldx=-\sum_{l=1}^{k-1}s_n^l\int_\Omega \lap\left(\frac{v_n^l}{\mn^l}\right)\xi_n u_ndx\\
&\qquad=\sum_{l=1}^{k-1}\frac{s_n^l}{\mn^l}\int_\Omega \nabla v_n^l\cdot\nabla (\xi_n u_n)dx=\sum_{l=1}^{k-1}\frac{(s_n^l)^2}{\mn^l}\int_\Omega |\nabla v_n^l|^2dx,
\end{align*}
{\rosso{and}}
\begin{align*}
&\sum_{ l,l'=1} ^{k-1} \ln s_n^l s_n^{l'}\intO e^{\un}\vn^l\vn^{l'}\,dx=-\sum_{ l,l'=1} ^{k-1} \frac{s_n^l s_n^{l'}}{\mn^l}\intO \lap \vn^l\cdot \vn^{l'}\,dx
\\
&\qquad=\sum_{ l,l'=1} ^{k-1} \frac{s_n^l s_n^{l'}}{\mn^l}\intO \nabla\vn^l\cdot \nabla\vn^{l'}\,dx=\sum_{ l=1} ^{k-1} \frac{(s_n^l)^2}{\mn^l}\intO |\nabla\vn^l|^2\,dx,
\end{align*}
using the assumption \eqref{ortho}. Therefore, it holds that
\begin{equation}\label{3.9a}
\ln\intO e^{\un}\vn^2\, dx=\ln \intO e^{\un}\xi_n^2 \un^2 \, dx-\sum_{ l=1} ^{k-1} \frac{(s_n^l)^2}{\mn^l}\intO |\nabla\vn^l|^2\,dx.
\end{equation}
%Concerning $\ln \intO e^{\un}\xi_n^2 \un^2 \, dx$, relation \eqref{3.4e} is available.
It holds also that
\begin{align*}
&\frac{(s_n^l)^2}{\mn^l}\intO |\nabla\vn^l|^2\,dx\\
&=\frac{\left\{ -\frac{2 c_i^l}{\nor\c^l\nor_{\R^m}^2}\log\ln+o(\log\ln)\right\}^2}{-\frac{1}{2\log\ln}+o\left(\frac{1}{\log\ln}\right)}\left\{-\frac{4\pi}{\log\ln}\nor\c^l\nor_{\R^m}^2+o\left(\frac{1}{\log\ln}\right)\right\}\\
&=\frac{32\pi(c_i^l)^2}{\nor\c^l\nor_{\R^m}^2}\left(\log\ln\right)^2+o\left(\left(\log\ln\right)^2\right).
\end{align*}
{\rosso{Then \eqref{3.9a} becomes, by \eqref{3.4e}}}
\begin{equation}
\ln \intO e^{\un}\vn^2\, dx =32\pi \left\{ 1-\sum_{l=1}^{k-1}\frac{(c_i^l)^2}{  \nor \c^l\nor_{\R^m}^2}\right\}\left(\log\ln\right)^2+o\left(\left(\log\ln\right)^2\right).
 \label{eqn:3.10}
\end{equation}

Here we choose $i\in \{1,\dots,m\}$ satisfying
\begin{equation}\label{3.5}
e_i=(0,\dots,1,0,\dots,0)\notin\C(k-1)
\end{equation}
where
$$\C(k-1):=\mathrm{span}\{\c^1,\dots,\c^{k-1}\}.$$
This is possible because $\C(k-1)$ is a $(k-1)$-dimensional subspace of $\R^m=\mathrm{span}\{e_1,\dots,e_m\}$ (see Proposition \ref{newp3.15}) and $m-(k-1)\geq 1$ by $k\leq m$.  Then, denoting by $\mathrm{proj}_{S}v$ the projection of the vector $v$ on {\rosso{a}} subspace $S$ of $\R^m$, we have, by \eqref{3.5}, $\mathrm{proj}_{\C(k-1)^{\perp}}e_i\neq 0$. Hence it follows that
\begin{align}\label{3.5a}
{\rosso{0<}}|\mathrm{proj}_{\C(k-1)^{\perp}}e_i|^2&=|e_i|^2-|\mathrm{proj}_{\C(k-1)}e_i|^2\nonumber\\
&=1-\left\{\sqrt{\sum_{l=1}^{k-1}\left(e_i\cdot \frac{\c^l}{\nor \c^l\nor_{\R^m}}\right)^2}\right\}^2=1-\sum_{l=1}^{k-1}\frac{(c_i^l)^2}{\nor \c^l\nor _{\R^m}^2}
\end{align}
which implies,  by (\ref{eqn:3.8}) and (\ref{eqn:3.10}),
\begin{align*}
0\leq \mn^k&\leq \frac{-16\pi\left\{ 1- \sum_{l=1}^{k-1}\frac{(c_i^l)^2}{\nor \c^l\nor_{\R^m}^2}\right\}\log\ln+o(\log\ln)}{32\pi\left\{1-\sum_{l=1}^{k-1}\frac{(c_i^l)^2}{\nor \c^l\nor_{\R^m}^2}\right\}\left(\log\ln\right)^2+o \left(\left(\log\ln\right)^2\right)}\\
&=-\frac{1}{2\log\ln}+o\left(\frac{1}{\log\ln}\right)\to 0.
\end{align*}
%by (\ref{eqn:3.8}) and (\ref{eqn:3.10}).
\end{proof}
{\rosso{
\begin{remark}
Propositions \ref{p3.6} and \ref{newp3.10} imply (\ref{10}).
\end{remark}}}
%%%%%%%%%%%%%%%%%%%%%%%%%%%%%%%
%%%%%%%%%%%%%%%%%%%%%%%%%%%%%%%
\sottosezione{Estimates of $\mn^{m+1},\dots,\mn^{3m}$}
Similarly to Section \ref{sott-1} we start by considering $\mu_n^k$ for $k=m+1$.

\begin{proposition}\label{newp4.3} The $m+1$-th eigenvalue $\mn^{m+1}$ of \eqref{autov-n} satisfies
$$0\leq\mn^{m+1}\leq 1+O(\ln)\quad \text{ as } n\to \infty.$$
\end{proposition}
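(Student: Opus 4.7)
The plan is to use the Rayleigh--Ritz variational characterization
\[
\mn^{m+1} = \inf_{\substack{v \in H_0^1(\Omega),\,v \neq 0 \\ v \perp \vn^1,\dots,\vn^m}} \frac{\int_\Omega |\nabla v|^2\,dx}{\ln \int_\Omega e^{\un} v^2\,dx},
\]
with orthogonality taken in the Dirichlet inner product. The bound $\mn^{m+1}\geq 0$ is trivial. For the upper bound, I would construct an admissible test function whose rescaling converges to an eigenfunction of the limit problem \eqref{new3.2} at eigenvalue $\alpha_1 = 1$. Since $\partial_{\tilde x_1}U$ is such an eigenfunction, the natural candidate is
\[
w_n := \xi_n\, \partial_{x_1}\un,
\]
with $\xi_n$ as in \eqref{xi-n} localized around a fixed blow-up point $x_{i,n}$. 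I then set $\tilde w_n := w_n - \sum_{l=1}^m t_n^l \vn^l$ with $t_n^l$ chosen by Dirichlet-orthogonality, exactly as in \eqref{3.13.5}.

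The estimate of the Rayleigh quotient then proceeds as follows. Integration by parts on $B_R(x_{i,n})$ together with $-\Delta \partial_{x_1}\un = \ln e^{\un}\partial_{x_1}\un$ and the rescaling $\tu_{i,n}$ give
\[
\int_\Omega |\nabla w_n|^2 = \frac{1}{\delta_{i,n}^2}\int_{B_{R/\delta_{i,n}}(0)} e^{\tu_{i,n}}(\partial_{\tilde x_1}\tu_{i,n})^2\,d\tilde x + O(1) = \frac{4\pi/3 + o(1)}{\delta_{i,n}^2} + O(1),
\]
where the limiting integral is the one computed in Lemma \ref{l6.2}. A parallel rescaling yields $\ln\int e^{\un}w_n^2 = (4\pi/3 + o(1))/\delta_{i,n}^2$. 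For the projection coefficients, I would use the eigenvalue equation to get
\[
\int \nabla w_n\cdot\nabla \vn^l = \mn^l\ln \int e^{\un}\vn^l\xi_n\partial_{x_1}\un = \frac{\mn^l}{\delta_{i,n}}\Bigl(c_i^l\!\int_{\R^2} e^U U_1\,d\tilde x + o(1)\Bigr) = o\!\left(\frac{\mn^l}{\delta_{i,n}}\right),
\]
the leading integral vanishing by the evenness of $U$. Combined with Proposition \ref{newp3.15}(i), this gives $t_n^l = o(1/\delta_{i,n})$, so that the projection corrections $\sum (t_n^l)^2\int|\nabla\vn^l|^2$ in the numerator and $-2\sum t_n^l\ln\int e^{\un}w_n\vn^l$ in the denominator are both of order $o(1/\delta_{i,n}^2)$. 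Using $\delta_{i,n}^2 = d_i^2\ln + o(\ln)$ from \eqref{2.6b}, I would conclude
\[
\mn^{m+1}\leq\frac{\int|\nabla \tilde w_n|^2}{\ln\int e^{\un}\tilde w_n^2} = \frac{4\pi/3 + O(\delta_{i,n}^2) + o(1)}{4\pi/3 + o(1)} = 1 + O(\delta_{i,n}^2) = 1 + O(\ln).
\]

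The main obstacle is bookkeeping rather than any single identity: numerator and denominator both blow up at the same rate $1/\delta_{i,n}^2 \sim 1/\ln$, so every subleading contribution --- the $O(1)$ boundary term produced by the cut-off, the $t_n^l$-projection corrections, and the $o(1)$ errors inside the rescaled integrals --- has to be shown to be \emph{strictly} of lower order than $1/\delta_{i,n}^2$, in order for the ratio to stabilize at $1 + O(\delta_{i,n}^2)$ rather than merely $1 + o(1)$. The symmetry identity $\int_{\R^2} e^U U_\alpha = 0$, which is what kills the leading part of $t_n^l$, is the key ingredient that makes the bookkeeping close.
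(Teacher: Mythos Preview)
Your test function and overall variational strategy are exactly those of the paper, and the identity $\int|\nabla w_n|^2 = \lambda_n\int_{B_R(x_{i,n})}e^{u_n}(\partial_{x_1}u_n)^2 + O(1)$ is correct. The gap is in your control of the projection coefficients $t_n^l$.

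Your argument for $t_n^l$ rests on the symmetry $\int_{\R^2}e^UU_1=0$, which after rescaling only tells you that
\[
\lambda_n\int_{B_R(x_{i,n})}e^{u_n}v_n^l\,\partial_{x_1}u_n=\frac{1}{\delta_{i,n}}\bigl(c_i^l\!\int_{\R^2}e^UU_1+o(1)\bigr)=o(1/\delta_{i,n}),
\]
hence $t_n^l=o(1/\delta_{i,n})$. With this, the difference between the projection corrections in numerator and denominator is
\[
\sum_{l=1}^{m}(t_n^l)^2\Bigl(\tfrac{1}{\mu_n^l}-1\Bigr)\int_\Omega|\nabla v_n^l|^2
= \sum_{l=1}^{m}(t_n^l)^2\cdot O(1)=o(1/\delta_{i,n}^2),
\]
which is \emph{not} $O(1)$. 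Since the common main term $I_n=\lambda_n\int_{B_R}e^{u_n}(\partial_{x_1}u_n)^2$ is itself $\sim c/\delta_{i,n}^2$, the Rayleigh quotient becomes $1+\bigl(O(1)+o(1/\delta_{i,n}^2)\bigr)/I_n=1+o(1)$, not $1+O(\lambda_n)$. In your final display the two ``$o(1)$'' terms are not the same quantity, so the passage to $1+O(\delta_{i,n}^2)$ is unjustified.

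The paper closes this gap by a different estimate of the same integral: applying Green's identity to $\partial_{x_\alpha}u_n$ and $v_n^l$ on $B_R(x_{i,n})$ and using the boundary bound $v_n^l=O(\mu_n^l)$ in $C^1\bigl(\overline\Omega\setminus\bigcup_jB_R(\kappa_j)\bigr)$ from Remark~\ref{newr3.5} gives
\[
(\mu_n^l-1)\int_{B_R(x_{i,n})}\lambda_ne^{u_n}v_n^l\,\partial_{x_\alpha}u_n
=\int_{\partial B_R(x_{i,n})}\Bigl(\tfrac{\partial}{\partial\nu}\partial_{x_\alpha}u_n\cdot v_n^l-\partial_{x_\alpha}u_n\,\tfrac{\partial v_n^l}{\partial\nu}\Bigr)=O(\mu_n^l),
\]
whence $\int_{B_R}\lambda_ne^{u_n}v_n^l\,\partial_{x_\alpha}u_n=O(\mu_n^l)$ and $s_n^l=o(1)$. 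Then both projection corrections are $o(1)$, they are absorbed into the $O(1)$ remainder, and the quotient is $1+O(1)/I_n=1+O(\lambda_n)$ as required.
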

\begin{proof}
We take the cut-off function $\xi_n$ defined by \eqref{xi-n} and put
$$
\vn:=\xi_n \frac{\de \un}{\de x_\alpha}- s_n^1\vn^1-\dots-s_n^{m}\vn^{m}
$$
with
\begin{equation*}
s_n^l:=\frac{\intO \na\left( \xi_n\frac{\de \un}{\de x_\alpha}\right)\cdot \na \vn^l \,dx}{\intO |\na \vn^l|^2\, dx}, \quad l=1,\cdots, m.
\end{equation*}
Then it follows that $\vn \perp \mathrm{span}\{\vn^1,\dots,\vn^{m}\}$.

\bigskip

\noindent {\bf Claim:} It holds that
\begin{equation}\label{claim2}
s_n^l=o(1) \quad \hbox{ as }n\to +\infty.
\end{equation}
for $l=1,\dots,m$.
\begin{proof}[Proof of the Claim]
%First, Proposition \ref{newp3.15} is available for $\intO |\na \vn^l|^2$ with $l=1,\cdots, m$.

By \eqref{autov-n} we have
\begin{align*}
&\intO \na\left( \xi_n\frac{\de \un}{\de x_\alpha}\right)\cdot \na \vn^l\, dx=-\intO \xi_n\frac{\de \un}{\de x_\alpha}\Delta \vn^l\,dx=\mn^l \ln\intO e^{\un}\vn^l\xi_n\frac{\de \un}{\de x_\alpha}\,dx\\
&=\mn^l \ln \int_{    B_R(x_{i,n})} \!\!\!\!\!e^{\un}\vn^l\frac{\de \un}{\de x_\alpha}\, dx+o(\mn^l).
\end{align*}
It holds that, {\rosso{ similarly to the proof of Proposition \ref{newp3.16}}},
\begin{align*}
O(\mn^l)&=\int_{\de\ball{R}{x_{i,n}}} \left\{\frac{\de}{\de\nu}\left(\frac{\de\un}{\de x_\alpha}\right)\cdot\vn^l-\frac{\de\un}{\de x_\alpha}\frac{\de}{\de\nu}\vn^l\right\}\quad \quad {\rosso{\text{ by \eqref{new3.4}}}}\\
&=\int_{\ball{R}{x_{i,n}}} \left\{\lap\left(\frac{\de\un}{\de x_\alpha}\right)\cdot\vn^l-\frac{\de\un}{\de x_\alpha}\lap\vn^l\right\}\\
&=\int_{\ball{R}{x_{i,n}}} \left\{-\ln e^{\un}\frac{\de\un}{\de x_\alpha}\vn^l+\frac{\de\un}{\de x_\alpha}\ln\mn^l e^{\un}\vn^l\right\}\\
&=(\mn^l-1)\int_{\ball{R}{x_{i,n}}}\ln e^{\un}\vn^l\frac{\de\un}{\de x_\alpha}
\end{align*}
which implies
$$
\int_{\ball{R}{x_{i,n}}}\ln e^{\un}\vn^l\frac{\de\un}{\de x_\alpha}=\frac{O(\mn^l)}{\mn^l-1}=O(\mn^l)
$$
{\rosso{since $\mn^l\ra 0$ for any $l=1,\dots,m$.}}
Consequently, we obtain, {\rosso{by $i)$ of Proposition \ref{newp3.15}}},
$$
s_n^l=-\frac{O\left((\mn^l)^2\right)+o(\mn^l)}{\mn^l\left\{8\pi\nor\c^l\nor_{\R^m}^2+o(1)\right\}}=O(\mn^l)=o(1)
\quad {\rosso{\text{ by $\nor\c^l\nor_{\R^m}\not=0$.}}}
$$
\end{proof}

\bigskip

It holds that
$$
0\leq\mn^{m+1}\leq\frac{\intO |\na v_n|^2 \, dx}{\ln \intO e^{\un} v_n^2 \, dx}.
$$
{\rosso{Then, arguing as in \eqref{3.4.0}, from \eqref{grad-v-nl} and \eqref{claim2}, we have}}
\begin{align}
\ln\intO e^{\un}\vn^2\, dx&=\ln \intO e^{\un}\left(\xi_n\frac{\de\un}{\de x_\alpha}\right)^2 \, dx-\sum_{ l=1} ^{m} \frac{(s_n^l)^2}{\mn^l}\intO |\nabla\vn^l|^2\,dx\nonumber\\
&=\int_{\ball{R}{x_{i,n}}}\ln e^{u_n}\left(\frac{\de \un}{\de x_\alpha}\right)^2\, dx+o(1).\label{**}
\end{align}
Then it follows that
\begin{align*}
\intO |\na \vn|^2\, dx&=\intO \left|\na \left(\xi_n \frac{\de \un}{\de x_\alpha}\right)\right|^2\, dx-\sum_{l=1}^{m} \big(s_n^l\big)^2 \intO |\na  \vn^l|^2\, dx\\
&=\int_{\ball{R}{x_{i,n}}} \left|\na \left(\frac{\de \un}{\de x_\alpha}\right)\right|^2\, dx+O(1)\\
&=\int_{\ball{R}{x_{i,n}}}\ln e^{u_n}\left(\frac{\de \un}{\de x_\alpha}\right)^2\, dx+O(1)\quad\quad {\rosso{\text{ by \eqref{**}}}}\\
&=\int_{\Omega}\ln e^{u_n}{\rosso{\vn^2}}\, dx+O(1).
\end{align*}
Here we have
\begin{align*}
&\ln \int_{B_R(x_{i,n})}e^{\un}\left(\frac{\de\un}{\de x_\alpha}\right)^2dx= \int_{B_{\frac R{\d_{i,n}}}(0)}\!\!\!\!\! e^{\tu_{i,n}}\frac{1}{\delta_{i,n}^2}\left(\frac{\de\til{u}_{i,n}}{\de \til{x}_\alpha}\right)^2\, d\til{x}\nonumber\\
&=\frac{1}{d_i^2\ln}\left(\int_{\R^2}e^UU_\alpha^2+o(1)\right)=\frac{4\pi}{3d_i^2\ln}\Big(1+o(1)\Big).
\end{align*}
Therefore, it holds that,{\rosso{ by \eqref{**}}}
\begin{align*}
0\leq \mn^{m+1}&\leq  {\rosso{ \frac{\intO |\na\vn|^2\, dx}{\ln\intO e^{\un}\vn^2\, dx}=}}    \frac{\ln\intO e^{\un}\vn^2\, dx+O(1)}{\ln\intO e^{\un}\vn^2\, dx}=1+\frac{O(1)}{\ln \int_{B_R(x_{i,n})}e^{\un}\left(\frac{\de\un}{\de x_\alpha}\right)^2+o(1)}\\
&=1+\frac{O(1)}{\frac{4\pi}{3d_i^2\ln}\Big(1+o(1)\Big)+o(1)}=1+O(\ln)
\end{align*}
{\rosso{since}} $d_i\not=0$.
\end{proof}
\begin{proposition}\label{newp.4.4}
We have $\mu^{m+1}_\infty=1$ and there exists an eigenvalue $\til{\eta}^{m+1}$ of 
%{\rosso{ the Hessian matrix of $DH^m(\ka_1,\cdots,\ka_m)D$}} 
the matrix $D\{\mathrm{Hess}H^m(\ka_1,\cdots,\ka_m)\}D$
 satisfying

$$
\mn^{m+1}=1-48\pi\til{\eta}^{m+1}\ln+o(\ln).
$$
\end{proposition}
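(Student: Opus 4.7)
The plan has three parts: identify the limit $\mu^{m+1}_\infty$, show that the $\bar U$-component of the rescaled limit eigenfunction vanishes, and then invoke Proposition \ref{newp3.16}.

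First, I would extract a subsequence along which $\mn^{m+1}\to\mu^{m+1}_\infty$. Proposition \ref{newp4.3} gives $\mn^{m+1}\leq 1+O(\ln)$, and the ordering $\mn^{m+1}\geq\mn^m\to 0$ supplies a lower bound, so $\mu^{m+1}_\infty\in[0,1]$. By Corollary \ref{corollary-2.12-bis}, $\mu^{m+1}_\infty$ must be an eigenvalue of the limit problem on $\R^2$, whose spectrum is $\{k(k+1)/2\}_{k\geq 0}$; this forces $\mu^{m+1}_\infty\in\{0,1\}$. To rule out $0$: by Proposition \ref{newp3.15}$(ii)$, the vectors $\c^1,\dots,\c^m$ associated with the first $m$ eigenfunctions are pairwise orthogonal in $\R^m$, and by Proposition \ref{newp3.8} each is nonzero, so together they form an orthogonal basis of $\R^m$. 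If $\mu^{m+1}_\infty=0$, the same orthogonality relation would give $\c^{m+1}\perp\c^l$ for every $l\leq m$, whence $\c^{m+1}=0$, contradicting Proposition \ref{newp3.8} applied to $v_n^{m+1}$. Hence $\mu^{m+1}_\infty=1$.

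Second, I would rule out $b^{m+1}_j\neq 0$ for any $j$. If some $b^{m+1}_j\neq 0$, Proposition \ref{newp3.12} would give
\[
\mn^{m+1}=1-\frac{3}{2\log\ln}+o\Bigl(\frac{1}{\log\ln}\Bigr),
\]
and since $\log\ln<0$ this amounts to $\mn^{m+1}-1\sim\frac{3}{2|\log\ln|}$. As $\ln=o(1/|\log\ln|)$ when $\ln\downarrow 0$, this contradicts the upper bound $\mn^{m+1}\leq 1+O(\ln)$ of Proposition \ref{newp4.3}. Therefore $b^{m+1}_j=0$ for every $j$, and Proposition \ref{newp3.8} ensures at least one $\bm{a}^{m+1}_j\neq\bm{0}$.

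Third, with $\b^{m+1}=0$ I would invoke Proposition \ref{newp3.16} to obtain the quantitative conclusion. Its hypothesis $1-\mn^{m+1}=o(\ln^{1/2})$ must first be verified: Proposition \ref{newp4.3} immediately gives $\mn^{m+1}-1\leq O(\ln)=o(\ln^{1/2})$, while the opposite inequality $\mn^{m+1}\geq 1-o(\ln^{1/2})$ is obtained by running the Pohozaev-type computation of Proposition \ref{newp3.16} along a hypothetical subsequence where $(1-\mn^{m+1})/\ln^{1/2}$ stays bounded away from zero, and showing that such a scenario is incompatible with the vanishing of $\b^{m+1}$ combined with $\bm{a}^{m+1}\neq\bm{0}$ and the refined estimates of Propositions \ref{newp3.6}, \ref{prop:2.9}, and \ref{newp3.14}. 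Once the hypothesis is secured, Proposition \ref{newp3.16} yields an eigenvalue $\tilde\eta^{m+1}$ of $D\{\mathrm{Hess}\,H^m(\ka_1,\dots,\ka_m)\}D$ with
\[
\mn^{m+1}=1-48\pi\tilde\eta^{m+1}\ln+o(\ln).
\]

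The main obstacle is this bootstrap step: Proposition \ref{newp3.16} is stated under the hypothesis $1-\mn=o(\ln^{1/2})$, but for $\mn^{m+1}$ the two-sided form of this hypothesis is not directly available from Proposition \ref{newp4.3} alone, and must be extracted by rerunning the Pohozaev identity of Proposition \ref{bi-po-prop} without pre-assuming what one ultimately wants to confirm.
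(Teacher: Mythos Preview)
Your first step---ruling out $\mu^{m+1}_\infty=0$ via the orthogonality of $\c^1,\dots,\c^m$ and Proposition \ref{newp3.8}---is precisely the paper's argument. The paper then closes in one line, ``and then Proposition \ref{newp3.16} gives the claim'', without isolating $\b^{m+1}=\bm 0$ or verifying the hypothesis $1-\mn=o(\ln^{1/2})$ of that proposition. You are right to flag this: the paper is tacit here.

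Your second step is correct and not in the paper at this point: confronting Proposition \ref{newp3.12} with the bound $\mn^{m+1}\le 1+O(\ln)$ of Proposition \ref{newp4.3} does force $b_j^{m+1}=0$ for all $j$, since $1/|\log\ln|\gg\ln$. The paper only records $\b^k=\bm 0$ afterwards (in Proposition \ref{newp4.5} and in the proof of Theorem \ref{t1}), as a post-hoc consequence of the asymptotic \eqref{new3.15} once Proposition \ref{newp3.16} has already been applied.

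For the third step, the obstacle is real but the remedy is shorter than the bootstrap you describe. Once $\b^{m+1}=\bm 0$, equation \eqref{gamma-0-jn} reads $\gamma^0_{j,n}=(1-\mn)\,o(1)+o(\ln^{1/2})$. Feeding this directly into the identity \eqref{new3.15.5} in place of \eqref{new3.15.1} (which already presupposes the hypothesis), the left-hand side picks up an extra term of size $\tfrac{1-\mn}{\ln^{1/2}}\cdot o(1)$, while the right-hand side is $\tfrac{\mn-1}{\ln}\bigl(\tfrac{4\pi}{3d_j}a_{j,\alpha}+o(1)\bigr)$. For $(j,\alpha)$ with $a_{j,\alpha}\neq 0$ a comparison of orders shows $(1-\mn)/\ln$ stays bounded, whence $1-\mn=o(\ln^{1/2})$, and Proposition \ref{newp3.16} applies verbatim. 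So the computation of Proposition \ref{newp3.16} is self-improving once $\b^{m+1}=\bm 0$; no separate contradiction argument along a subsequence is needed.
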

\begin{proof}
%Regarding Proposition \ref{newp4.3},
{\rosso{First }}we show that $\mu^{m+1}_\infty<1$ is impossible.  In fact, if $\mu^{m+1}_\infty<1$ {\rosso{ by Corollary \ref{corollary-2.12-bis} we have that }} $\mu^{m+1}_\infty=0$. Proposition \ref{newp3.15} implies the existence of $\c^{m+1}\in\R^m$ such that
$$
(V_1^{m+1},\cdots,V_m^{m+1})=\c^{m+1}\not=0
$$
and
$$
\c^l\cdot \c^{m+1}=0\quad\text{for every $l\in\{1,\cdots,m\}$}.
$$
This is impossible because $\mathrm{span}\{\c^1,\cdots,\c^m\}=\R^m$. Therefore, {\rosso{by Proposition \ref{newp4.3}}}, we derive that $\mu_\infty^{m+1}=1$, and then Proposition \ref{newp3.16} {\rosso{gives the claim.}}
\end{proof}
%%%%%%%%%%%%%%%%%%%%%%%%%%%%%%%
%%%%%%%%%%%%%%%%%%%%%%%%%%%%%%%
%\sottosezione{$k$-th eigenvalue ($k=m+2,\dots,3m$)}
%\subsection{$k$-th eigenvalue ($k=2,\dots,m$).}
\begin{proposition}\label{newp4.5}
It holds that
\begin{equation}\label{new4.2}
0\leq \mn^k\leq 1+O(\ln)\quad\text{for $k=m+2,\cdots,3m$.}
\end{equation}
\end{proposition}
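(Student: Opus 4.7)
The plan is induction on $k$ from $m+1$ up to $3m$, with base case $k=m+1$ being Proposition \ref{newp4.3}. Assume $\mn^l\leq 1+O(\ln)$ holds for all $l=m+1,\dots,k-1$. First I would identify the limit profiles of the corresponding eigenfunctions. Arguing as in Proposition \ref{newp.4.4}, $\mu_\infty^l<1$ is impossible: it would force $\mu_\infty^l=0$ via Corollary \ref{corollary-2.12-bis} and produce a limit vector $\c^l\in\R^m\setminus\{\bs 0\}$ orthogonal to $\c^1,\dots,\c^m$ by Proposition \ref{newp3.15}(ii), contradicting $\mathrm{span}\{\c^1,\dots,\c^m\}=\R^m$. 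Hence $\mu_\infty^l=1$; since $\mn^l\leq 1+O(\ln)=1+o(1/|\log\ln|)$, Proposition \ref{newp3.12} forces $\b^l=\bs 0$. Therefore the limit profile takes the form $V_j^l=\bm{a}_j^l\cdot\nabla U$ with $\a^l=(\bm{a}_1^l,\dots,\bm{a}_m^l)\neq\bs 0$ in $\R^{2m}$, and Proposition \ref{newp3.15}(iv) gives pairwise orthogonality of $\a^{m+1},\dots,\a^{k-1}$ in $\R^{2m}$.

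Next I would apply the Rayleigh--Ritz principle with the test function
\[
 v_n:=\xi_n^{(i)}\frac{\partial u_n}{\partial x_\alpha}-\sum_{l=1}^{k-1}s_n^l v_n^l,\qquad s_n^l:=\frac{\int_\Omega\nabla\bigl(\xi_n^{(i)}\partial_\alpha u_n\bigr)\cdot\nabla v_n^l\,dx}{\int_\Omega|\nabla v_n^l|^2\,dx},
\]
which is orthogonal to $v_n^1,\dots,v_n^{k-1}$ in the Dirichlet inner product. Since $\a^{m+1},\dots,\a^{k-1}$ are at most $2m-1$ pairwise orthogonal vectors in $\R^{2m}$, their span is a proper subspace, so one may choose $(i,\alpha)\in\{1,\dots,m\}\times\{1,2\}$ with $e_{(i,\alpha)}\notin\mathrm{span}\{\a^{m+1},\dots,\a^{k-1}\}$. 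The projection coefficients split into two regimes: for $l\leq m$, the Claim in the proof of Proposition \ref{newp4.3} adapts verbatim to give $s_n^l=O(\mn^l)=o(1)$; for $m+1\leq l\leq k-1$, rescaling around $x_{i,n}$ and using Proposition \ref{newp3.15}(iii) under $\b^l=\bs 0$ together with Lemma \ref{l6.2} yields
\[
 s_n^l=\frac{a_{i,\alpha}^l}{\d_{i,n}\nor\a^l\nor_{\R^{2m}}^2}\bigl(1+o(1)\bigr).
\]

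Following the cancellation scheme of Proposition \ref{newp4.3}, the orthogonality of $\{v_n^l\}$ in the Dirichlet inner product and the identities $\ln\int e^{u_n}v_n^lv_n^{l'}=\delta_{ll'}(\mn^l)^{-1}\int|\nabla v_n^l|^2$ reduce the numerator and denominator to
\[
 \int_\Omega|\nabla v_n|^2=\int_\Omega|\nabla(\xi_n^{(i)}\partial_\alpha u_n)|^2-\sum_{l=1}^{k-1}(s_n^l)^2\int_\Omega|\nabla v_n^l|^2+O(1),
\]
\[
 \ln\int_\Omega e^{u_n}v_n^2=\ln\int_\Omega e^{u_n}(\xi_n^{(i)}\partial_\alpha u_n)^2-\sum_{l=1}^{k-1}\frac{(s_n^l)^2}{\mn^l}\int_\Omega|\nabla v_n^l|^2.
\]
The $l\leq m$ contributions are $o(1)$ by the estimates above; the $m+1\leq l\leq k-1$ contributions contribute $\frac{4\pi}{3d_i^2\ln}\sum_{l=m+1}^{k-1}\frac{(a_{i,\alpha}^l)^2}{\nor\a^l\nor_{\R^{2m}}^2}(1+o(1))$. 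Since the leading orders of $\int|\nabla(\xi_n^{(i)}\partial_\alpha u_n)|^2$ and $\ln\int e^{u_n}(\xi_n^{(i)}\partial_\alpha u_n)^2$ are both $\frac{4\pi}{3d_i^2\ln}(1+o(1))$, both numerator and denominator reduce to
\[
 \frac{4\pi}{3d_i^2\ln}\left(1-\sum_{l=m+1}^{k-1}\frac{(a_{i,\alpha}^l)^2}{\nor\a^l\nor_{\R^{2m}}^2}\right)\bigl(1+o(1)\bigr)+O(1),
\]
where the parenthesised coefficient equals $|\mathrm{proj}_{\mathrm{span}\{\a^{m+1},\dots,\a^{k-1}\}^\perp}e_{(i,\alpha)}|^2>0$ by the choice of $(i,\alpha)$. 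Both terms thus share the same strictly positive leading coefficient of order $\ln^{-1}$, and the $O(1)$ corrections contribute only $O(\ln)$ to the ratio, giving $\mn^k\leq 1+O(\ln)$.

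The main obstacle is the careful bookkeeping of the projection coefficients $s_n^l$ across the two regimes $\mu_\infty^l\in\{0,1\}$, where the scales differ markedly ($o(1)$ against $\d_{i,n}^{-1}=O(\ln^{-1/2})$), together with ensuring that the cancellations in the numerator and denominator leave the same strictly positive leading coefficient. This positivity is exactly where the orthogonality of $\a^{m+1},\dots,\a^{k-1}$ from Proposition \ref{newp3.15}(iv) and the choice of $(i,\alpha)$ avoiding their span enter in an essential way.
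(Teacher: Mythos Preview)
Your approach follows the paper's strategy closely, but there is a gap in the final step where you conclude $\mn^k \leq 1 + O(\ln)$. You assert that both numerator and denominator reduce to $\frac{4\pi}{3d_i^2\ln}\bigl(1-\sum\cdots\bigr)(1+o(1)) + O(1)$ and infer that the ratio is $1+O(\ln)$. But two quantities of the form $\frac{C}{\ln}(1+\epsilon_n)+O(1)$ with $\epsilon_n = o(1)$ have ratio $1 + (\epsilon_n^{\mathrm{num}} - \epsilon_n^{\mathrm{den}})(1+o(1)) + O(\ln)$, which is only $1+o(1)$ unless the two $o(1)$ errors are shown to agree to order $O(\ln)$. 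Concretely, the difference between the $l$-th subtracted terms in numerator and denominator, for $m+1 \leq l \leq k-1$, is
\[
(s_n^l)^2\intO|\nabla v_n^l|^2 - \frac{(s_n^l)^2}{\mn^l}\intO|\nabla v_n^l|^2 = \frac{\mn^l-1}{\mn^l}(s_n^l)^2\intO|\nabla v_n^l|^2,
\]
and since $(s_n^l)^2\intO|\nabla v_n^l|^2 = O(\ln^{-1})$ by your own estimate on $s_n^l$, you need the \emph{two-sided} bound $|1-\mn^l| = O(\ln)$ for this to be $O(1)$. Your inductive hypothesis $\mn^l \leq 1 + O(\ln)$ together with $\mu_\infty^l=1$ and $\b^l=\bs 0$ gives only the one-sided bound $\mn^l-1\leq O(\ln)$; nothing you have written excludes, say, $1-\mn^l\sim\ln^{1/4}$.

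The paper closes this gap by strengthening the induction: at the start of each step it invokes Proposition~\ref{newp3.16} to upgrade the inductive hypothesis to the two-sided estimate $\mn^l = 1 - 48\pi\til{\eta}^{l}\ln + o(\ln)$, hence $1-\mn^l = O(\ln)$ (this is \eqref{new3.15}). With that in hand one gets directly
\[
\sum_{l=m+1}^{k-1}\frac{1-\mn^l}{\mn^l}(s_n^l)^2\intO|\nabla v_n^l|^2 = O(\ln)\cdot O(\ln^{-1}) = O(1),
\]
and therefore $\intO|\nabla v_n|^2 = \ln\intO e^{u_n}v_n^2 + O(1)$ as an \emph{identity between the two quantities}, not merely a matching of leading orders. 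Dividing by the denominator, which is $\sim C\ln^{-1}$ with $C>0$ by your choice of $(i,\alpha)$, then yields $\mn^k \leq 1 + O(\ln)$ as required. To repair your argument you should either carry the sharper two-sided estimate as part of the inductive hypothesis, or invoke Proposition~\ref{newp3.16} at each step as the paper does.
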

\begin{proof}
We prove \eqref{new4.2} by induction.
Let us assume that \eqref{new4.2} holds for $m+1\leq l\leq k-1$. %  we shall prove \eqref{new4.2}.  Under this induction hypothesis, we have
{\rosso{So, by Proposition \ref{newp3.16},}}  we have
\begin{equation}\label{new3.15}
\mn^l=1-48\pi\til{\eta}^{l}\ln+o(\ln)
\end{equation}
for $m+1\leq l\leq k-1$, %similarly to $k=m+1$,
where $\til{\eta}^l$ is an eigenvalue of 
%{\rosso{the Hessian matrix of $D\mathrm{Hess}H^m(k_1,\dots,k_m)D$}} 
the matrix $D\mathrm{Hess}H^m(k_1,\dots,k_m)D$.
 Then, from Proposition \ref{newp3.12}, it follows that $\b^l=\bm{0}$, for $m+1\leq l\leq k-1$,
{\rosso{and so
\begin{equation}\label{***}
\til{v}_{i,n}^l\to a_i^l\cdot\na U\quad\quad \text{ in }C^2_{loc}(\R^2).
\end{equation}}}
Now we take the cut-off function $\xi_n$ defined in \eqref{xi-n} and let
$$
\vn:=\xi_n \frac{\de \un}{\de x_\alpha}- s_n^1\vn^1-\dots-s_n^{k-1}\vn^{k-1}
$$
with
\begin{equation*}
s_n^l:=\frac{\intO \na\left( \xi_n\frac{\de \un}{\de x_\alpha}\right)\cdot \na \vn^l \,dx}{\intO |\na \vn^l|^2\, dx}.
\end{equation*}
Here the indices $i\in\{1,\cdots,m\}$ and $\alpha\in\{1,2\}$ {\rosso{will be}} chosen later. In any case it follows that $\vn \perp \mathrm{span}\{\vn^1,\dots,\vn^{k-1}\}$.
{\rosso{By Proposition \ref{newp4.3} we also have $s_n^l=o(1)$ for $1\leq l\leq m$.
%Since the behavior of $s_n^l$ for $1\leq l\leq m$ in the proof of Proposition \ref{newp4.3} is available, here we prepare the following:
Then we have}} the following

\bigskip

\noindent {\bf Claim:}  It holds that
\begin{equation}\label{claim3}
s_n^l=\left(\frac{a_{i,\alpha}^l}{d_i\nor\a^l\nor_{\R^{2m}}^2}+o(1)\right)\frac{1}{\ln^\frac{1}{2}}\quad \hbox{ as }n\to \infty
\end{equation}
for $l=m+1,\dots,k-1$.
\begin{proof}[Proof of the Claim]
We have
\begin{align*}
&\intO \na\left( \xi_n\frac{\de \un}{\de x_\alpha}\right)\cdot \na \vn^l\, dx=\mn^l \ln\intO e^{\un}\vn^l\xi_n\frac{\de \un}{\de x_\alpha}\,dx\\
&=\mn^l \ln \int_{    B_R(x_{i,n})} \!\!\!\!\!e^{\un}\vn^l\frac{\de \un}{\de x_\alpha}\, dx+o\left(1\right)
\end{align*}
and
\begin{align*}
\int_{\ball{R}{x_{i,n}}}\ln e^{\un}v_n^l\left(\frac{\de \un}{\de x_\alpha}\right)&=\frac{1}{\delta_{i,n}}\int_{\ball{\frac{R}{\delta_{i,n}}}{0}}e^{\til{u}_{i,n}}\left(\frac{\de \til{u}_{i,n}}{\de \til{x}_\alpha}\right) \til{v}_{i,n}^l\\
&=\frac{1}{\delta_{i,n}}\left\{\int_{\R^2}e^UU_\alpha\left(\bm{a}_i^l\cdot\nabla U\right)+o(1)\right\}\\
&=\frac{1}{\ln^\frac{1}{2}}\left(\frac{4\pi}{3}\frac{a_{i,\alpha}^l}{d_i}+o(1)\right).
\end{align*}
Then {\rosso{$iii)$ of }}Proposition \ref{newp3.15} implies
\begin{align*}
s_n^l&=\frac{\frac{\mn^l}{\ln^\frac{1}{2}}\left(\frac{4\pi}{3}\frac{a_{i,\alpha}^l}{d_i}+o(1)\right)+o\left(1\right)}{\frac{4\pi}{3}\nor\a^l\nor_{\R^{2m}}^2+o\left(1\right)}=\left(\frac{a^l_{i,\alpha}}{d_i\nor\a^l\nor_{\R^{2m}}^2}+o(1)\right)\frac{1}{\ln^\frac{1}{2}}.
\end{align*}
\end{proof}

It holds that
$$
0\leq \mn^k\leq\frac{\intO |\na v_n|^2 \, dx}{\ln \intO e^{\un} \vn^2 \, dx}.
$$
Here we have,  by \eqref{**},
\begin{align*}
\intO&\ln e^{\un}\vn^2 dx=\int_{\Omega}\ln e^{u_n}\left(\xi_n\frac{\de \un}{\de x_\alpha}\right)^2\, dx-\sum_{l=1}^{k-1} \frac{\big(s_n^l\big)^2}{\mn^l} \intO |\na  \vn^l|^2\, dx\\
&=\int_{\ball{R}{x_{i,n}}}\ln e^{u_n}\left(\frac{\de \un}{\de x_\alpha}\right)^2\, dx+o(1)-\sum_{l=m+1}^{k-1} \frac{\big(s_n^l\big)^2}{\mn^l} \intO |\na  \vn^l|^2\, dx.
\end{align*}
Therefore, similarly to the case $k=m+1$, it follows that
\begin{align*}
&\intO |\na \vn|^2\, dx=\intO \left|\na \left(\xi_n \frac{\de \un}{\de x_\alpha}\right)\right|^2\, dx-\sum_{l=1}^{k-1} \big(s_n^l\big)^2 \intO |\na  \vn^l|^2\, dx\\
&=\intO \left|\na \left(\xi_n \frac{\de \un}{\de x_\alpha}\right)\right|^2\, dx-\sum_{l=1}^{m} \big(s_n^l\big)^2 \intO |\na  \vn^l|^2\, dx-\sum_{l=m+1}^{k-1} \big(s_n^l\big)^2 \intO |\na  \vn^l|^2\, dx\\
&=\int_{\ball{R}{x_{i,n}}}\ln e^{u_n}\left(\frac{\de \un}{\de x_\alpha}\right)^2\, dx+O(1)-\sum_{l=m+1}^{k-1} \big(s_n^l\big)^2 \intO |\na  \vn^l|^2\, dx\\
&=\intO\ln e^{\un}\vn^2 dx+O(1)+\sum_{l=m+1}^{k-1} \frac{1-\mn^l}{\mn^l}\big(s_n^l\big)^2 \intO |\na  \vn^l|^2\, dx{\rosso{\text{ by }\eqref{***}}}.
\end{align*}
{\rosso{Then}} we get, {\rosso{from $iii)$ of  Proposition \ref{newp3.15}, \eqref{new3.15} and \eqref{claim3} }}
\begin{align*}
\sum_{l=m+1}^{k-1}& \frac{1-\mn^l}{\mn^l}\big(s_n^l\big)^2 \intO |\na  \vn^l|^2\, dx\\
&=O(\lambda_n)\cdot\left(\frac{a_{i,\alpha}^l}{d_i\nor\a^l\nor_{\R^{2m}}^2}+o(1)\right)^2\frac{1}{\ln}\cdot O(1)=O(1).
\end{align*}
%from the case 3 of Proposition \ref{newp3.15}, \eqref{new3.15}, and \eqref{claim3}.
Consequently it holds that
\begin{equation}\label{ciao}
\intO |\na \vn|^2\, dx=\intO\ln e^{\un}\vn^2 dx+O(1).
\end{equation}

On the other hand, 
%{\rosso{by \eqref{***}}}
 by \eqref{claim3},
 we get
\begin{align}
\intO\ln e^{\un}\vn^2 dx&=\int_{\ball{R}{x_{i,n}}}\ln e^{u_n}\left(\frac{\de \un}{\de x_\alpha}\right)^2\, dx+o(1)-\sum_{l=m+1}^{k-1} \frac{\big(s_n^l\big)^2}{\mn^l} \intO |\na  \vn^l|^2\, dx\nonumber\\
%&=\frac{4\pi}{3 d_i^2\ln}\left(1+o(1)\right)+o(1)-\sum_{l=m+1}^{k-1} \frac{\big(s_n^l\big)^2}{\mn^l} \intO |\na  \vn^l|^2\, dx\\
&=\frac{4\pi}{3 d_i^2\ln}\left(1+o(1)\right)-\sum_{l=m+1}^{k-1}\left(\frac{a^l_{i,\alpha}}{d_i\nor\a^l\nor_{\R^{2m}}^2}+o(1)\right)^2\frac{1}{\ln}\nonumber\\
&\qquad\qquad\times\left\{\frac{4\pi}{3}\nor\a^l\nor_{\R^{2m}}^2+o\left(1\right)\right\}+o(1)\nonumber\\
&=\frac{4\pi}{3d_i^2\ln}\left\{\left(1-\sum_{l=m+1}^{k-1}\frac{(a^l_{i,\alpha})^2}{\nor\a^l\nor_{\R^{2m}}^2}\right)+o(1)\right\}+o(1).\label{ciao2}
\end{align}

Let $\{e_{1,1},e_{1,2},\dots,e_{m,1},e_{m,2}\}$ be the canonical basis of $\R^{2m}$ and choose $(i,\alpha)\in \{1,\dots,m\}\times\{1,2\}$ satisfying
\begin{equation}\label{new4.4}
e_{i,\alpha}\notin\A(k-1):=\mathrm{span}\{\bm{a}^{m+1},\dots,\bm{a}^{k-1}\}.
\end{equation}
Since $\A(k-1)$ is a $(k-m-1)$-dimensional subspace of $\R^{2m}$ (see Proposition \ref{newp3.15}) and {\rosso{observing that $k\leq 3m$ implies that $2m-(k-m-1)\geq 1$, we have that there exists a pair  $(i,\alpha)$ satisfying \eqref{new4.4}}}.
Then \eqref{new4.4} implies that $\mathrm{proj}_{\A(k-1)^{\perp}}e_{i,\alpha}\neq 0$, and therefore
\begin{align*}
|\mathrm{proj}_{\A(k-1)^{\perp}}&e_{i,\alpha}|^2=|e_{i,\alpha}|^2-|\mathrm{proj}_{\A(k-1)}e_{i,\alpha}|^2\\
&=1-\left\{\sqrt{\sum_{l={\rosso{m+1}}}^{k-1}\left(e_{i,\alpha}\cdot \frac{\a^l}{\nor \a^l\nor_{\R^{2m}}}\right)^2}\right\}^2=1-\sum_{l={\rosso{m+1}}}^{k-1}\frac{(a_{i,\alpha}^l)^2}{\nor \a^l\nor _{\R^{2m}}^2}>0.
\end{align*}

Finally, it follows that, by \eqref{ciao} and \eqref{ciao2}
\begin{align*}
0\leq \mn^k&\leq\frac{\intO |\na v_n|^2 \, dx}{\ln \intO e^{\un} \vn^2 \, dx}=\frac{\ln \intO e^{\un} \vn^2 \, dx+O(1)}{\ln \intO e^{\un} \vn^2 \, dx}\\
&=1+\frac{O(1)}{\frac{4\pi}{3d_i^2\ln}\left\{\left(1-\sum_{l=m+1}^{k-1}\frac{(a^l_{i,\alpha})^2}{\nor\a^l\nor_{\R^{2m}}^2}\right)+o(1)\right\}+o(1)}\\
&=1+O(\lambda_n)
\end{align*}
and the proof is complete.
\end{proof}

%%%%%%%%%%%%%%%%%%%%%%%%%%%%%%%
%%%%%%%%%%%%%%%%%%%%%%%%%%%%%%%
\sezione{Proof of Theorem \ref{t1} and Theorem \ref{t1a}}\label{se-t}
%\subsection{$k$-th eigenvalue ($k=2,\dots,m$).}
\begin{proof}[Proof of Theorem \ref{t1}]
Propositions \ref{newp4.5} and \ref{newp3.16} imply
\begin{equation}\label{new3.15-1}
\mn^k=1-48\pi\til{\eta}^{k}\ln+o(\ln)
\end{equation}
for $m+1\leq k\leq 3m$, where $\til{\eta}^k$ is an eigenvalue of 
%${\mathrm{Hess}}(DH^mD)$
$D{\mathrm{Hess}}(H^m)D$.
 Then, {\rosso{from Proposition \ref{newp3.12}}}, it follows that $\b^k=\bm{0}$ for $m+1\leq k\leq 3m$.
Let $\mn^k\tend 1$ with $k\geq 3m+1$.  Then we have $\b^k\not=0$ because {\rosso{ $iv)$ of}} Proposition \ref{newp3.15} {\rosso{implies}} that $(\a^k,\b^k)\in\R^{3m}$ is orthogonal to the $2m$-dimensional subspace of $\R^{3m}$ spanned by $(\a^{m+1},\bm{0}),\cdots, (\a^{3m},\bm{0})$.  Hence we obtain
$$
\mn^k=1-\frac{3}{2}\frac{1}{\log\ln}+o\left(\frac{1}{\log\ln}\right)\;>1\quad\text{for $n\gg 1$.}
$$
by Proposition \ref{newp3.12}.  Thus, the calculation of the (augmented) Morse index of $u_n$ is reduced to {\rosso{study}} $\mu_m^k$ for $m+1\leq k\leq 3m$ because $\mu_\infty^k=0$ for $1\leq k\leq m$.

Since
$$
\mn^{m+1}\leq \cdots\leq\mn^{3m}
$$
we have
$$
\til{\eta}^{m+1}\geq\cdots\geq\til{\eta}^{3m}.
$$
Therefore, $\til{\eta}^{k}$ is the $\{2m-(k-m)+1\}$-th eigenvalue $\eta^{2m-(k-m)+1}$ of 
%$\mathrm{Hess}(DH^mD)$. 
$D(\mathrm{Hess}H^m)D$.

As we noticed in Section \ref{s1}, the {\rosso{sign}} of the $l$-th eigenvalue of
%\[{\mathrm{Hess}}\,(DH^m(\ka_1,\cdots,\ka_m)D) \]
\[D\{{\mathrm{Hess}}\,H^m(\ka_1,\cdots,\ka_m)\}D \]
and that of $\mathrm{Hess}\,H^m(\ka_1,\cdots,\ka_m)$ {\rosso{is}} the same (see Lemma \ref{sign_eigenvalue}). {\rosso{It is easy to see}} that the $\{2m-(k-m)+1\}$-th eigenvalue of 
%$\mathrm{Hess}(DH^mD)$
$D(\mathrm{Hess}H^m)D$
 is nothing but the $(k-m)$-th eigenvalue of 
%$\mathrm{Hess}\,\{(D(-H^m(\ka_1,\cdots,\ka_m))D)\}$
 $D\{\mathrm{Hess}\,(-H^m(\ka_1,\cdots,\ka_m)\}D$.
 {\rosso{Finally}}, it follows that
\begin{align*}
&\mathrm{ind}_M\{-H^m(\ka_1,\cdots,\ka_m)\}=\#\{k\in\N\,;\, \eta^k>0\},\\
&\mathrm{ind}^\ast_M\{-H^m(\ka_1,\cdots,\ka_m)\}=\#\{k\in\N\,;\, \eta^k\geq 0\}.
\end{align*}
Theorem \ref{t1} now follows from \eqref{new3.15-1}.
\end{proof}
\begin{proof}[Proof of Theorem \ref{t1a}]
From Proposition \ref{newp3.10} and Proposition \ref{p3.6} we get \eqref{10}. \eqref{10_1} follows by Proposition \ref{newp4.5} and Proposition \ref{newp3.16}. Finally \eqref{10_2} follows by Proposition \ref{newp3.12} and Remark \ref{newr3.13}.
\end{proof}
%%%%%%%%%%%%%%%%%%%%%%%%%%%%%%%
%%%%%%%%%%%%%%%%%%%%%%%%%%%%%%%%
%%%%%%%%%%%%%%%%%%%%%%%%%%%%%%%%
\bigskip
\appendix\section{Computation of integrals}

\begin{lemma}\label{l6.2}Let $U$ be as defined in \eqref{2.6}, $U_\alpha=\frac{\de U}{\de x_\alpha}$, and $\clo{U}=x\cdot\nabla U+2$. Then it holds that
\begin{align*}
&\int_{\R^2} e^U\, dx =8\pi,\quad
\int_{\R^2} e^U U\, dx =-16\pi,\quad\int_{\R^2} e^U U_\alpha\, dx=\int_{\R^2} e^U \clo{U}\, dx=0,\\
&\int_{\R^2} e^U U^2\, dx =64\pi,\quad\int_{\R^2} e^U UU_\alpha\, dx=0,\quad\int_{\R^2} e^U U\clo{U}\, dx=16\pi,\\
&\int_{\R^2} e^U U_\alpha U_\beta\, dx=\frac{4\pi}{3}\delta_{\alpha\beta},\quad\int_{\R^2} e^U U_\alpha \clo{U}\, dx=0,\quad\int_{\R^2} e^U \clo{U}^2\, dx=\frac{32\pi}{3}.
\end{align*}
\end{lemma}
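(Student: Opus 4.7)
The plan is a direct explicit computation. Writing $U(x)=-2\log t$ with $t:=1+|x|^2/8$, we have $e^U=t^{-2}$, $U_\alpha=-x_\alpha/(2t)$, and $\clo{U}=x\cdot\nabla U+2=4/t-2$. Passing to polar coordinates $x=r(\cos\theta,\sin\theta)$ and substituting $t=1+r^2/8$, so that $r\,dr=4\,dt$ and $t\in[1,\infty)$, every integral in the statement factors as a trivial angular integral times a one-dimensional integral of the form $\int_1^\infty t^{-k}(\log t)^j\,dt$ with small $k,j$.

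First I would dispose of the four vanishing integrals. Since $U_\alpha$ is odd in $x_\alpha$ while $e^U$, $U$, and $\clo{U}$ are radial, the integrals $\int e^U U_\alpha$, $\int e^U UU_\alpha$, and $\int e^U U_\alpha\clo{U}$ all vanish by parity. For the fourth, $\int e^U\clo{U}=0$ follows from the elementary computation $\int_1^\infty(4t^{-3}-2t^{-2})\,dt=2-2=0$; conceptually, this is the derivative at $\mu=1$ of the scale-invariant quantity $\int_{\R^2}e^{U_\mu}\,dx=8\pi$ for the rescaled family $U_\mu(x):=U(\mu x)+2\log\mu$, noting that $\clo{U}=\tfrac{d}{d\mu}U_\mu\big|_{\mu=1}$.

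For the non-vanishing radial integrals I would evaluate $\int_1^\infty t^{-k}\,dt$, $\int_1^\infty t^{-k}\log t\,dt$, and $\int_1^\infty t^{-2}(\log t)^2\,dt$ by one or two integrations by parts, and then assemble the seven scalar answers. For the tensor $\int e^U U_\alpha U_\beta$, rotational invariance forces the result to be $c\,\delta_{\alpha\beta}$, and $c$ is determined from the identity $2c=\int_{\R^2}e^U|\nabla U|^2$; using $|\nabla U|^2=\frac{2(t-1)}{t^2}$ this again reduces to elementary 1D integrals and yields $c=4\pi/3$. No conceptual difficulty is expected; the only real risk is arithmetic bookkeeping, in particular tracking the Jacobian $r\,dr=4\,dt$ and the angular factor $2\pi$ consistently across all nine formulas, which is the one step where a stray factor could spoil the rational coefficients.
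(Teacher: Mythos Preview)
Your proposal is correct and is exactly the elementary direct computation the paper has in mind; in fact the paper omits the proof entirely, stating only that ``the proof of the above formulae is elementary.'' Your substitution $t=1+|x|^2/8$ with Jacobian $r\,dr=4\,dt$, together with the parity/rotational-invariance observations and the optional scaling interpretation of $\clo{U}=\tfrac{d}{d\mu}U_\mu|_{\mu=1}$, constitutes a clean and complete argument.
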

The proof of the above formulae is elementary. %{\rosso{We show formula (\ref{eqn:22}).}}

\begin{proposition}
It holds that
\begin{equation}\label{eqn:22}
\int_{\R^2}\frac{1}{|\til{x}-\til{y}|}\frac{1}{\left(1+\frac{|\til{y}|^2}{8}\right)^2}d\til{y}\leq\frac{C}{1+|\til{x}|}, \quad \forall \til{x}\in\R^2.
\end{equation}
 \label{prop:a3}
\end{proposition}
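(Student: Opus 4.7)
The estimate is a routine decay bound for the convolution of a weakly singular kernel with a fast-decaying density, so my plan is to treat the bounded and large-$|\til{x}|$ regimes separately and, in the latter, split the domain into the region near $\til{x}$ and its complement.

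For $|\til{x}|\leq 1$ I would argue that it suffices to show the integral is uniformly bounded, since $1/(1+|\til{x}|)\geq 1/2$. This bound follows by splitting the integration at $|\til{y}-\til{x}|=1$: on $|\til{y}-\til{x}|\leq 1$ the weight $(1+|\til{y}|^2/8)^{-2}$ is bounded, and $1/|\til{x}-\til{y}|$ is integrable in $\R^2$ on any disk; on $|\til{y}-\til{x}|\geq 1$ we use $1/|\til{x}-\til{y}|\leq 1$ and integrate $(1+|\til{y}|^2/8)^{-2}\in L^1(\R^2)$.

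For $|\til{x}|\geq 1$ the main step is to decompose
\[
\R^2 = A\cup B,\qquad A=\{|\til{y}-\til{x}|\leq |\til{x}|/2\},\quad B=\R^2\setminus A.
\]
On $B$ one has $|\til{x}-\til{y}|\geq |\til{x}|/2$, so the kernel factors out and leaves the finite integral of $(1+|\til{y}|^2/8)^{-2}$, producing the bound $C/|\til{x}|$. On $A$ the triangle inequality gives $|\til{y}|\geq |\til{x}|/2$, hence $(1+|\til{y}|^2/8)^{-2}\leq C/|\til{x}|^4$, while
\[
\int_A \frac{d\til{y}}{|\til{x}-\til{y}|} = \int_{|z|\leq |\til{x}|/2}\frac{dz}{|z|}=\pi|\til{x}|,
\]
so the contribution is $O(|\til{x}|^{-3})$, which is absorbed into the $C/|\til{x}|$ bound. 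Combining the two regimes yields the desired estimate with an absolute constant $C$.

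No serious obstacle is anticipated; the only point requiring a bit of care is the change of variables $z=\til{y}-\til{x}$ on $A$ and verifying the elementary inequality $|\til{y}|\geq |\til{x}|/2$ on $A$, both of which are immediate from the triangle inequality. The bound $C/|\til{x}|$ for large $|\til{x}|$ is in fact stronger than $C/(1+|\til{x}|)$, and the two estimates are patched together at $|\til{x}|=1$ by adjusting the constant.
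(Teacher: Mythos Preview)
Your proposal is correct and follows essentially the same approach as the paper: both split into the cases $|\til{x}|\leq 1$ and $|\til{x}|\geq 1$, handle the first by splitting at $|\til{y}-\til{x}|=1$, and handle the second by decomposing into $\{|\til{y}-\til{x}|\leq |\til{x}|/2\}$ and its complement with the same estimates on each piece. The paper even tracks explicit constants (obtaining $1040\pi/|\til{x}|$ in the large-$|\til{x}|$ case), but the structure of the argument is identical to yours.
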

\begin{proof}
We divide the proof into two cases.

\bigskip

\underline{Case 1}: For $|\til{x}|\leq 1$, we have
\begin{align*}
&\int_{\R^2}\frac{1}{|\til{x}-\til{y}|}\frac{1}{\left(1+\frac{|\til{y}|^2}{8}\right)^2}d\til{y}=\int_{\R^2\backslash\ball{1}{\til{x}}}+\int_{\ball{1}{\til{x}}}\\
&\leq\int_{\R^2}\frac{d\til{y}}{\left(1+\frac{|\til{y}|^2}{8}\right)^2}+\int_{\ball{1}{\til{x}}}\frac{d\til{y}}{|\til{x}-\til{y}|}=C\leq\frac{2C}{1+|\til{x}|}.
\end{align*}

\underline{Case 2}: For $|\til{x}|\geq 1$, we {\rosso{have}}
%\begin{align*}
%&\int_{\R^2}\frac{1}{|\til{x}-\til{y}|}\frac{1}{\left(1+\frac{|\til{y}|^2}{8}\right)^2}d\til{y}\leq\frac{C}{|\til{x}|}\left(\leq\frac{2C}{1+|\til{x}|}\right).
%\end{align*}
%To this end, we divide the integration into two parts:
\begin{align*}
&\int_{\R^2}\frac{1}{|\til{x}-\til{y}|}\frac{1}{\left(1+\frac{|\til{y}|^2}{8}\right)^2}d\til{y}=\int_{\R^2\backslash\ball{\frac{|\til{x}|}{2}}{\til{x}}}+\int_{\ball{\frac{|\til{x}|}{2}}{\til{x}}}\\
&\leq\int_{\R^2\backslash\ball{\frac{|\til{x}|}{2}}{\til{x}}}\frac{1}{\,\frac{|\til{x}|}{2}\,}\cdot\frac{1}{\left(1+\frac{|\til{y}|^2}{8}\right)^2}d\til{y}+\int_{\ball{\frac{|\til{x}|}{2}}{\til{x}}}\frac{1}{|\til{x}-\til{y}|}\cdot\frac{1}{\left\{1+\frac{1}{8}\left(\frac{|\til{x}|}{2}\right)^2\right\}^2}d\til{y}\\
&{\rosso{\leq \frac{16\pi}{|\til{x}|}}}+\frac{1}{\left(1+\frac{|\til{x}|^2}{32}\right)^2}\int_{\ball{\frac{|\til{x}|}{2}}{0}}\frac{d\til{y}}{|\til{y}|}\leq\frac{{\rosso{16\pi}}}{|\til{x}|}+\frac{{\rosso{1024\pi}}}{|\til{x}|^3}\leq \frac{{\rosso{1040\pi}}}{|\til{x}|},
\end{align*}
{\rosso{since}} $|\til{x}|\geq 1$.
\end{proof}
{\rosso{This estimate}} is a refinement of the one used in \cite[Lemma A.1]{GG09}.
\begin{lemma}\label{newl3.11}
There exists a constant $C$ independent of $n$ and $j$ satisfying
$$
\left|\frac{\de \til{u}_{j,n}}{\de \til{x}_\alpha}\right|\leq\frac{C}{1+|\til{x}|}\quad\text{in $\ball{\frac{R}{\d_{j,n}}}{0}$}.
$$
\end{lemma}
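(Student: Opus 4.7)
\noindent The plan is to compare $\til{u}_{j,n}$ with the limit profile $U$ and represent the difference by a Newton potential, so that the decay of the kernel estimate in Proposition \ref{prop:a3} together with the uniform $L^\infty$-bound in \eqref{2.6a} directly yields the sought gradient estimate. Set $w_n := \til{u}_{j,n} - U$ on $\Omega_n := \ball{\frac{R}{\delta_{j,n}}}{0}$. By \eqref{2.6a}, $\norm{w_n}{\Le{\infty}{\Omega_n}} \leq C$ uniformly in $n$, while a direct computation gives $|\nabla U(\til{x})| \leq C(1+|\til{x}|)^{-1}$, so it suffices to establish the same decay for $\nabla w_n$. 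The function $w_n$ satisfies
\[ -\Delta w_n = e^U(e^{w_n}-1) =: f_n \quad \text{in } \Omega_n, \qquad |f_n(\til{y})| \leq \frac{C}{(1+|\til{y}|^2/8)^2}. \]

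I would then decompose $w_n = w_n^{(1)} + w_n^{(2)}$, where $w_n^{(1)}$ is the Newton potential of $f_n$ (extended by zero outside $\Omega_n$) and $w_n^{(2)} := w_n - w_n^{(1)}$ is harmonic in $\Omega_n$. Differentiating under the integral and applying Proposition \ref{prop:a3} immediately gives
\[ |\nabla w_n^{(1)}(\til{x})| \leq \frac{C}{2\pi}\int_{\R^2} \frac{1}{|\til{x}-\til{y}|}\cdot \frac{d\til{y}}{(1+|\til{y}|^2/8)^2} \leq \frac{C}{1+|\til{x}|}. \]

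For $\nabla w_n^{(2)}$, I would apply the standard harmonic gradient estimate on the ball $B_{\rho}(\til{x})$ with $\rho := (1+|\til{x}|)/4$, whenever this ball is contained in $\Omega_n$. The oscillation of $w_n^{(2)}$ on such a ball is controlled by that of $w_n$, which is $O(1)$ by \eqref{2.6a}, plus that of $w_n^{(1)}$, which is bounded by $2\rho\cdot\sup_{B_{\rho}(\til{x})} |\nabla w_n^{(1)}|$ and is also $O(1)$ since $1+|\til{z}| \geq \tfrac{3}{4}(1+|\til{x}|)$ on that ball. This yields $|\nabla w_n^{(2)}(\til{x})| \leq C/\rho \leq C/(1+|\til{x}|)$ on the interior region.

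The remaining case is the thin annulus where $B_\rho(\til{x}) \not\subset \Omega_n$, i.e., $|\til{x}|$ is comparable to $R/\delta_{j,n}$. In the original variable this corresponds to $x \in \ball{R}{x_{j,n}}$ with $|x-x_{j,n}|$ of order $R$, where \eqref{2.2} gives convergence of $u_n$ in $C^2_{loc}(\clo{\Omega}\setminus\S)$ and hence $|\nabla u_n(x)| = O(1)$. Translating back gives $|\nabla \til{u}_{j,n}(\til{x})| = \delta_{j,n}|\nabla u_n(x)| = O(\delta_{j,n}) \leq C/(1+|\til{x}|)$, since $1+|\til{x}|$ is of order $R/\delta_{j,n}$. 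The main technical point I anticipate is verifying the oscillation bound for $w_n^{(1)}$ on balls of radius comparable to $|\til{x}|$ uniformly in $n$; this is delivered precisely by Proposition \ref{prop:a3} together with the integrable decay of $f_n$ inherited from \eqref{2.6a}.
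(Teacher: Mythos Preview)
Your argument is correct, but it follows a genuinely different route from the paper's. The paper works directly with the Green representation of $u_n$ in $\Omega$: writing
\[
\frac{\partial u_n}{\partial x_\alpha}(x)=\int_\Omega G_{x_\alpha}(x,y)\,\lambda_n e^{u_n}\,dy,
\]
it splits the integral over the balls $B_R(x_{i,n})$ and the complement. The complement and the balls with $i\neq j$ contribute $O(1)$ in original variables, as does the regular part $K_{x_\alpha}$; after multiplying by $\delta_{j,n}$ these become $O(\delta_{j,n})\leq C/(1+|\tilde x|)$ throughout $B_{R/\delta_{j,n}}(0)$. The only nontrivial piece is the singular kernel $-\frac{1}{2\pi}\frac{(x-y)_\alpha}{|x-y|^2}$ over $B_R(x_{j,n})$, which after rescaling becomes exactly the convolution $\int |\tilde x-\tilde y|^{-1}e^{\tilde u_{j,n}}\,d\tilde y$; then \eqref{2.6a} and Proposition~\ref{prop:a3} finish the proof in one stroke.

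Your approach instead subtracts the limit profile $U$, represents $w_n=\tilde u_{j,n}-U$ as a Newton potential plus a harmonic remainder, and estimates each separately, invoking interior gradient bounds for harmonic functions together with an oscillation argument, and finally treating a boundary layer via \eqref{2.2}. This is perfectly valid and relies on the same two ingredients (\eqref{2.6a} and Proposition~\ref{prop:a3}), but it is more elaborate: the harmonic piece and the boundary layer are extra bookkeeping that the paper avoids entirely because the Green representation in $\Omega$ already encodes the correct boundary behaviour. The paper's route is shorter and cleaner; yours has the mild advantage of being purely local in the rescaled picture until the very last step.
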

\begin{proof}
First, we have
\begin{align*}
&\frac{\de u_n(x)}{\de x_\alpha}=\int_\Omega G_{x_\alpha}(x,y)\ln e^{u_n}dy\\
&=\sum_{i=1}^m\int_{\ball{R}{x_{i,n}}}G_{x_\alpha}(x,y)\ln e^{u_n}dy+\int_{\Omega\backslash\cup_{i=1}^m\ball{R}{x_{i,n}}}G_{x_\alpha}(x,y)\ln e^{u_n}dy.
\end{align*}
Here it holds that
$$
\left|\int_{\Omega\backslash\cup_{i=1}^m\ball{R}{x_{i,n}}}G_{x_\alpha}(x,y)\ln e^{u_n}dy\right|\leq O(\ln)\int_\Omega\left|G_{x_\alpha}(x,y)\right|dy=O(\ln)
$$
and also, for $i\not=j$, that
$$
\int_{\ball{R}{x_{i,n}}}G_{x_\alpha}(x,y)\ln e^{u_n}dy=O(1)\quad\text{in $\ball{R}{x_{j,n}}$}.
$$
If $i=j$, on the other hand, we have
\begin{align*}
&\int_{\ball{R}{x_{j,n}}}G_{x_\alpha}(x,y)\ln e^{u_n}dy\\
&=-\frac{1}{2\pi}\int_{\ball{R}{x_{j,n}}}\frac{(x-y)_\alpha}{|x-y|^2}\ln e^{u_n}dy+\int_{\ball{R}{x_{j,n}}}K_{x_\alpha}(x,y)\ln e^{u_n}dy
\end{align*}
with
$$
\int_{\ball{R}{x_{j,n}}}K_{x_\alpha}(x,y)\ln e^{u_n}dy=8\pi K_{x_\alpha}(x,\ka_j)+o(1)=O(1)
$$
for $x\in\ball{R}{x_{j,n}}$.  Therefore, we have
\begin{align*}
\frac{\de \til{u}_{j,n}}{\de \til{x}_\alpha}&=\d_{j,n}\frac{\de u_n}{\de x_\alpha}(\d_{j,n}\til x+x_{j,n})\\
&=\d_{j,n}\left\{-\frac{1}{2\pi}\int_{\ball{\frac{R}{\d_{j,n}}}{0}}\frac{\d_{j,n}(\til{x}-\til{y})_\alpha}{\d_{j,n}^2|\til{x}-\til{y}|^2} e^{\til{u}_{j,n}}d\til{y}+O(1)\right\}
\end{align*}
and consequently,
$$
\left|\frac{\de \til{u}_{j,n}}{\de \til{x}_\alpha}\right|\leq\frac{1}{2\pi}\int_{\ball{\frac{R}{\d_{j,n}}}{0}}\frac{1}{|\til{x}-\til{y}|}e^{\til{u}_{j,n}}d\til{y}+O(\d_{j,n}).
$$

Here we may assume
$$
O(\delta_{j,n})\leq \frac{C}{1+|\til{x}|}
$$
with some $C>0$ by $\til{x}\in\ball{\frac{R}{\delta_{j,n}}}{0}$.  We have also
$$
\int_{\ball{\frac{R}{\d_{j,n}}}{0}}\frac{1}{|\til{x}-\til{y}|}e^{\til{u}_{j,n}}d\til{y}\leq C\int_{\R^2}\frac{1}{|\til{x}-\til{y}|}\frac{1}{\left(1+\frac{|\til{y}|^2}{8}\right)^2}d\til{y}
$$
with $C>0$ by \eqref{2.6a}. Therefore, the lemma follows from \eqref{eqn:22}.
\end{proof}

\section{A linear algebra lemma}
\begin{lemma}\label{sign_eigenvalue}
For every real symmetric $N\times N$ matrix $H$ and a real diagonal matrix $D=\mathrm{diag}[d_1,\cdots,d_N]$ with $d_j\not=0$ for every $j=1,\cdots, N$, let {\rosso{us denote by $\Lambda^k$ and $\til{\Lambda}^k$}} the $k$-th eigenvalue of $H$ and $DHD$. Then the {\rosso{sign}} of $\Lambda^k$ and $\til{\Lambda}^k$ {\rosso{is}} the same for every $k=1,\cdots,N$.
\end{lemma}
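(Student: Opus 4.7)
The plan is to reduce the claim to Sylvester's law of inertia. The matrix $D$ is invertible because every $d_j \neq 0$, and since $D$ is diagonal we have $D^T = D$. Therefore $DHD = D^T H D$ exhibits $DHD$ as a matrix congruent to $H$. Congruent real symmetric matrices have the same inertia (the triple of numbers of positive, zero, and negative eigenvalues), so if $H$ has $p$ positive, $z$ zero, and $n$ negative eigenvalues, then so does $DHD$.

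Now sort both spectra in increasing order, writing $\Lambda^1 \leq \Lambda^2 \leq \cdots \leq \Lambda^N$ for the eigenvalues of $H$ and $\tilde\Lambda^1 \leq \cdots \leq \tilde\Lambda^N$ for those of $DHD$. With $p+z+n = N$ fixed and equal for both matrices, the ordering forces
\[
\Lambda^k < 0 \iff k \leq n, \qquad \Lambda^k = 0 \iff n < k \leq n+z, \qquad \Lambda^k > 0 \iff k > n+z,
\]
and the identical characterization holds for $\tilde\Lambda^k$. Hence $\mathrm{sgn}(\Lambda^k) = \mathrm{sgn}(\tilde\Lambda^k)$ for every $k$, which is the conclusion.

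The only nontrivial ingredient is Sylvester's law of inertia, which is a standard result and can be invoked directly. If one prefers a self-contained argument, one can replace it by the min-max characterization: since $D$ is an invertible linear isomorphism of $\R^N$, the map $v \mapsto Dv$ is a bijection between subspaces of the same dimension, and one checks that for any $k$-dimensional subspace $V$,
\[
\min_{v \in V\setminus\{0\}} \frac{\langle DHD\,v, v\rangle}{\langle v, v\rangle} = \min_{w \in DV \setminus\{0\}} \frac{\langle H w, w\rangle}{\langle D^{-1}w, D^{-1}w\rangle},
\]
and since $\langle D^{-1}w, D^{-1}w\rangle > 0$, this quotient has the same sign as $\langle Hw, w\rangle / \langle w, w\rangle$; taking the $\max$-$\min$ over $k$-dimensional $V$ (equivalently over $k$-dimensional $DV$) gives that $\tilde\Lambda^k$ and $\Lambda^k$ agree in sign. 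There is no serious obstacle here; the statement is purely linear-algebraic and the proof fits in a few lines.
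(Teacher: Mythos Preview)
Your main argument via Sylvester's law of inertia is correct and is in fact cleaner than what the paper does. Since $D$ is invertible and symmetric, $DHD$ is congruent to $H$, so the two matrices share the same inertia; ordering both spectra the same way then forces $\mathrm{sgn}(\Lambda^k)=\mathrm{sgn}(\tilde\Lambda^k)$ for every $k$. Nothing more is needed.

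The paper takes a different, more hands-on route: it applies the Courant--Fischer characterization
\[
\tilde\Lambda^k=\min_{\dim S_k=k}\ \max_{v\in S_k\setminus\{0\}}\frac{{}^t v\,DHD\,v}{\|v\|^2},
\]
substitutes $w=Dv$, and uses the two-sided comparison $\min_j d_j^2\,\|w\|^2\le \|D^{-1}w\|^{-2}\cdot\|w\|^4\le \max_j d_j^2\,\|w\|^2$ to obtain the quantitative sandwich
\[
\Lambda^k\min_j d_j^2\le \tilde\Lambda^k\le \Lambda^k\max_j d_j^2\quad(\Lambda^k\ge 0),
\]
with the inequalities reversed when $\Lambda^k<0$. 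This gives strictly more information (explicit bounds on $\tilde\Lambda^k$ in terms of $\Lambda^k$), though that extra information is not used elsewhere in the paper. Your Sylvester argument trades these bounds for brevity.

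One small caution on your alternative sketch: with the paper's increasing ordering $\Lambda^1\le\cdots\le\Lambda^N$, the correct variational formula is $\min_{\dim V=k}\max_{v\in V}$, not $\max$--$\min$ over $k$-dimensional $V$ (the latter yields $\Lambda^{N-k+1}$). Your sign-preservation reasoning still goes through after this relabeling, but as written the indexing is off. Since your primary proof via inertia is complete on its own, this is a cosmetic point.
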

\begin{proof}
From the mini-max principle it follows that
$$
\til{\Lambda}^k= \min_{S_k}\max_{v\in S_k\backslash\{\bm{0}\}}
\frac{DHD[v]}{\nor v \nor_{\R^{N}}^2},
$$
where $S_k$ is a $k$-dimensional subspace of $\R^N$, $\min_{S_k}$ indicates the minimum among all of the $k$-dimensional subspaces, and $DHD[v]$ is the quadratic form $^tvDHDv$.

Since $D$ is also a symmetric matrix, it holds that
$$
DHD[v]=^tvDHDv=^tv\,^tDHDv=^t(Dv)H(Dv)=H[Dv]
$$
and
$$
\max_{v\in S_k\backslash\{\bm{0}\}}\frac{DHD[v]}{\nor v \nor_{\R^{N}}^2}=\max_{v\in S_k\backslash\{\bm{0}\}}\frac{H[Dv]}{\nor v \nor_{\R^{N}}^2}=\max_{v\in DS_k\backslash\{\bm{0}\}}\frac{H[v]}{\nor D^{-1}v \nor_{\R^{N}}^2}.
$$
Here we have
$$
\frac{\nor v \nor_{\R^{N}}^2}{\max_{1\leq j\leq N} d_j^2}\leq \nor D^{-1}v \nor_{\R^{N}}^2\leq \frac{\nor v \nor_{\R^{N}}^2}{\min_{1\leq j\leq N} d_j^2}.
$$
Therefore, it holds that
$$
\min_{1\leq j\leq N} d_j^2\max_{v\in DS_k\backslash\{\bm{0}\}}\frac{H[v]}{\nor v \nor_{\R^{N}}^2}\leq\max_{v\in DS_k\backslash\{\bm{0}\}}\frac{H[v]}{\nor D^{-1}v \nor_{\R^{N}}^2}\leq\max_{1\leq j\leq N} d_j^2\max_{v\in DS_k\backslash\{\bm{0}\}}\frac{H[v]}{\nor v \nor_{\R^{N}}^2},
$$
provided that $H[v]$ take a non-negative value, that is, the $k$-th eigenvalue $\Lambda^k\geq 0$. Consequently, it holds that
\begin{eqnarray*}
\min_{1\leq j\leq N} d_j^2\min_{S_k}\max_{v\in S_k\backslash\{\bm{0}\}}\frac{H[v]}{\nor v \nor_{\R^{N}}^2} & \leq & \min_{S_k}\max_{v\in DS_k\backslash\{\bm{0}\}}\frac{H[v]}{\nor D^{-1}v \nor_{\R^{N}}^2} \\
& \leq & \max_{1\leq j\leq N} d_j^2\min_{S_k}\max_{v\in S_k\backslash\{\bm{0}\}}\frac{H[v]}{\nor v \nor_{\R^{N}}^2},
\end{eqnarray*}
which implies
$$
\Lambda^k\min_{1\leq j\leq N} d_j^2\leq\til{\Lambda}^k\leq\Lambda^k\max_{1\leq j\leq N} d_j^2,
$$
when $\Lambda^k\geq 0$. Similarly, we obtain
$$
\Lambda^k\min_{1\leq j\leq N} d_j^2\geq\til{\Lambda}^k\geq\Lambda^k\max_{1\leq j\leq N} d_j^2
$$
if $\Lambda^k<0$.
\end{proof}

\bigskip
\footnotesize
\noindent\textit{Acknowledgments.}
The first two authors are supported by M.I.U.R., project
``Variational methods and nonlinear differential equations".
The third and the fourth authors are supported by Grant-in-Aid for Scientific Research (C) (No.22540231) and (B) (No. 20340034), Japan Society for the Promotion of Science.

\end{document}